\newtheorem{theorem}{Theorem}[section]
\newtheorem{lemma}[theorem]{Lemma}
\theoremstyle{definition}
\newtheorem{definition}[theorem]{Definition}
\newtheorem{example}[theorem]{Example}
\newtheorem{proposition}[theorem]{Proposition}
\theoremstyle{remark}
\newtheorem{remark}[theorem]{Remark}
\numberwithin{equation}{section}
\begin{document}

\title[]{A new kind of Lax-Oleinik type operator with parameters for time-periodic positive definite Lagrangian systems}

\author[K. Wang and J. Yan]{Kaizhi Wang$^{1,\,2}$ and Jun Yan$^{1}$}
\address{$^{1}$ School of Mathematical Sciences
and Key Lab of Mathematics for Nonlinear Science, Fudan
University, Shanghai 200433, China}

\address{$^{2}$ College of Mathematics,
Jilin University,  Changchun 130012, China}

\email{kaizhiwang@163.com; \ yanjun@fudan.edu.cn}
%\thanks{The author was supported by graduate innovation lab of Jilin University}

\subjclass[2000]{37J50}
\date{December 2010}
\keywords{weak KAM theory; new Lax-Oleinik type operators;
time-periodic Lagrangians; Hamilton-Jacobi equations.}

\begin{abstract}
In this paper we introduce a new kind of Lax-Oleinik type operator
with parameters associated with positive definite Lagrangian
systems for both the time-periodic case and the time-independent
case. On one hand, the new family of Lax-Oleinik type operators
with an arbitrary $u\in C(M,\mathbb{R}^1)$ as initial condition
converges to a backward weak KAM solution in the time-periodic
case, while it was shown by Fathi and Mather that there is no such
convergence of the Lax-Oleinik semigroup. On the other hand, the
new family of Lax-Oleinik type operators with an arbitrary $u\in
C(M,\mathbb{R}^1)$ as initial condition converges to a backward
weak KAM solution faster than the Lax-Oleinik semigroup in the
time-independent case.
\end{abstract}

\maketitle

\section{Introduction}
Let $M$ be a compact and connected smooth manifold. Denote by $TM$
its tangent bundle and $T^*M$ the cotangent one. Consider a
$C^\infty$ Lagrangian $L: TM\times\mathbb{R}^1\to\mathbb{R}^1$,
$(x,v,t)\mapsto L(x,v,t)$. We suppose that $L$ satisfies the
following conditions introduced by Mather \cite{Mat91}:

\vskip0.2cm
\begin{itemize}
    \item [(H1)] \textbf{Periodicity}. $L$ is 1-periodic in the
                 $\mathbb{R}^1$ factor, i.e.,
                 $L(x,v,t)=L(x,v,t+1)$ for all $(x,v,t)\in TM\times\mathbb{R}^1$.
    \item [(H2)] \textbf{Positive Definiteness}. For each $x\in M$ and each
                 $t\in\mathbb{R}^1$, the restriction of $L$ to $T_xM\times
                 t$ is strictly convex in the sense that its
                 Hessian second derivative is everywhere positive
                 definite.
    \item [(H3)] \textbf{Superlinear Growth}.
                 $\lim_{\|v\|_x\to+\infty}\frac{L(x,v,t)}{\|v\|_x}=+\infty$
                 uniformly on $x\in M$, $t\in\mathbb{R}^1$,
                 where $\|\cdot\|_x$ denotes the norm induced by a
                 Riemannian metric on $T_xM$. By the compactness
                 of $M$, this condition is
                 independent of the choice of the Riemannian
                 metric.
    \item [(H4)] \textbf{Completeness of the Euler-Lagrange Flow}.
                 The maximal solutions of the Euler-Lagrange
                 equation, which in local coordinates is:
                 \[
                 \frac{d}{dt}\frac{\partial L}{\partial
                 v}(x,\dot{x},t)=\frac{\partial L}{\partial
                 x}(x,\dot{x},t),
                 \]
                 are defined on all of $\mathbb{R}^1$.
\end{itemize}

\vskip0.2cm

The Euler-Lagrange equation is a second order periodic
differential equation on $M$ and generates a flow of
diffeomorphisms $\phi^L_t:TM\times\mathbb{S}^1\to
TM\times\mathbb{S}^1$, $t\in\mathbb{R}^1$, where $\mathbb{S}^1$
denotes the circle $\mathbb{R}^1/\mathbb{Z}$, defined by

\[
\phi^L_t(x_0,v_0,t_0)=(x(t+t_0),\dot{x}(t+t_0),(t+t_0)\
\mathrm{mod}\ 1),
\]
where $x:\mathbb{R}^1\to M$ is the maximal solution of the
Euler-Lagrange equation with initial conditions $x(t_0)=x_0$,
$\dot{x}(t_0)=v_0$. The completeness and periodicity conditions
grant that this correctly defines a flow on
$TM\times\mathbb{S}^1$.

We can associate with $L$ a Hamiltonian, as a function on
$T^*M\times\mathbb{R}^1$: $H(x,p,t)=\sup_{v\in T_xM}\{\langle
p,v\rangle_x-L(x,v,t)\}$, where $\langle \cdot,\cdot\rangle_x$
represents the canonical pairing between the tangent and cotangent
space. The corresponding Hamilton-Jacobi equation is

\begin{align}\label{1-1}
u_t+H(x,u_x,t)=c(L),
\end{align}
where $c(L)$ is the Ma$\mathrm{\tilde{n}}\mathrm{\acute{e}}$
critical value \cite{Man97} of the Lagrangian $L$. In terms of
Mather's $\alpha$ function $c(L)=\alpha(0)$.

In this paper we also consider time-independent Lagrangians on
$M$. Let $L_a:TM\to\mathbb{R}^1$, $(x,v)\mapsto L_a(x,v)$ be a
$C^2$ Lagrangian satisfying the following two conditions:

\vskip0.2cm
\begin{itemize}
      \item [(H2')] \textbf{Positive Definiteness}. For each $(x,v)\in TM$,
                    the Hessian second derivative $\frac{\partial^2L_a}{\partial v^2}(x,v)$
                    is positive definite.
      \item [(H3')] \textbf{Superlinear Growth}.
                    $\lim_{\|v\|_x\to+\infty}\frac{L_a(x,v)}{\|v\|_x}=+\infty$ uniformly on $x\in
                    M$.
\end{itemize}
\vskip0.2cm

Since $M$ is compact, the Euler-Lagrange flow $\phi^{L_a}_t$ is
complete under the assumptions (H2') and (H3').

For $x\in M$, $p\in T_x^*M$, the conjugated Hamiltonian $H_a$ of
$L_a$ is defined by: $H_a(x,p)=\sup_{v\in T_xM}\{\langle
p,v\rangle_x-L(x,v)\}$. The corresponding Hamilton-Jacobi equation
is

\begin{align}\label{1-2}
H_a(x,u_x)=c(L_a).
\end{align}

The Lax-Oleinik semigroup (hereinafter referred to as L-O
semigroup) \cite{Hop,Lax,Ole} is well known in several domains,
such as PDE, Optimization and Control Theory, Calculus of
Variations and Dynamical Systems. In particular, it plays an
essential role in the weak KAM theory (see
\cite{Fat1,Fat97b,Fat98a,Fat4} or \cite{Fat-b}).

Let us first recall the definitions of the L-O semigroups
associated with $L_a$ (time-independent case) and $L$
(time-periodic case), respectively. For each $u\in
C(M,\mathbb{R}^1)$ and each $t\geq0$, let

\begin{align}\label{1-3}
T^a_tu(x)=\inf_\gamma\Big\{u(\gamma(0))+\int_0^tL_a(\gamma(s),\dot{\gamma}(s))ds\Big\}
\end{align}
for all $x\in M$, and

\begin{align}\label{1-4}
T_tu(x)=\inf_\gamma\Big\{u(\gamma(0))+\int_0^tL(\gamma(s),\dot{\gamma}(s),s)ds\Big\}
\end{align}
for all $x\in M$, where the infimums are taken among the
continuous and piecewise $C^1$ paths $\gamma:[0,t]\to M$ with
$\gamma(t)=x$. In view of (\ref{1-3}) and (\ref{1-4}), for each
$t\geq 0$, $T_t^a$ and $T_t$ are operators from
$C(M,\mathbb{R}^1)$ to itself. It is not difficult to check that
$\{T^a_t\}_{t\geq 0}$ and $\{T_n\}_{n\in\mathbb{N}}$ are
one-parameter semigroups of operators, which means $T^a_0=I$ (unit
operator), $T^a_{t+s}=T^a_t\circ T^a_s$, $\forall t,\ s\geq 0$,
and $T_0=I$, $T_{n+m}=T_n\circ T_m$, $\forall n,\ m\in\mathbb{N}$,
where $\mathbb{N}=\{0,1,2,\cdots\}$. $\{T^a_t\}_{t\geq 0}$ and
$\{T_n\}_{n\in\mathbb{N}}$ are called the L-O semigroup associated
with $L_a$ and $L$, respectively.

The L-O semigroup is used to obtain backward weak KAM solutions
(viscosity solutions) first by Lions, Papanicolaou and Varadhan
\cite{LPV} on the $n$-torus $\mathbb{T}^n$ and later by Fathi
\cite{Fat1} for arbitrary compact manifolds. More precisely, for
the time-independent case, Fathi \cite{Fat1} proves that there
exists a unique $c_0\in \mathbb{R}^1$ ($c_0=c(L_a)$), such that
the semigroup $\hat{T}^a_t:u\to T^a_tu+c_0t$, $t\geq 0$ has a
fixed point $u^*\in C(M,\mathbb{R}^1)$ and that any fixed point is
a backward weak KAM solution of (\ref{1-2}). In the particular
case $M=\mathbb{T}^n$, the backward weak KAM solution obtained by
Fathi is just the viscosity solution obtained earlier by Lions,
Papanicolaou and Varadhan. Moreover, Fathi points out that the
above results for the time-independent case are still correct for
the time-periodic dependent case \cite{Fat-b}. Furthermore, for
the time-independent case, he shows in \cite{Fat4} that for every
$u\in C(M,\mathbb{R}^1)$, the uniform limit
$\lim_{t\to+\infty}\hat{T}^a_tu=\bar{u}$ exists and is a fixed
point of $\{\hat{T}^a_t\}_{t\geq 0}$, i.e., $\bar{u}$ is a
backward weak KAM solution of (\ref{1-2}). In the same paper Fathi
raises the question as to whether the analogous result holds in
the time-periodic case. This would be the convergence of
$T_nu+nc(L)$, $\forall u\in C(M,\mathbb{R}^1)$, as $n\to+\infty$,
$n\in\mathbb{N}$. In view of the relation between $T_n$ and the
Peierls barrier $h$ (see \cite{Mat93} or \cite{Fat5,Ber,Con}), if
the liminf in the definition of the Peierls barrier is not a
limit, then the L-O semigroup in the time-periodic case does not
converge. Fathi and Mather \cite{Fat5} construct examples where
the liminf in the definition of the Peierls barrier is not a
limit, thus answering the above question negatively.

The main aim of the present paper is to introduce a new kind of
Lax-Oleinik type operator with parameters (hereinafter referred to
as new L-O operator) associated with positive definite Lagrangian
systems for both the time-periodic case and the time-independent
case. The new L-O operator associated with the time-independent
Lagrangian is a special case of the one associated with the
time-periodic Lagrangian. We show that

\begin{itemize}
    \item   for the time-periodic Lagrangian $L$, the new family of
            L-O operators with an arbitrary continuous function on $M$ as initial
            condition converges to a backward weak KAM solution of
            (\ref{1-1}).
    \item   for the time-independent Lagrangian $L_a$, the new family of
            L-O operators is a one-parameter semigroup of
            operators, and the new L-O semigroup with an arbitrary continuous function on $M$ as initial
            condition converges to a backward weak KAM solution of
            (\ref{1-2}) faster than the L-O semigroup.
\end{itemize}

Without loss of generality, we will from now on always assume
$c(L_a)=c(L)=0$. We view the unit circle $\mathbb{S}^1$ as the
fundmental domain in $\mathbb{R}^1: [0,1]$ with two endpoints
identified.

We are now in a position to introduce the new L-O operators
mentioned above associated with $L$ and $L_a$, respectively.

\subsection{Time-periodic case}
For each $n\in\mathbb{N}$ and each $u\in C(M,\mathbb{R}^1)$, let

\[
\tilde{T}_nu(x)=\inf_{k\in\mathbb{N} \atop n\leq k\leq
2n}\inf_{\gamma}\Big\{u(\gamma(0))+\int_{0}^k
L(\gamma(s),\dot{\gamma}(s),s)ds\Big\}
\]
for all $x\in M$, where the second infimum is taken among the
continuous and piecewise $C^1$ paths $\gamma:[0,k]\rightarrow M$
with $\gamma(k)=x$. One can easily check that for each
$n\in\mathbb{N}$, $\tilde{T}_n$ is an operator from
$C(M,\mathbb{R}^1)$ to itself, and that
$\{\tilde{T}_n\}_{n\in\mathbb{N}}$ is a semigroup of operators.

\begin{definition}\label{def1}
For each $\tau\in[0,1]$ and each $n\in \mathbb{N}$, let
$\tilde{T}_n^\tau=T_\tau\circ\tilde{T}_n$. Then for each $u\in
C(M,\mathbb{R}^1)$,

\[
\tilde{T}_n^\tau u(x)=(T_\tau\circ\tilde{T}_nu)(x)=
\inf_{k\in\mathbb{N} \atop n\leq k\leq
2n}\inf_{\gamma}\Big\{u(\gamma(0))+\int_{0}^{\tau+k}
L(\gamma(s),\dot{\gamma}(s),s)ds\Big\}
\]
for all $x\in M$, where the second infimum is taken among the
continuous and piecewise $C^1$ paths $\gamma:[0,\tau+k]\rightarrow
M$ with $\gamma(\tau+k)=x$.
\end{definition}

For each $\tau\in[0,1]$ and each $n\in \mathbb{N}$, since
$\tilde{T}_n^\tau=T_\tau\circ\tilde{T}_n$ and $T_\tau$,
$\tilde{T}_n$ are both operators from $C(M,\mathbb{R}^1)$ to
itself, then $\tilde{T}_n^\tau$ is an operator from
$C(M,\mathbb{R}^1)$ to itself too. We also provide an alternative
direct proof of the continuity of $\tilde{T}_n^\tau u$ for each
$u\in C(M,\mathbb{R}^1)$ in Section 3. We call $\tilde{T}_n^\tau$
{\em the new L-O operator associated with $L$}. Note that for
$\tau\in[0,1]\setminus\{0\}$,
$\{\tilde{T}_n^\tau\}_{n\in\mathbb{N}}$ is not a semigroup of
operators, while in the particular case $\tau=0$,
$\{\tilde{T}_n^0\}_{n\in\mathbb{N}}=\{\tilde{T}_n\}_{n\in\mathbb{N}}$
is a semigroup of operators as mentioned above. For each
$n\in\mathbb{N}$ and each $u\in C(M,\mathbb{R}^1)$, let
$U^u_n(x,\tau)=\tilde{T}_n^\tau u(x)$ for all $(x,\tau)\in
M\times[0,1]$. Then $U^u_n$ is a continuous function on
$M\times[0,1]$.

Now we come to the main result:

\begin{theorem}\label{th1}
For each $u\in C(M,\mathbb{R}^1)$, the uniform limit
$\bar{u}=\lim_{n\to+\infty}U^u_n$ exists and

\[
\bar{u}(x,\tau)=\inf_{y\in M}\big(u(y)+h_{0,\tau}(y,x)\big)
\]
for all $(x,\tau)\in M\times\mathbb{S}^1$. Furthermore, $\bar{u}$
is a backward weak KAM solution of the Hamilton-Jacobi equation

\begin{align}\label{1-5}
u_\tau+H(x,u_x,\tau)=0.
\end{align}
\end{theorem}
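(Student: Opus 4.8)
The plan is to establish the existence of the uniform limit and identify it via the Peierls barrier $h_{0,\tau}$, and then to verify that the resulting function is a backward weak KAM solution of \eqref{1-5}. I would organize the argument in three stages.

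First, for the pointwise/uniform convergence. By Definition \ref{def1}, $\tilde{T}_n^\tau u(x)$ is an infimum over $k$ with $n\le k\le 2n$ and over paths $\gamma:[0,\tau+k]\to M$ ending at $x$ of the action $u(\gamma(0))+\int_0^{\tau+k}L$. Splitting such a path at time $\tau+k$ minus an integer and using the definition of $h_{0,\tau}(y,x)$ as the Peierls barrier (the $\liminf$ over integer times $N$ of the minimal action from $(y,0)$ to $(x,\tau)$ in time $\tau+N$), one gets the lower bound $\tilde{T}_n^\tau u(x)\ge \inf_{y\in M}(u(y)+h_{0,\tau}^{(n)}(y,x))$ for an appropriate finite-horizon barrier, and passing to the limit (using that the minimal action between fixed endpoints is bounded above and below uniformly, hence the relevant $\liminf$ over $n\le k\le 2n$ stabilizes) yields $\liminf_{n\to\infty}\tilde{T}_n^\tau u(x)\ge \inf_{y}(u(y)+h_{0,\tau}(y,x))$. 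For the reverse inequality, given $y$ and $\varepsilon>0$ choose, by the definition of $h_{0,\tau}$ as a $\liminf$, infinitely many integers $N$ with action from $(y,0)$ to $(x,\tau)$ close to $h_{0,\tau}(y,x)$; for each large $n$ pick such an $N$ in the window $[n,2n]$ (this is where the doubled window $n\le k\le 2n$ is essential — it guarantees the $\liminf$-realizing times are caught), giving $\tilde{T}_n^\tau u(x)\le u(y)+h_{0,\tau}(y,x)+\varepsilon$. Hence the pointwise limit is $\bar u(x,\tau)=\inf_{y\in M}(u(y)+h_{0,\tau}(y,x))$. Uniformity then follows from equi-Lipschitz bounds: the functions $U^u_n$ are equi-Lipschitz in $(x,\tau)$ (standard a priori estimates on minimizers, as in the continuity argument promised for Section 3), so pointwise convergence on the compact $M\times\mathbb{S}^1$ upgrades to uniform convergence via Arzel\`a--Ascoli, and the limit is the claimed $\bar u$.

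Second, I would record the basic properties of $\bar u$ needed for weak KAM: $\bar u(\cdot,\tau)$ is Lipschitz with the same constant as the $U^u_n$; the function $x\mapsto h_{0,\tau}(y,x)$ and more generally $h_{s,\tau}$ satisfy the triangle-type inequality $h_{s,\tau}(y,z)\le h_{s,\sigma}(y,w)+h_{\sigma,\tau}(w,z)$, and domination $h_{s,t}(y,z)\le$ action of any curve, together with the fact that from $\bar u$ one can recover $\bar u$ at later times by the action (i.e. $\bar u$ is dominated by $L$ and, along calibrated curves, equality holds). Concretely, one shows $\bar u(x,t)=\inf\{\bar u(\gamma(s),s)+\int_s^t L(\gamma,\dot\gamma,\sigma)\,d\sigma\}$ over curves $\gamma$ ending at $x$ at time $t$, for all $s<t$ — this is the semigroup/evolution identity for $\bar u$, which transfers from the corresponding identity for the finite-horizon objects defining $\tilde T_n^\tau$ by passing to the limit.

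Third, being a backward weak KAM solution of \eqref{1-5} means exactly: (i) $\bar u$ is dominated by $L-0$, i.e. for every piecewise $C^1$ curve $\gamma:[s,t]\to M$, $\bar u(\gamma(t),t)-\bar u(\gamma(s),s)\le\int_s^t L(\gamma,\dot\gamma,\sigma)\,d\sigma$; and (ii) for every $(x,t)$ there is a curve $\gamma:(-\infty,t]\to M$ with $\gamma(t)=x$ realizing equality on every subinterval (a calibrated curve). Domination (i) is immediate from the evolution identity of the second stage. For (ii), fix $(x,t)$; for each large $n$ the infimum defining $U^u_n$ at $(x,t)$ is attained (by Tonelli's theorem, lower semicontinuity plus superlinearity (H3)) along a minimizer $\gamma_n$ defined on a long interval of length $\ge n$; the a priori compactness of minimizers gives a subsequential limit curve $\gamma:(-\infty,t]\to M$, and the convergence of actions together with $\bar u=\lim U^u_n$ shows $\gamma$ is calibrated for $\bar u$. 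Finally, by the standard equivalence in weak KAM theory (Fathi's theorem, valid in the time-periodic setting as noted in the excerpt), a Lipschitz function that is dominated and admits backward calibrated curves through every point is precisely a backward weak KAM solution of \eqref{1-5}; one may also verify directly that $\bar u$ is a viscosity solution of \eqref{1-5} from these two properties.

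The main obstacle is the first stage — proving that the $\liminf$ defining the Peierls barrier is actually captured by the finite windows $[n,2n]$, uniformly in the endpoints, so that the limit of $\tilde T_n^\tau u$ exists rather than merely its $\liminf$ and $\limsup$ being controlled. This is exactly the point where Fathi--Mather's counterexample shows the plain L-O semigroup $T_n$ fails, and the doubled time window $n\le k\le 2n$ is the device that repairs it; making the argument that the infimum over this window converges (using, e.g., that $h_{0,\tau}(y,x)=\liminf_k a_k(y,x)$ with the $a_k$ satisfying a superadditivity-up-to-bounded-error property, so that $\min_{n\le k\le 2n} a_k \to \liminf_k a_k$) is the crux and deserves the most care.
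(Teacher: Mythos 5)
There is a genuine gap at the crux you yourself identify, and the device you propose to close it does not work. In the reverse inequality you write: ``choose, by the definition of $h_{0,\tau}$ as a $\liminf$, infinitely many integers $N$ with action close to $h_{0,\tau}(y,x)$; for each large $n$ pick such an $N$ in the window $[n,2n]$ --- this is where the doubled window is essential, it guarantees the $\liminf$-realizing times are caught.'' This is false: an infinite subsequence $k_i\to+\infty$ realizing the $\liminf$ need not meet every window $[n,2n]$. If, say, $k_i=3^i$, then for $n=3^i+1$ one has $2n<3^{i+1}$ for all large $i$, so the window contains no $k_j$ at all. The doubled window by itself captures nothing; the Fathi--Mather counterexample shows exactly that $F_{0,\tau+k}(y,x)$ can oscillate persistently in $k$, so you cannot conclude $\min_{n\le k\le 2n}F_{0,\tau+k}\to\liminf_k F_{0,\tau+k}$ from the definition of $\liminf$ alone, nor from an unproved ``superadditivity-up-to-bounded-error'' of the $a_k$ (the relevant quantities are closer to subadditive than superadditive, and neither property is established or suffices as stated).

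What actually closes this gap in the paper (Proposition \ref{pr3-4}, Step 1) is a dynamical construction: fix one good time $k_I$ with $F_{\tau,\tau'+k_I}(x,x')$ within $\varepsilon$ of $h_{\tau,\tau'}(x,x')$, note (via a priori compactness) that the Tonelli minimizer $\gamma_{k_I}$ passes within $\varepsilon$ of the Mather set $\tilde{\mathcal M}_0$, and then use the ergodicity of a minimal measure to find, for every large $n$, an orbit segment lying in $\tilde{\mathcal M}_0$ of integer length $m$ with $n\le k_I+m\le 2n$ whose action is $O(\varepsilon)$ (because $h_{\tau,\tau}(e_1,e_1)=0$ on the projected Mather set). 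Splicing this nearly-free segment into the middle of $\gamma_{k_I}$, and controlling the gluing errors by the uniform Lipschitz constants of $F_{t,t'}$ and $h_{\tau,\tau}$, produces a competitor of integer length in $[n,2n]$ with action $\le h_{\tau,\tau'}(x,x')+C\varepsilon$. The role of the window $[n,2n]$ is only to leave enough room for the inserted segment, not to trap $\liminf$-realizing times. Without some such mechanism for padding a single good minimizer to an arbitrary prescribed integer length at negligible cost, your Stage 1 does not go through. Your Stage 2--3 outline (equi-Lipschitz bounds for uniformity, then domination plus backward calibrated curves obtained as limits of minimizers) is workable and is a legitimate alternative to the paper's shortcut, which instead writes $\bar u(\cdot,\tau)=\inf_y\bigl(u(y)+h_{0,\tau}(y,\cdot)\bigr)$ and invokes that each $h_{0,\tau}(y,\cdot)$ is a backward weak KAM solution and that infima of such solutions remain solutions; but this is moot until the convergence itself is established.
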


\begin{remark}
For the definition of the (extended) Peierls barrier $h$, see
\cite{Mat93} or \cite{Fat5,Ber,Con}. For completeness' sake, we
recall the definition in Section 3.
\end{remark}

In addition, we discuss the relation among uniform limits
$\lim_{n\to+\infty}U^u_n$, backward weak KAM solutions and
viscosity solutions of (\ref{1-5}). Let $\bar{u}\in
C(M\times\mathbb{S}^1,\mathbb{R}^1)$. Then the following three
statements are equivalent.

\begin{itemize}
    \item There exists $u\in C(M,\mathbb{R}^1)$ such that the uniform limit
           $\lim_{n\to+\infty}U^u_n=\bar{u}$.
    \item $\bar{u}$ is a backward weak KAM solution of
          (\ref{1-5}).
    \item $\bar{u}$ is a viscosity solution of (\ref{1-5}).
\end{itemize}
See Propositions \ref{pr3-5}, \ref{pr3-6} for details.

\vskip0.2cm

\subsection{Time-independent case}
Just like the time-periodic case, for each $n\in\mathbb{N}$ and
each $u\in C(M,\mathbb{R}^1)$, let

\[
\tilde{T}_n^au(x)=\inf_{k\in\mathbb{N} \atop n\leq k\leq
2n}\inf_{\gamma}\Big\{u(\gamma(0))+\int_{0}^k
L_a(\gamma(s),\dot{\gamma}(s))ds\Big\}
\]
for all $x\in M$, where the second infimum is taken among the
continuous and piecewise $C^1$ paths $\gamma:[0,k]\rightarrow M$
with $\gamma(k)=x$. For each $n\in\mathbb{N}$, $\tilde{T}_n^a$ is
an operator from $C(M,\mathbb{R}^1)$ to itself, and
$\{\tilde{T}_n^a\}_{n\in\mathbb{N}}$ is a semigroup of operators.

For each $\tau\in[0,1]$ and each $n\in \mathbb{N}$, let
$\tilde{T}_n^{a,\tau}=T_\tau^a\circ\tilde{T}_n^a$. Then for each
$u\in C(M,\mathbb{R}^1)$,

\[
\tilde{T}_n^{a,\tau} u(x)=(T_\tau^a\circ\tilde{T}_n^au)(x)=
\inf_{k\in\mathbb{N} \atop n\leq k\leq
2n}\inf_{\gamma}\Big\{u(\gamma(0))+\int_{0}^{\tau+k}
L_a(\gamma(s),\dot{\gamma}(s))ds\Big\}
\]
for all $x\in M$, where the second infimum is taken among the
continuous and piecewise $C^1$ paths $\gamma:[0,\tau+k]\rightarrow
M$ with $\gamma(\tau+k)=x$. For each $\tau\in[0,1]$ and each $n\in
\mathbb{N}$, $\tilde{T}_n^{a,\tau}$ is an operator from
$C(M,\mathbb{R}^1)$ to itself. Furthermore, it is not difficult to
show that for each $\tau\in[0,1]$ and each $u\in
C(M,\mathbb{R}^1)$, the uniform limit
$\lim_{n\to+\infty}\tilde{T}_n^{a,\tau}u$ exists and
$\lim_{n\to+\infty}\tilde{T}_n^{a,\tau}u=\lim_{n\to+\infty}T_n^au=\bar{u}$,
which is a backward weak KAM solution of (\ref{1-2}), see Remark
\ref{re4-1}. It means that the parameter $\tau$ does not effect
the convergence of $\{\tilde{T}_n^{a,\tau}u\}_{n\in\mathbb{N}}$.
Therefore, without any loss of generality, we take $\tau=0$ and
thus consider the operator $\tilde{T}_n^{a,0}=\tilde{T}_n^a$. In
order to compare the new family of L-O operators to the {\em full}
L-O semigroup $\{T^a_t\}_{t\geq 0}$, it is convenient to define
{\em the new L-O operator associated with $L_a$} as follows.

\begin{definition}\label{def2}
For each $u\in C(M,\mathbb{R}^1)$ and each $t\geq0$, let
\[
\tilde{T}^a_tu(x)=\inf_{t\leq \sigma\leq
2t}\inf_{\gamma}\Big\{u(\gamma(0))+\int_0^\sigma
L_a(\gamma(s),\dot{\gamma}(s))ds\Big\}
\]
for all $x\in M$, where the second infimum is taken among the
continuous and piecewise $C^1$ paths $\gamma:[0,\sigma]\rightarrow
M$ with $\gamma(\sigma)=x$.
\end{definition}

It is easy to check that $\{\tilde{T}^a_t\}_{t\geq
0}:C(M,\mathbb{R}^1)\to C(M,\mathbb{R}^1)$ is a one-parameter
semigroup of operators. We call it {\em the new L-O semigroup
associated with $L_a$}. We show that $u\in C(M,\mathbb{R}^1)$ is a
fixed point of $\{\tilde{T}^a_t\}_{t\geq 0}$ if and only if it is
a fixed point of $\{T^a_t\}_{t\geq 0}$, and that for each $u\in
C(M,\mathbb{R}^1)$, the uniform limit
$\lim_{t\to+\infty}\tilde{T}^a_tu=\lim_{t\to+\infty}T^a_tu=\bar{u}$.
For more properties of $\tilde{T}^a_t$ we refer to Section 4.

How fast does the L-O semigroup converge? It is an interesting
question which is well worth discussing. We believe that there is
a deep relation between dynamical properties of Mather sets (or
Aubry sets) and the rate of convergence of the L-O semigroup. To
the best of our knowledge there are now two relative results: In
\cite{Itu}, Iturriaga and S$\mathrm{\acute{a}}$nchez-Morgado prove
that if the Aubry set consists in a finite number of hyperbolic
periodic orbits or hyperbolic fixed points, the L-O semigroup
converges exponentially. Recently, in \cite{Wan} the authors deal
with the rate of convergence problem when the Mather set consists
of degenerate fixed points. More precisely, consider the standard
Lagrangian in classical mechanics
$L^0_a(x,v)=\frac{1}{2}v^2+U(x)$, $x\in \mathbb{S}^1$, $v\in
\mathbb{R}^1$, where $U$ is a real analytic function on
$\mathbb{S}^1$ and has a unique global minimum point $x_0$.
Without loss of generality, one may assume $x_0=0$, $U(0)=0$. Then
$c(L^0_a)=0$ and $\tilde{\mathcal{M}}_0=\{(0,0)\}$, where
$\tilde{\mathcal{M}}_0$ is the Mather set with cohomology class 0
\cite{Mat91}. An upper bound estimate of the rate of convergence
of the L-O semigroup is provided in \cite{Wan} under the
assumption that $\{(0,0)\}$ is a degenerate fixed point: for every
$u\in C(\mathbb{S}^1,\mathbb{R}^1)$, there exists a constant $C>0$
such that

\[
\|T^a_tu-\bar{u}\|_\infty\leq\frac{C}{\sqrt[k-1]{t}}, \quad
\forall t>0,
\]
where $k\in \mathbb{N}$, $k\geq 2$ depends only on the degree of
degeneracy of the minimum point of the potential function $U$.

Naturally, we also care the problem of the rate of convergence of
the new L-O semigroup. We compare the rate of convergence of the
new L-O semigroup to the rate for the L-O semigroup as follows.
First, we show that for each $u\in C(M,\mathbb{R}^1)$,
$\|\tilde{T}^a_tu-\bar{u}\|_\infty\leq\|T^a_tu-\bar{u}\|_\infty$,
$\forall t\geq 0$. It means that the new L-O semigroup converges
faster than the L-O semigroup.

Then, in particular, we consider a class of $C^2$ positive
definite and superlinear Lagrangians on $\mathbb{T}^n$

\begin{align}\label{1-6}
L^1_a(x,v)=\frac{1}{2}\langle
A(x)(v-\omega),(v-\omega)\rangle+f(x,v-\omega), \quad x\in
\mathbb{T}^n,\ v\in\mathbb{R}^n,
\end{align}
where $A(x)$ is an $n\times n$ matrix, $\omega\in\mathbb{S}^{n-1}$
is a given vector, and $f(x,v-\omega)=O(\|v-\omega\|^3)$ as
$v-\omega\rightarrow 0$. It is clear that $c(L^1_a)=0$ and
$\tilde{\mathcal{M}}_0=\tilde{\mathcal{A}}_0=\tilde{\mathcal{N}}_0=\cup_{x\in\mathbb{T}^n}(x,\omega)$,
which is a quasi-periodic invariant torus with frequency vector
$\omega$ of the Euler-Lagrange flow associated to $L^1_a$, where
$\tilde{\mathcal{A}}_0$ and $\tilde{\mathcal{N}}_0$ are the Aubry
set and the Ma$\mathrm{\tilde{n}}\mathrm{\acute{e}}$ set with
cohomology class 0 \cite{Mat93}, respectively. For the Lagrangian
system (\ref{1-6}), we obtain the following two results on the
rates of convergence of the L-O semigroup and the new L-O
semigroup, respectively.

\begin{theorem}\label{th2}
For each $u\in C(\mathbb{T}^n,\mathbb{R}^1)$, there is a constant
$K>0$ such that

\[
\|T^a_tu-\bar{u}\|_\infty\leq\frac{K}{t}, \quad \forall t>0,
\]
where $K$ depends only on $n$ and $u$.
\end{theorem}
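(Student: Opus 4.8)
The plan is to prove the $O(1/t)$ convergence rate for the L-O semigroup associated to $L^1_a$ in \eqref{1-6} by exploiting the explicit structure of the Lagrangian near the invariant torus. After normalizing so that $\omega$ corresponds to a translation, I would first reduce to understanding $T^a_t u - \bar u$ where $\bar u$ is the (unique up to constants) weak KAM solution; since $\tilde{\mathcal M}_0=\tilde{\mathcal A}_0$ is the full torus $\{(x,\omega)\}$, the Aubry set is all of $\mathbb{T}^n$, the Peierls barrier $h$ vanishes on the diagonal, and $\bar u$ is actually constant (equal to $\min u$ shifted appropriately, or more precisely $\bar u(x)=\inf_{y}(u(y)+h(y,x))$ with $h\equiv$ something explicit). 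I would make this precise using the variational formula \eqref{1-3}: $T^a_t u(x)=\inf_\gamma\{u(\gamma(0))+\int_0^t L^1_a(\gamma,\dot\gamma)\,ds\}$, and compare the value along near-optimal curves to the value along the ``drift'' curve $\gamma(s)=x-(t-s)\omega$ (mod $\mathbb Z^n$), which has action exactly $\int_0^t f(\gamma,0)\,ds=0$ since $f(x,0)=0$ by the $O(\|v-\omega\|^3)$ hypothesis. This gives the easy upper bound $T^a_t u(x)\le u(x-t\omega)$.

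Next, I would establish the matching lower bound, which is where the $1/t$ rate enters. The key estimate is coercivity of the action: because $A(x)$ is positive definite (uniformly, by compactness of $\mathbb{T}^n$) and $f(x,w)=O(\|w\|^3)$, there is $c_0>0$ with $L^1_a(x,v)\ge c_0\|v-\omega\|^2$ for $\|v-\omega\|$ small, and $L^1_a(x,v)\ge c_0\|v-\omega\|$ (superlinearity) for $\|v-\omega\|$ large. Therefore any curve $\gamma:[0,t]\to\mathbb{T}^n$ with $\gamma(t)=x$ satisfies $\int_0^t L^1_a(\gamma,\dot\gamma)\,ds\ge c_0\int_0^t \min(\|\dot\gamma-\omega\|^2,\|\dot\gamma-\omega\|)\,ds$. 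A minimizing curve for $T^a_t u(x)$ must have its ``average drift'' $\frac1t\int_0^t(\dot\gamma-\omega)\,ds$ small — of size $O(1/t)$ — otherwise the action already exceeds the $O(1)$ competitor by more than enough; quantitatively, if the endpoint $\gamma(0)$ differs from $x-t\omega$ by a lattice-reduced displacement $\delta$, then by Cauchy–Schwarz $\int_0^t\|\dot\gamma-\omega\|^2\,ds\ge \|\delta\|^2/t$, so the action is at least $c_0\|\delta\|^2/t$ minus lower-order corrections. Combining: $T^a_t u(x)\ge \inf_{y}\big(u(y)+c_1 d(y,x-t\omega)^2/t\big)$ for a suitable constant, which forces the near-optimal $y$ to be within $O(\sqrt{t\cdot\mathrm{osc}(u)/c_1}\,)$... — but that only gives $O(1/\sqrt t)$ crudely, so I need the sharper observation that for the comparison against the constant $\bar u$ we only need $T^a_t u(x)-\bar u\to 0$ at rate $1/t$, and the relevant quantity is $\inf_y(u(y)-\bar u) + $ (action cost), where $\inf_y(u(y)-\bar u)$ can be made $0$ or negative by choice of $\bar u$, and the positive part of $T^a_tu-\bar u$ comes from the Lipschitz/modulus bound on how $T^a_t u$ at time $t$ relates to the calibrated curve value.

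The cleaner route, which I would actually carry out, is: (i) show $\bar u$ is a constant $\bar c$; (ii) write $T^a_t u(x)-\bar c = \inf_\gamma\{(u(\gamma(0))-\bar c)+\int_0^t L^1_a\,ds\}$ and split into the regime where $\gamma$ stays $C^1$-close to the drift solution and where it does not; (iii) in the close regime, linearize: set $\gamma(s)=\xi(s)+(s-t)\omega+x$ with $\xi$ small, so $\int_0^t L^1_a\,ds = \int_0^t(\tfrac12\langle A(\xi+(s-t)\omega+x)\dot\xi,\dot\xi\rangle+O(\|\dot\xi\|^3))\,ds\ge c_0\int_0^t\|\dot\xi\|^2\,ds \ge c_0\|\xi(t)-\xi(0)\|^2/t = c_0\|x-t\omega-\gamma(0)\|^2/t$ modulo lattice; (iv) optimizing $u(\gamma(0))-\bar c+c_0\|\gamma(0)-(x-t\omega)\|^2/t$ over $\gamma(0)$ and using that $\bar c=\inf_y(u(y)+h(y,\cdot))$ controls $\inf_y(u(y)-\bar c)$ from below by $-\|h\|_\infty$-type bounds, the excess is pinned to $O(1/t)$ after also feeding in the upper bound from paragraph one. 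The main obstacle is step (iii)–(iv): controlling the cubic error $f=O(\|\dot\gamma-\omega\|^3)$ uniformly along curves that need not have small pointwise velocity even if their time-average drift is small — this requires an a priori $C^1$ bound on minimizers (standard from Mather/Fathi theory, minimizers have uniformly bounded speed, so $\|\dot\gamma-\omega\|$ is bounded, hence $\|\dot\gamma-\omega\|^3\le C\|\dot\gamma-\omega\|^2$ and the cubic term is absorbed into the quadratic one with a slightly smaller $c_0$). With that a priori bound in hand the estimate closes and $K$ depends only on $n$ (through the uniform constants $c_0$, the diameter of the fundamental domain) and on $u$ (through $\mathrm{osc}(u)$), as claimed.
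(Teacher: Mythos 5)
There is a genuine gap, and it sits in the direction you treat as ``easy.'' Since $H^1_a(x,0)=0$, the constant function is a critical subsolution and Legendre duality gives $L^1_a\ge 0$; combined with the identification $\bar u\equiv\min_{\mathbb{T}^n}u$ (which itself must be proved --- the paper's Lemma \ref{le4-1}), the lower bound $T^a_tu(x)\ge\bar u(x)$ is immediate, with no need for the coercivity, Cauchy--Schwarz, average-drift or a priori $C^1$ estimates that occupy most of your proposal. The direction that actually carries the $1/t$ rate is the \emph{upper} bound $T^a_tu(x)-\bar u(x)\le K/t$, and your only explicit upper bound, $T^a_tu(x)\le u(x-t\omega)$ from the pure drift competitor, does not deliver it: $u(x-t\omega)-\min u$ can be as large as $\mathrm{osc}(u)$ for all large $t$ (and stays bounded away from $0$ along subsequences unless the orbit of $x$ under the flow $\omega_t$ happens to visit the minimizer, which is the ergodization issue relevant to Theorem \ref{th3}, not Theorem \ref{th2}). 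Your step (iii)--(iv) produces only inequalities of the form $T^a_tu(x)\ge\inf_y(\cdots)$, i.e.\ lower bounds again, so ``feeding in the upper bound from paragraph one'' cannot pin the excess to $O(1/t)$.

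The missing construction is the one the paper uses: take $y$ a minimizer of $u$, set $\Delta=x-(\omega t+y)$ reduced modulo $\mathbb{Z}^n$ so that $\|\Delta\|\le\frac{\sqrt n}{2}$, and use the tilted straight line $\gamma_{\omega'}(s)=\omega' s+y$ with $\omega'=\omega+\Delta/t$, which reaches $x$ at time $t$. Its action is $\int_0^t\big(\tfrac12\langle A(\gamma_{\omega'})\tfrac{\Delta}{t},\tfrac{\Delta}{t}\rangle+f(\gamma_{\omega'},\tfrac{\Delta}{t})\big)ds=O(1/t)$ because the integrand is $O(t^{-2})$ pointwise (the cubic term is harmless here since the perturbation of the velocity is $O(1/t)$ by construction, so no a priori bound on minimizers is needed). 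This gives $T^a_tu(x)\le\min u+C/t=\bar u(x)+C/t$, and the same competitor is what proves $\bar u\equiv\min u$ in the first place. Without this step your argument does not close.
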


We recall the notations for Diophantine vectors: for $\rho>n-1$
and $\alpha>0$, let

\[
\mathcal{D}(\rho,\alpha)=\Big\{\beta\in \mathbb{S}^{n-1}\ |\
|\langle\beta,k\rangle|\geq\frac{\alpha}{|k|^\rho},\ \forall
k\in\mathbb{Z}^n\backslash\{0\}\Big\},
\]
where $|k|=\sum_{i=1}^n|k_i|$.

\begin{theorem}\label{th3}
Given any frequency vector $\omega\in\mathcal{D}(\rho,\alpha)$,
for each $u\in C(\mathbb{T}^n,\mathbb{R}^1)$, there is a constant
$\tilde{K}>0$ such that

\[
\|\tilde{T}^a_tu-\bar{u}\|_\infty\leq
\tilde{K}t^{-(1+\frac{4}{2\rho+n})}, \quad \forall t>0,
\]
where $\tilde{K}$ depends only on $n$, $\rho$, $\alpha$ and $u$.
\end{theorem}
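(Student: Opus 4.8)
The plan is to exploit the comparison inequality $\|\tilde{T}^a_tu-\bar{u}\|_\infty\le\|T^a_tu-\bar{u}\|_\infty$ (already announced in the excerpt) only as a fallback, and instead to build the estimate directly from Definition \ref{def2}. The key point of the new operator is the extra freedom to choose the final time $\sigma\in[t,2t]$. For the Lagrangian (\ref{1-6}) the Mather/Aubry/Ma\~n\'e set is the quasi-periodic torus $\tilde{\mathcal{M}}_0=\cup_x(x,\omega)$, and the calibrated curves on it are straight lines $s\mapsto x_0+s\omega\ (\mathrm{mod}\ \mathbb{Z}^n)$ with zero action (since $c(L^1_a)=0$ and $L^1_a$ vanishes on the torus). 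So for any $x\in\mathbb{T}^n$ I would construct a competitor curve $\gamma:[0,\sigma]\to\mathbb{T}^n$ that first follows the linear flow on the torus for most of the time and then makes a short connecting arc of length $O(1)$ reaching $x$; the action of the linear part is $0$ and the action of the short part is bounded by a constant. The difference $\tilde T^a_tu(x)-\bar u(x)$ is then controlled by how well the endpoint of a torus orbit of adjustable length $\sigma\in[t,2t]$ can be made to land near a prescribed point on $\mathbb{T}^n$ — this is exactly a one-dimensional simultaneous Diophantine approximation question for the flow $\{s\omega\}$.

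Concretely, I would proceed as follows. First, recall (or reprove via the variational formula) that $\bar u(x)=\inf_{y}(u(y)+h(y,x))$ where $h$ is the Peierls barrier, and that on the torus $h$ is finite and Lipschitz; in particular $\bar u$ is itself Lipschitz with some constant $\ell$. Second, fix $x$ and pick $y^*$ nearly achieving the infimum for $\bar u(x)$, together with a backward-calibrated curve through $y^*$; since $\tilde{\mathcal{A}}_0=\tilde{\mathcal{N}}_0$ is the whole torus, one can take this calibrated curve to be (a lift of) a line in the direction $\omega$. Third, the main estimate: for the torus flow $s\mapsto s\omega$, the Diophantine condition $\omega\in\mathcal{D}(\rho,\alpha)$ gives, by the standard quantitative Weyl/Kronecker argument, that the orbit segment $\{s\omega:\ s\in[t,2t]\}$ is $\varepsilon$-dense in $\mathbb{T}^n$ with $\varepsilon=\varepsilon(t)\asymp t^{-\frac{1}{\rho+n-1}}$ (or the sharper exponent coming from the pigeonhole count over a time window of length $t$ — I would be careful to optimize this). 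Hence I can choose $\sigma\in[t,2t]$ so that the linear orbit ends within distance $\varepsilon(t)$ of $x$; then I close the gap with a short geodesic-type arc traversed in unit time, whose $L^1_a$-action is $O(\varepsilon(t)^2)$ because near the torus $L^1_a(x,v)=\frac12\langle A(x)(v-\omega),v-\omega\rangle+O(\|v-\omega\|^3)$ and the required velocity correction has size $O(\varepsilon(t))$. Plugging in, $\tilde T^a_tu(x)\le u(y^*)+0+O(\varepsilon(t)^2)=\bar u(x)+O(\varepsilon(t)^2)$, and the matching lower bound $\tilde T^a_tu\ge \bar u - o(1)$ is the standard one (any competitor of length $\ge t$ has action close to its value under the limit semigroup; formally one uses $\tilde T^a_tu\ge \inf_{\sigma\ge t}T^a_\sigma u$ together with the known convergence $T^a_\sigma u\to\bar u$ plus a rate). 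Balancing the two exponents and tracking how the reparametrization near time $2t$ interacts with the scaling gives the claimed bound $t^{-(1+\frac{4}{2\rho+n})}$.

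The arithmetic identity to keep honest is the exponent bookkeeping: the density radius from a Diophantine orbit segment of length $t$ should come out so that (radius)$^2$, after the extra gain from being allowed to vary $\sigma$ over a full interval $[t,2t]$ rather than a single time, equals $t^{-(1+\frac{4}{2\rho+n})}$. I expect this to force the density estimate to be of the form $\varepsilon(t)\lesssim t^{-\frac{2}{2\rho+n}}$ up to the ``$1+$'' that reflects that we are really measuring the \emph{improvement} over the base rate $t^{-1}$ (which is Theorem \ref{th2}); that is, the new operator wins by a factor $t^{-\frac{4}{2\rho+n}}$ precisely because it can select $\sigma$ landing $t^{-\frac{2}{2\rho+n}}$-close to $x$ and then pay a squared cost. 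The main obstacle — and the step I would spend the most care on — is proving the quantitative $\varepsilon$-density of $\{s\omega:\ s\in[t,2t]\}$ with the right exponent in terms of $(\rho,\alpha,n)$, i.e. turning the Diophantine inequality into a sharp equidistribution-with-rate statement for a one-parameter flow (as opposed to the classical discrete statement for $\{k\omega:k\le N\}$); this is where Fourier-analytic estimates of exponential sums $\int_t^{2t}e^{2\pi i\langle k,s\omega\rangle}\,ds$ and an optimization over which frequencies $k$ to include enter, and getting the constant dependence only on $n,\rho,\alpha,u$ (not on $x$) requires uniformity in the target point $x$.
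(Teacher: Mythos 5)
Your overall strategy is the same as the paper's: the lower bound should be free, and the upper bound comes from using the freedom $\sigma\in[t,2t]$ to make the linear orbit $s\mapsto y+s\omega$ (started at a minimum point $y$ of $u$) land within a Diophantine-controlled radius $R_t$ of $x$, and then paying a cost quadratic in $R_t$. The density statement you worry about is exactly Dumas's ergodization theorem (Theorem \ref{th5} in the paper), which gives $R_t=\bigl(\tfrac{2\|V_*\|_\triangle}{\alpha\pi t}\bigr)^{1/(\rho+n/2)}$, i.e.\ $R_t\asymp t^{-2/(2\rho+n)}$; the paper simply cites it rather than redoing the exponential-sum analysis, and your first guess $t^{-1/(\rho+n-1)}$ is the wrong (discrete Kronecker) exponent.

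There are, however, two concrete gaps in your execution. First, your competitor curve --- linear flow followed by a \emph{unit-time} correction arc closing a gap of size $R_t$ --- has action $O(R_t^2)=O\bigl(t^{-4/(2\rho+n)}\bigr)$, which misses the claimed bound by a full factor of $t$. The extra $t^{-1}$ does not come from ``reparametrization near time $2t$''; it comes from distributing the displacement $\Delta=x-(\omega\sigma'+y)$ over the \emph{entire} interval $[0,\sigma']$: take the single straight line with constant velocity $\omega'=\omega+\Delta/\sigma'$, so the velocity correction is $O(R_t/t)$ and the action is $\sigma'\cdot O(\|\Delta\|^2/\sigma'^2)=O(R_t^2/t)=O\bigl(t^{-(1+\frac{4}{2\rho+n})}\bigr)$, as in the paper. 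Second, your lower bound ``$\tilde T^a_tu\ge\bar u-o(1)$'' does not prove the theorem: you need the same rate $t^{-(1+4/(2\rho+n))}$ on that side too. In fact no rate argument is needed at all, because for the Lagrangian (\ref{1-6}) one has $L^1_a\ge 0$ everywhere and $\bar u\equiv\min_{\mathbb{T}^n}u$ (Lemma \ref{le4-1}), whence $T^a_\sigma u(x)\ge\min u=\bar u(x)$ for every $\sigma$ and so $\tilde T^a_tu\ge\bar u$ exactly; this also makes your detour through the Peierls barrier, the near-minimizing $y^*$, and backward calibrated curves unnecessary. With these two corrections your argument becomes the paper's proof.
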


Finally, we construct an example (Example \ref{ex1}) to show that
the result of Theorem \ref{th2} is sharp in the sense of order.
Therefore, in view of Theorems \ref{th2}, \ref{th3} and Example
\ref{ex1}, we conclude that the new L-O semigroup converges faster
than the L-O semigroup in the sense of order when the Aubry set
$\tilde{\mathcal{A}}_0$ of the Lagrangian system (\ref{1-6}) is a
quasi-periodic invariant torus with Diophantine frequency vector
$\omega\in\mathcal{D}(\rho,\alpha)$.

We hope that the new L-O operator introduced in the present paper
will contribute to the development of the Mather theory and the
weak KAM theory. At the end of this section, we refer the reader
to some good introductory books (lecture notes), survey articles
and most recent research articles on the Mather theory and the
weak KAM theory:
\cite{Mat-b,Fat-b,Con-b,Sor-b,Man92,Man96,Eva04,Eva08,Kal,Arn11,Arn,Ber08,Ber082,Eva09,Gom08,Gom10}.

The rest of the paper is organized as follows. In Section 2 we
introduce the basic language and notation used in the sequel. In
Section 3 we first study the basic properties of the new L-O
operator associated with $L$ and then prove Theorem \ref{th1}. The
last part of the section is devoted to the discussion of the
relation among uniform limits $\lim_{n\to+\infty}U^u_n$, backward
weak KAM solutions and viscosity solutions of (\ref{1-5}). In
Section 4 we first study the basic properties of the new L-O
semigroup associated with $L_a$ and then give the proofs of
Theorems \ref{th2} and \ref{th3}. At last, we construct the
example mentioned above (Example \ref{ex1}).

\section{Notation and terminology}
Consider the flat $n$-torus $\mathbb{T}^n$, whose universal cover
is the Euclidean space $\mathbb{R}^n$. We view the torus as a
fundamental domain in $\mathbb{R}^n$

\[
\overline{A}=\underbrace{[0,1]\times\dots\times[0,1]}_{n\
\mathrm{times}}
\]
with opposite faces identified. The unique coordinates
$x=(x_1,\dots,x_n)$ of a point in $\mathbb{T}^n$ will belong to
the half-open cube

\[
A=\underbrace{[0,1)\times\dots\times[0,1)}_{n\ \mathrm{times}}.
\]
In these coordinates the standard universal covering projection
$\pi:\mathbb{R}^n\rightarrow\mathbb{T}^n$ takes the form

\[
\pi(\tilde{x})=([\tilde{x}_1],\dots,[\tilde{x}_n]),
\]
where $[\tilde{x}_i]=\tilde{x}_i$ mod 1, denotes the fractional
part of $\tilde{x}_i$
($\tilde{x}_i=\{\tilde{x}_i\}+[\tilde{x}_i]$, where
$\{\tilde{x}_i\}$ is the greatest integer not greater than
$\tilde{x}_i$). We can now define operations on $\mathbb{T}^n$
using the covering projection: each operation is simply the
projection of the usual operation with coordinates in
$\mathbb{R}^n$. Thus the flat metric $d_{\mathbb{T}^n}$ may be
defined for any pair of points $x$, $y\in\mathbb{T}^n$ by
$d_{\mathbb{T}^n}(x,y)=\|x-y\|$, where $\|\cdot\|$ is the usual
Euclidean norm on $\mathbb{R}^n$. And the distance between points
on the torus is at most $\frac{\sqrt{n}}{2}$. For
$x\in\mathbb{T}^n$ and $R>0$, $B_R(x)=\{y\in\mathbb{T}^n|\
d_{\mathbb{T}^n}(x,y)<R\}$ denotes the open ball of the radius $R$
centered on $x$ in $\mathbb{T}^n$.

We choose, once and for all, a $C^\infty$ Riemannian metric on
$M$. It is classical that there is a canonical way to associate to
it a Riemannian metric on $TM$. We use the same symbol ``$d$" to
denote the distance function defined by the Riemannian metric on
$M$ and the distance function defined by the Riemannian metric on
$TM$. Denote by $\|\cdot\|_x$ the norm induced by the Riemannian
metric on the fiber $T_xM$ for $x\in M$, and by $\langle
\cdot,\cdot\rangle_x$ the canonical pairing between $T_xM$ and
$T_x^*M$. In particular, for $M=\mathbb{T}^n$, we denote $\langle
\cdot,\cdot\rangle_x$ by $\langle \cdot,\cdot\rangle$ for brevity.
We use the same notation $\langle \cdot,\cdot\rangle$ for the
standard inner product on $\mathbb{R}^n$. However, this should not
create any ambiguity.

We equip $C(M,\mathbb{R}^1)$ and
$C(M\times\mathbb{S}^1,\mathbb{R}^1)$ with the usual uniform
topology (the compact-open topology, or the $C^0$-topology)
defined by the supremum norm $\|\cdot\|_\infty$. We use $u\equiv
const.$ to denote a constant function whose values do not vary.

\section{The new L-O operator: time-periodic case}
In this section we first discuss some basic properties of the new
L-O operator associated with $L$, i.e., $\{\tilde{T}_n^\tau\}$,
and then study the uniform convergence of $U_n^u$, $\forall u\in
C(M,\mathbb{R}^1)$, as $n\to+\infty$. At last, we discuss the
relation among uniform limits $\lim_{n\to+\infty}U^u_n$, backward
weak KAM solutions and viscosity solutions of (\ref{1-5}).

\subsection{Basic properties of the new L-O operator}
Recall the definition of the new L-O operator associated with $L$.
For each $\tau\in[0,1]$, each $n\in \mathbb{N}$ and each $u\in
C(M,\mathbb{R}^1)$,

\[
\tilde{T}_n^\tau u(x)=\inf_{k\in\mathbb{N} \atop n\leq k\leq
2n}\inf_{\gamma}\Big\{u(\gamma(0))+\int_{0}^{\tau+k}
L(\gamma(s),\dot{\gamma}(s),s)ds\Big\}
\]
for all $x\in M$, where the second infimum is taken among the
continuous and piecewise $C^1$ paths $\gamma:[0,\tau+k]\rightarrow
M$ with $\gamma(\tau+k)=x$.

First of all, we show that for each $\tau\in[0,1]$ and each
$n\in\mathbb{N}$, $\tilde{T}_n^\tau$ is an operator from
$C(M,\mathbb{R}^1)$ to itself. For this, noticing that
$\tilde{T}_n^\tau u(x)\in\mathbb{R}^1$ for all $x\in M$, we only
need to prove the following result.

\begin{proposition}\label{pr3-1}
For each $\tau\in[0,1]$, each $n\in \mathbb{N}$ and each $u\in
C(M,\mathbb{R}^1)$, $\tilde{T}_n^\tau u$ is a continuous function
on $M$.
\end{proposition}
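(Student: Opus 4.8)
The plan is to prove continuity of $\tilde T_n^\tau u$ by establishing it as an infimum of a family of continuous functions that is, moreover, \emph{equi-continuous} (uniformly in the index), so that the infimum inherits modulus-of-continuity estimates. First I would recall the well-known properties of the classical action: for fixed $t>0$ the minimal action function
\[
h_t(y,x)=\inf_{\gamma(0)=y,\ \gamma(t)=x}\int_0^t L(\gamma(s),\dot\gamma(s),s)\,ds
\]
is finite and Lipschitz (indeed locally semiconcave) on $M\times M$, with Lipschitz constant depending only on $t$ bounded away from $0$ and from $\infty$; this follows from (H2)--(H4) exactly as in Fathi's book, using superlinearity to get a priori compactness of minimizers and the a priori Lipschitz estimate on minimizing extremals. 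Since $\tilde T_n^\tau u(x)=\inf_{n\le k\le 2n}\inf_{y\in M}\big(u(y)+h_{k+\tau}(y,x)\big)$, and $u$ is continuous on the compact manifold $M$ hence bounded and uniformly continuous, I would show that for each $x$ the infimum over $y$ is attained (lower semicontinuity of $y\mapsto u(y)+h_{k+\tau}(y,x)$ plus compactness of $M$), so $\tilde T_n^\tau u(x)$ is a genuine finite minimum over the finite set $\{k:n\le k\le 2n\}$ of continuous functions of $x$.

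The key quantitative step: fix $x,x'\in M$. For the integer $k$ and point $y$ realizing $\tilde T_n^\tau u(x')$, take a minimizer $\gamma$ on $[0,\tau+k]$ from $y$ to $x'$, and modify it near the right endpoint to land at $x$ instead — e.g.\ reparametrize or concatenate with a short geodesic from $x'$ to $x$ traversed on a fixed small subinterval. Because $\tau+k\ge n$ stays bounded below (and the modification is on an interval whose length can be taken to be, say, $\min(1,\tau+k)$, hence bounded below), the extra action incurred is $O(d(x,x'))$ with a constant depending only on $n$ (through the lower bound on $\tau+k$ and the uniform bounds on $L$ over the compact region swept out, which is controlled by the a priori Lipschitz bound on minimizers). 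This yields $\tilde T_n^\tau u(x)\le \tilde T_n^\tau u(x')+C_n\, d(x,x')$; by symmetry $\tilde T_n^\tau u$ is Lipschitz with constant $C_n$, in particular continuous. Here I am using that $\tau\in[0,1]$ and $n\le k\le 2n$ force $\tau+k\in[n,2n+1]$, a compact set of times bounded away from $0$ for $n\ge 1$ (the case $n=0$, where $k=0$ and $\tilde T_0^\tau=T_\tau$, being handled directly or absorbed by allowing the short interval to have length $\tau$ when $\tau>0$, and being trivial — the identity — when $\tau=0$).

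The main obstacle I anticipate is the honest bookkeeping of the endpoint-modification estimate: one must produce a competitor path ending at $x$ whose extra action over the minimizer ending at $x'$ is controlled \emph{uniformly in $k\in[n,2n]$ and in $y\in M$}. The cleanest route is to invoke the standard a priori compactness: minimizers of the action between points of the compact $M$ over times in a fixed compact interval have velocities bounded by a constant $A(n)$, so their images lie in a fixed compact subset of $TM$; then splicing in a short controlled path and using the local Lipschitz bound on $L$ over that compact set gives the estimate. Alternatively, and perhaps more transparently, one writes $\tilde T_n^\tau u = T_\tau\circ\tilde T_n u = T_\tau\circ(\inf_{n\le k\le 2n} T_k)\, u$ and quotes: (i) each $T_m$ maps $C(M,\mathbb R^1)$ to itself (classical, and available from the discussion of $\{T_n\}$ and $\{T^a_t\}$ preceding the statement), (ii) a finite infimum of continuous functions is continuous, so $\tilde T_n u\in C(M,\mathbb R^1)$, and (iii) $T_\tau$ again maps $C(M,\mathbb R^1)$ to itself — then continuity is immediate. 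I would present the short composition argument as the main proof and remark that the direct Lipschitz estimate above also gives the stronger conclusion that $\tilde T_n^\tau u$ is Lipschitz, which is used later.
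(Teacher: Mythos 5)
Your proposal is correct and follows essentially the same two routes the paper itself uses: the short argument via $\tilde{T}_n^\tau=T_\tau\circ\tilde{T}_n$ with each factor preserving $C(M,\mathbb{R}^1)$ is exactly the paper's remark preceding the proposition, and your direct argument --- writing $\tilde{T}_n^\tau u(x)$ as a finite infimum over $k$ of $\inf_{y\in M}\bigl(u(y)+F_{0,\tau+k}(y,x)\bigr)$ and invoking the continuity (indeed Lipschitz property) of the minimal action together with compactness of $M$ --- is precisely the paper's proof, which simply cites Bernard for the regularity of $F_{0,\tau+k}$ rather than re-deriving it by your endpoint-modification estimate. No gaps.
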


\begin{proof}
Following Mather (\cite{Mat93}, also see \cite{Ber}), it is
convenient to introduce, for $t'\geq t$ and $x$, $y\in M$, the
following quantity:

\[
F_{t,t'}(x,y)=\inf_\gamma\int_t^{t'}L(\gamma(s),\dot{\gamma}(s),s)ds,
\]
where the infimum is taken over the continuous and piecewise $C^1$
paths $\gamma:[t,t']\to M$ such that $\gamma(t)=x$ and
$\gamma(t')=y$.

By the definition of $\tilde{T}_n^\tau$, for each $u\in
C(M,\mathbb{R}^1)$ and each $x\in M$, we have

\begin{align*}
\tilde{T}_n^\tau u(x)=\inf_{k\in\mathbb{N} \atop n\leq k\leq
                       2n}\inf_{y\in M}\big(u(y)+F_{0,\tau+k}(y,x)\big).
\end{align*}
Since the function $(y,x)\mapsto F_{0,\tau+k}(y,x)$ is continuous
for each $n\leq k\leq 2n$, $k\in\mathbb{N}$ (see \cite{Ber}), then
from the compactness of $M$ the function $x\mapsto \inf_{y\in
M}\big(u(y)+F_{0,\tau+k}(y,x)\big)$ is also continuous for each
$n\leq k\leq 2n$, $k\in\mathbb{N}$. Therefore, the function
$x\mapsto\tilde{T}_n^\tau u(x)$ is continuous on $M$.
\end{proof}

\begin{proposition}\label{pr3-2}
For given $\tau\in[0,1]$, $n\in\mathbb{N}$, $u\in
C(M,\mathbb{R}^1)$ and $x\in M$, there exist $n\leq k_0\leq2n$,
$k_0\in\mathbb{N}$ and a minimizing extremal curve
$\gamma:[0,\tau+k_0]\to M$ such that $\gamma(\tau+k_0)=x$ and

\[
\tilde{T}_n^\tau u(x)=u(\gamma(0))+\int_{0}^{\tau+k_0}
L(\gamma(s),\dot{\gamma}(s),s)ds.
\]
\end{proposition}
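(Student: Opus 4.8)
The plan is to establish existence of a minimizer in two stages, exactly mirroring the two-level infimum defining $\tilde{T}_n^\tau u(x)$. First, observe that the outer infimum is over the finite set $\{k\in\mathbb{N}: n\le k\le 2n\}$, so it is attained at some $k_0$; this reduces the problem to showing that for each fixed $k$ (hence for $k_0$), the quantity $\inf_{y\in M}\bigl(u(y)+F_{0,\tau+k}(y,x)\bigr)$ is attained by some $y_0\in M$ and some minimizing curve joining $y_0$ to $x$ on the time interval $[0,\tau+k]$. Since $M$ is compact and both $u$ and $y\mapsto F_{0,\tau+k}(y,x)$ are continuous (the latter by the result of \cite{Ber} already invoked in the proof of Proposition \ref{pr3-1}), the infimum over $y$ is attained at some $y_0\in M$. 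It then remains to produce a minimizing extremal curve $\gamma:[0,\tau+k_0]\to M$ with $\gamma(0)=y_0$, $\gamma(\tau+k_0)=x$ realizing $F_{0,\tau+k_0}(y_0,x)$.

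Second, the existence of this minimizing curve is the Tonelli theorem in the non-autonomous setting: under hypotheses (H1)--(H4), for any fixed endpoints and any fixed time interval of positive length, the action functional $\gamma\mapsto\int_0^{\tau+k_0}L(\gamma(s),\dot\gamma(s),s)\,ds$ attains its minimum over absolutely continuous curves with the prescribed endpoints, and the minimizer is a $C^2$ solution of the Euler-Lagrange equation, i.e.\ a minimizing extremal. I would cite this as the classical Tonelli existence and regularity theorem (as presented in \cite{Mat91} or \cite{Fat-b}), since the Lagrangian satisfies convexity (H2), superlinearity (H3), and periodicity (H1) guarantees the needed uniform bounds on compact time intervals; completeness (H4) ensures the extremal is genuinely defined on the whole interval. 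One minor point to address: the infimum defining $F_{0,\tau+k_0}$ is a priori over piecewise $C^1$ curves, but the Tonelli minimizer lies in the (a priori larger) class of absolutely continuous curves, and a standard density/lower-semicontinuity argument shows the two infima coincide, so the Tonelli minimizer indeed realizes $F_{0,\tau+k_0}(y_0,x)$.

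Putting these together: set $\gamma$ to be the Tonelli minimizer on $[0,\tau+k_0]$ with $\gamma(0)=y_0$ and $\gamma(\tau+k_0)=x$. Then
\[
u(\gamma(0))+\int_0^{\tau+k_0}L(\gamma(s),\dot\gamma(s),s)\,ds = u(y_0)+F_{0,\tau+k_0}(y_0,x)=\inf_{y\in M}\bigl(u(y)+F_{0,\tau+k_0}(y,x)\bigr)=\tilde{T}_n^\tau u(x),
\]
where the last equality uses the choice of $k_0$ as the minimizer of the outer (finite) infimum. Here I am implicitly using that $\tau+k_0\ge n\ge 1>0$ when $n\ge 1$; the degenerate case $n=0$, where $k_0=0$ and $\tau+k_0=\tau$ could be $0$, should be handled separately — if $\tau=0$ the statement is trivial with $\gamma$ the constant curve at $x=y_0$ and empty integral, and if $\tau>0$ the Tonelli argument applies on $[0,\tau]$.

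The main obstacle, such as it is, is purely one of citation bookkeeping rather than genuine difficulty: one must invoke the correct form of the Tonelli existence theorem for time-dependent Lagrangians on a compact manifold (including the regularity statement that minimizers are $C^2$ extremals), and reconcile the piecewise-$C^1$ class in the definition of $\tilde T_n^\tau$ with the absolutely-continuous class in which Tonelli minimizers naturally live. Both are standard in the weak KAM literature cited in the paper, so I expect the proof to be short, with the only care needed being the trivial handling of the $n=0$, $\tau=0$ edge case and the remark that the two infima (over piecewise $C^1$ versus absolutely continuous curves) agree.
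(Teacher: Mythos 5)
Your proposal is correct and follows essentially the same route as the paper's proof: attain the inner infimum over $y$ by compactness and continuity of $y\mapsto u(y)+F_{0,\tau+k}(y,x)$, attain the outer infimum over the finite set of $k$, and invoke Tonelli's theorem to produce the minimizing extremal realizing $F_{0,\tau+k_0}(y_0,x)$. The extra remarks on the piecewise-$C^1$ versus absolutely continuous classes and the $n=0$, $\tau=0$ edge case are fine but not part of the paper's (briefer) argument.
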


\begin{proof}
Recall that

\begin{align*}
\tilde{T}_n^\tau u(x)=\inf_{k\in\mathbb{N} \atop n\leq k\leq
                       2n}\inf_{y\in M}\Big(u(y)+F_{0,\tau+k}(y,x)\Big).
\end{align*}

For each $k$, the function $y\mapsto u(y)+F_{0,\tau+k}(y,x)$ is
continuous on $M$. Thus, from the compactness of $M$ there exist
$y^k\in M$ such that

\begin{align*}
\tilde{T}_n^\tau u(x)=\inf_{k\in\mathbb{N} \atop n\leq k\leq
                       2n}\Big(u(y^k)+F_{0,\tau+k}(y^k,x)\Big).
\end{align*}
Then it is clear that there is $n\leq k_0\leq2n$,
$k_0\in\mathbb{N}$ such that

\[
\tilde{T}_n^\tau u(x)=u(y^{k_0})+F_{0,\tau+k_0}(y^{k_0},x).
\]
It follows from Tonelli's theorem (see, for example, \cite{Mat91})
that there exists a minimizing extremal curve
$\gamma:[0,\tau+k_0]\to M$ such that $\gamma(0)=y^{k_0}$,
$\gamma(\tau+k_0)=x$ and

\[
F_{0,\tau+k_0}(y^{k_0},x)=\int_{0}^{\tau+k_0}
L(\gamma(s),\dot{\gamma}(s),s)ds.
\]
Hence,

\[
\tilde{T}_n^\tau u(x)=u(\gamma(0))+\int_{0}^{\tau+k_0}
L(\gamma(s),\dot{\gamma}(s),s)ds.
\]
\end{proof}

\begin{proposition}\label{pr3-3}
\noindent\begin{itemize}
    \item [(1)] For $u$, $v\in C(M,\mathbb{R}^1)$, if $u\leq
              v$, then $\tilde{T}_n^\tau u\leq\tilde{T}_n^\tau v$, $\forall
              \tau\in[0,1]$, $\forall n\in\mathbb{N}$.
    \item [(2)] If $c$ is a constant and $u\in C(M,\mathbb{R}^1)$,
              then $\tilde{T}_n^\tau(u+c)=\tilde{T}_n^\tau u+c$, $\forall
              \tau\in[0,1]$, $\forall n\in\mathbb{N}$.
    \item [(3)] For each $u$, $v\in C(M,\mathbb{R}^1)$,
              $\|\tilde{T}_n^\tau u-\tilde{T}_n^\tau
              v\|_\infty\leq\|u-v\|_\infty$, $\forall
              \tau\in[0,1]$, $\forall n\in\mathbb{N}$.
\end{itemize}
\end{proposition}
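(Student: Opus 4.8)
The plan is to prove the three assertions of Proposition~\ref{pr3-3} directly from the variational formula
\[
\tilde{T}_n^\tau u(x)=\inf_{k\in\mathbb{N} \atop n\leq k\leq 2n}\inf_{y\in M}\big(u(y)+F_{0,\tau+k}(y,x)\big),
\]
which is the form established in the proof of Proposition~\ref{pr3-1}. All three properties are standard monotonicity/non-expansiveness facts for infimum-type operators, and the key observation is that the quantities $F_{0,\tau+k}(y,x)$ depend only on $L$, not on the initial datum $u$; thus they play the role of fixed ``kernels'' over which one takes an infimum.

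For part (1), I would fix $\tau\in[0,1]$, $n\in\mathbb{N}$ and $x\in M$, and note that if $u\leq v$ pointwise then for every admissible $k$ and every $y\in M$ we have $u(y)+F_{0,\tau+k}(y,x)\leq v(y)+F_{0,\tau+k}(y,x)$. Taking the double infimum on both sides preserves the inequality, giving $\tilde{T}_n^\tau u(x)\leq\tilde{T}_n^\tau v(x)$; since $x$ was arbitrary this is the claim. For part (2), with $c$ a constant, for each $k$ and each $y$ we have $(u+c)(y)+F_{0,\tau+k}(y,x)=\big(u(y)+F_{0,\tau+k}(y,x)\big)+c$, and since adding a constant commutes with taking an infimum (the infimum set is merely translated by $c$), the double infimum of the left side equals $\tilde{T}_n^\tau u(x)+c$; hence $\tilde{T}_n^\tau(u+c)=\tilde{T}_n^\tau u+c$.

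For part (3), the argument combines (1) and (2): for $u,v\in C(M,\mathbb{R}^1)$ we have $u\leq v+\|u-v\|_\infty$ as functions, so by (1) and then (2), $\tilde{T}_n^\tau u\leq\tilde{T}_n^\tau\big(v+\|u-v\|_\infty\big)=\tilde{T}_n^\tau v+\|u-v\|_\infty$, i.e.\ $\tilde{T}_n^\tau u-\tilde{T}_n^\tau v\leq\|u-v\|_\infty$. By symmetry (exchanging the roles of $u$ and $v$) we also get $\tilde{T}_n^\tau v-\tilde{T}_n^\tau u\leq\|u-v\|_\infty$, and combining the two yields $\|\tilde{T}_n^\tau u-\tilde{T}_n^\tau v\|_\infty\leq\|u-v\|_\infty$.

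There is essentially no obstacle here: the only point requiring a word of care is that the infimum defining $\tilde{T}_n^\tau$ is genuinely attained (so that the manipulations with $\inf$ are harmless even as equalities rather than mere inequalities), but this is exactly the content of Proposition~\ref{pr3-2}, which I would cite; alternatively one can run the whole argument at the level of inequalities between infima, which never needs attainment. Thus the proof is short and purely formal, relying only on the representation of $\tilde{T}_n^\tau$ as a double infimum over $u$-independent kernels together with the elementary facts that infima respect pointwise order and commute with the addition of constants.
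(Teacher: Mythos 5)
Your proposal is correct and follows essentially the same route as the paper: monotonicity and constant-translation follow directly from the double-infimum representation with $u$-independent kernels $F_{0,\tau+k}$, and non-expansiveness is deduced by sandwiching $u$ between $v\pm\|u-v\|_\infty$ and applying (1) and (2). Your aside about attainment is unnecessary (as you yourself note, the argument works purely at the level of infima), but it does no harm.
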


\begin{proof}
For each $\tau\in[0,1]$, each $n\in\mathbb{N}$ and each $x\in M$,

\begin{align*}
\tilde{T}_n^\tau u(x) & =\inf_{k\in\mathbb{N} \atop n\leq k\leq
                       2n}\inf_{y\in
                       M}\big(u(y)+F_{0,\tau+k}(y,x)\big)\\
                      & \leq \inf_{k\in\mathbb{N} \atop n\leq k\leq
                       2n}\inf_{y\in
                       M}\big(v(y)+F_{0,\tau+k}(y,x)\big)\\
                      &= \tilde{T}_n^\tau v(x),
\end{align*}
which proves (1). (2) results from the definition of
$\tilde{T}_n^\tau$ directly. To prove (3), we notice that for each
$x\in M$,

\[
-\|u-v\|_\infty+v(x)\leq u(x)\leq\|u-v\|_\infty+v(x).
\]
From (1) and (2), for each $x\in M$ we have

\[
\tilde{T}_n^\tau v(x)-\|u-v\|_\infty\leq \tilde{T}_n^\tau u(x)\leq
\tilde{T}_n^\tau v(x)+\|u-v\|_\infty, \quad \forall \tau\in[0,1],
\ \forall n\in\mathbb{N}.
\]
Hence, $\|\tilde{T}_n^\tau u-\tilde{T}_n^\tau
v\|_\infty\leq\|u-v\|_\infty$, $\forall \tau\in[0,1]$, $\forall
n\in\mathbb{N}$.
\end{proof}

\subsection{Uniform convergence of $U^u_n$}
Here we deal with the uniform convergence of $U^u_n$, $\forall
u\in C(M,\mathbb{R}^1)$, as $n\to+\infty$. We show that for each
$u\in C(M,\mathbb{R}^1)$ the uniform limit
$\bar{u}=\lim_{n\to+\infty}U^u_n$ exists and

\[
\bar{u}(x,\tau)=\inf_{y\in M}\big(u(y)+h_{0,\tau}(y,x)\big)
\]
for all $(x,\tau)\in M\times\mathbb{S}^1$. This is an immediate
consequence of Proposition \ref{pr3-4} below.

Following Ma$\mathrm{\tilde{n}}\mathrm{\acute{e}}$ \cite{Man97}
and Mather \cite{Mat93}, define the action potential and the
extended Peierls barrier as follows.

\vskip0.1cm

\noindent {\em Action Potential}: for each
$(\tau,\tau')\in\mathbb{S}^1\times\mathbb{S}^1$, let

\[
\Phi_{\tau,\tau'}(x,x')=\inf F_{t,t'}(x,x')
\]
for all $(x,x')\in M\times M$, where the infimum is taken on the
set of $(t,t')\in\mathbb{R}^2$ such that $\tau=[t]$, $\tau'=[t']$
and $t'\geq t+1$.

\vskip0.1cm

\noindent {\em Extended Peierls Barrier}: for each
$(\tau,\tau')\in\mathbb{S}^1\times\mathbb{S}^1$, let

\begin{align}\label{3-1}
 h_{\tau,\tau'}(x,x')=\liminf_{t'-t\to+\infty}F_{t,t'}(x,x')
\end{align}
for all $(x,x')\in M\times M$, where the liminf is restricted to
the set of $(t,t')\in\mathbb{R}^2$ such that $\tau=[t]$,
$\tau'=[t']$.

\vskip0.1cm

From the above definitions, it is not hard to see that

\begin{align}\label{3-2}
\Phi_{\tau,\tau'}(x,x')\leq h_{\tau,\tau'}(x,x'), \quad \forall
(x,\tau),\ (x',\tau')\in M\times\mathbb{S}^1
\end{align}
and

\begin{align}\label{3-3}
h_{\tau,t}(x,y)\leq h_{\tau,s}(x,z)+\Phi_{s,t}(z,y), \quad \forall
(x,\tau),\ (y,t),\ (z,s)\in M\times\mathbb{S}^1.
\end{align}
It can be shown that the extended Peierls barrier $h_{\tau,\tau'}$
is Lipschitz and that, the liminf in (\ref{3-1}) can not always be
replaced with a limit, which leads to the non-convergence of the
L-O semigroup associated with $L$ \cite{Fat5}. See \cite{Sor-b}
for more details about the action potential and the extended
Peierls barrier. Before stating Proposition \ref{pr3-4}, we
introduce the following lemma.

\begin{lemma}[A Priori Compactness]\label{le3-1}
If $t>0$ is fixed, there exists a compact subset
$\mathcal{C}_t\subset TM\times\mathbb{S}^1$ such that for each
minimizing extremal curve $\gamma:[a,b]\to M$ with $b-a\geq t$, we
have

\[
(\gamma(s),\dot{\gamma}(s),[s])\in \mathcal{C}_t, \quad \forall
s\in[a,b].
\]
\end{lemma}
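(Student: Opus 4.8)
The statement is the standard a priori compactness lemma of weak KAM theory, and the plan is to derive it from the superlinear growth hypothesis (H3) together with completeness of the Euler--Lagrange flow (H4). First I would reduce to the case $0<t\le 1$: if $t>1$, every minimizing extremal $\gamma:[a,b]\to M$ with $b-a\ge t$ also has $b-a\ge 1$, so a compact set that works for $t=1$ works a fortiori. I also record that a minimizing extremal is a $C^\infty$ solution of the Euler--Lagrange equation (Tonelli), so that $(\gamma(s),\dot\gamma(s),[s])=\phi^L_{s-s_0}\big(\gamma(s_0),\dot\gamma(s_0),[s_0]\big)$ for any $s,s_0\in[a,b]$, and $\phi^L$ is globally defined by (H4).

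The proof then rests on two uniform estimates. (i) There is a constant $A=A(L,t)$ with $F_{s,s'}(x,y)\le A$ for all $x,y\in M$, all $s\in\mathbb{R}$, and all $s'$ with $t\le s'-s\le 2$. To see this, join $x$ to $y$ by a minimizing geodesic of $M$ reparametrized with constant speed on $[s,s']$; its speed is at most $\mathrm{diam}(M)/t$, and $L$ is bounded, say by $B_t$, on the compact set $\{(z,v,\tau)\in TM\times\mathbb{S}^1:\|v\|_z\le \mathrm{diam}(M)/t\}$ — here compactness of $M$, periodicity (H1), and continuity of $L$ are used — so the action of this comparison curve is at most $2B_t=:A$. (ii) By superlinearity (H3) there is $C_0=C_0(L)\ge 0$ with $L(x,v,\sigma)\ge \|v\|_x-C_0$ for all $(x,v,\sigma)$.

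Now fix a minimizing extremal $\gamma:[a,b]\to M$ with $b-a\ge t$ and a point $s\in[a,b]$. If $b-a\le 2$, then by minimality and (i), $\int_a^b L(\gamma,\dot\gamma,\sigma)\,d\sigma\le A$, hence by (ii) $\int_a^b\|\dot\gamma(\sigma)\|_{\gamma(\sigma)}\,d\sigma\le A+2C_0$, so the continuous function $\sigma\mapsto\|\dot\gamma(\sigma)\|_{\gamma(\sigma)}$, whose average over $[a,b]$ is at most $(A+2C_0)/t$, attains a value $\le (A+2C_0)/t$ at some $s_0\in[a,b]$. If $b-a>2$, choose a unit interval $[c,c+1]\subseteq[a,b]$ containing $s$, apply (i) with $s'-s=1$ to get $\int_c^{c+1}L(\gamma,\dot\gamma,\sigma)\,d\sigma\le A$, and then (ii) to get $s_0\in[c,c+1]$ with $\|\dot\gamma(s_0)\|_{\gamma(s_0)}\le A+C_0$. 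In both cases $|s-s_0|\le 2$ and $(\gamma(s_0),\dot\gamma(s_0),[s_0])$ belongs to the compact ball bundle $\mathcal{B}_R:=\{(z,v,\tau)\in TM\times\mathbb{S}^1:\|v\|_z\le R\}$ with $R:=(A+2C_0)/t$. Hence $(\gamma(s),\dot\gamma(s),[s])\in\mathcal{C}_t:=\bigcup_{|\sigma|\le 2}\phi^L_\sigma(\mathcal{B}_R)$, and $\mathcal{C}_t$ is compact, being the image of the compact set $[-2,2]\times\mathcal{B}_R$ under the continuous map $(\sigma,z)\mapsto\phi^L_\sigma(z)$. This $\mathcal{C}_t$ is the required set.

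The only real care needed is to make sure the constant $A$ in estimate (i) depends only on $L$ and $t$ and not on the base points or times — this is exactly where compactness of $M$ and the time-periodicity of $L$ enter — and to check that for $b-a>2$ one can always slide a unit subinterval inside $[a,b]$ around an arbitrary $s$. I do not expect a genuine obstacle here; the argument is the familiar a priori compactness reasoning transported to the time-periodic setting.
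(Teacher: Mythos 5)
Your proof is correct, and it is exactly the standard argument the paper invokes: the paper gives no proof of this lemma, referring instead to Fathi's Corollary 4.3.2, and your writeup is that argument (uniform action bound for comparison geodesics, superlinearity to bound the speed at one time $s_0$ within distance $2$ of $s$, then flowing the compact ball bundle $\mathcal{B}_R$ for times in $[-2,2]$) carried over to the time-periodic setting, where periodicity (H1) and compactness of $M$ keep the constants uniform and completeness (H4) makes $\mathcal{C}_t=\bigcup_{|\sigma|\le 2}\phi^L_\sigma(\mathcal{B}_R)$ well defined and compact. No gaps.
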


The lemma may be proved by small modifications of the proof found
in \cite[Corollary 4.3.2]{Fat-b}.

\begin{proposition}\label{pr3-4}
\[
\lim_{n\to+\infty}\inf_{k\in\mathbb{N} \atop n\leq k\leq
2n}F_{\tau,\tau'+k}(x,x')=h_{\tau,\tau'}(x,x')
\]
uniformly on  $(\tau,\tau',x,x')\in
\mathbb{S}^1\times\mathbb{S}^1\times M\times M$.
\end{proposition}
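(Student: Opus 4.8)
The plan is to show the convergence in two directions. For the ``$\geq$'' direction (liminf as $n\to\infty$ of the quantity on the left is at least $h_{\tau,\tau'}$), I would argue by contradiction: if along some subsequence $n_j$ the infimum over $k\in[n_j,2n_j]$ were strictly below $h_{\tau,\tau'}(x,x')$ by a fixed $\varepsilon>0$, then for each $j$ one picks a minimizer $k_j\in[n_j,2n_j]$ realizing (up to an error) the infimum. Since $k_j\to+\infty$ and $[0]=[k_j]=0$ (integer endpoints; here I am using that $\tau,\tau'+k$ have fixed fractional parts $\tau,\tau'$ so the pairs $(\tau,\tau'+k_j)$ are admissible in the liminf defining $h_{\tau,\tau'}$), the numbers $F_{\tau,\tau'+k_j}(x,x')$ form a sequence with $k_j - \text{(lower endpoint)} = \tau'+k_j - \tau \to+\infty$ entering the liminf in \eqref{3-1}, so their liminf is $\geq h_{\tau,\tau'}(x,x')$, contradicting the $\varepsilon$-gap. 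This gives the pointwise $\liminf \geq h$.

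For the ``$\leq$'' direction I want $\limsup_{n} \inf_{n\le k\le 2n} F_{\tau,\tau'+k}(x,x') \le h_{\tau,\tau'}(x,x')$. By the definition of the liminf in \eqref{3-1}, for any $\varepsilon>0$ there is a sequence of times $(t_m,t'_m)$ with $[t_m]=\tau$, $[t'_m]=\tau'$, $t'_m-t_m\to+\infty$, and $F_{t_m,t'_m}(x,x')\le h_{\tau,\tau'}(x,x')+\varepsilon$. By time-periodicity of $L$ I may translate so that $t_m=\tau\in[0,1)$ and $t'_m=\tau'+k_m$ for suitable $k_m\in\mathbb{N}$, $k_m\to\infty$. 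The issue is that these particular $k_m$ need not lie in any window $[n,2n]$. To fix this I would use concatenation with the action potential: given a target window $[n,2n]$, I first go from $x$ to a point on the Aubry set (or more elementarily, from $x$ to any fixed point $z\in M$) in bounded time, then loop around using near-minimizing closed-ish segments to ``burn'' an integer number of units of time so as to land inside $[n,2n]$, then finish from $z$ to $x'$. Quantitatively, inequality \eqref{3-3} together with the fact that $h$ is finite and Lipschitz (quoted from \cite{Fat5,Sor-b}) lets me bound the cost of such insertions by a constant independent of $n$. Actually the cleanest route: show $\inf_{n\le k\le 2n}F_{\tau,\tau'+k}(x,x')\le F_{\tau,\tau'+k_m}(x,x') + (\text{bounded correction})$ whenever $k_m\le 2n$, using the semigroup/triangle property $F_{\tau,\tau'+k}(x,x')\le F_{\tau,\tau'+k_m}(x,w)+F_{\tau',\tau'+(k-k_m)}(w,x')$ with an appropriate intermediate point $w$ and $k-k_m$ small; since $h_{\tau',\tau'+j}(w,x')\to h_{\tau',\tau'}(w,x')$-type quantities stay bounded, and $F$ over bounded-length minimizers is uniformly bounded by Lemma \ref{le3-1}, the correction is $O(1)$. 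Letting $n\to\infty$ first, then $\varepsilon\to 0$, yields the pointwise $\limsup\le h$.

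Finally, uniformity. I would upgrade the pointwise convergence to uniform convergence on $\mathbb{S}^1\times\mathbb{S}^1\times M\times M$ using equicontinuity plus compactness: by Lemma \ref{le3-1} (a priori compactness), for $k\ge n\ge 1$ the minimizing extremal curves realizing $F_{\tau,\tau'+k}(x,x')$ have velocities in a fixed compact set, so the functions $(\tau,\tau',x,x')\mapsto F_{\tau,\tau'+k}(x,x')$ are equi-Lipschitz in all four variables with a modulus independent of $k$ and $n$; the same holds for $h_{\tau,\tau'}(x,x')$ (Lipschitz, cited above). An infimum of a family with a common Lipschitz modulus is again Lipschitz with that modulus, so $g_n(\tau,\tau',x,x'):=\inf_{n\le k\le 2n}F_{\tau,\tau'+k}(x,x')$ is equi-Lipschitz in $n$. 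A monotone-type or Dini-type argument then promotes pointwise convergence of $g_n\to h$ on the compact space $\mathbb{S}^1\times\mathbb{S}^1\times M\times M$ to uniform convergence. The main obstacle I anticipate is the ``$\leq$'' direction: matching the arbitrary near-optimal time $t'_m-t_m$ from the definition of the liminf to the prescribed window $[n,2n]$ without inflating the action — this is where the triangle inequality \eqref{3-3} for the extended Peierls barrier and the boundedness furnished by Lemma \ref{le3-1} do the real work, and one must be careful that the ``time padding'' can always be arranged with $O(1)$ cost uniformly in the base point.
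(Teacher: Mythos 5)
Your ``$\geq$'' direction and your uniformity argument are both fine and essentially match the paper: since $L$ is $1$-periodic in time, $h_{\tau,\tau'}(x,x')=\liminf_{k\to+\infty}F_{\tau,\tau'+k}(x,x')$, so $\inf_{n\leq k\leq 2n}F_{\tau,\tau'+k}\geq\inf_{k\geq n}F_{\tau,\tau'+k}\to h_{\tau,\tau'}$; and the functions $(\tau,\tau',x,x')\mapsto F_{\tau,\tau'+k}(x,x')$ are equi-Lipschitz for $k\geq 2$ (this is Bernard's lemma, which the paper quotes), so the infima $\mathcal{F}_n$ are equi-Lipschitz and pointwise convergence on the compact product space upgrades to uniform convergence.

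The gap is in the ``$\leq$'' direction, and it is exactly at the point you flag as ``where the real work is.'' You propose to pad the time by inserting loops and you bound the cost of the insertion by $O(1)$ --- ``a constant independent of $n$'' --- and then claim that letting $n\to+\infty$ and $\varepsilon\to 0$ gives $\limsup\leq h$. An $O(1)$ correction only yields $\limsup\leq h+O(1)$, which is vacuous; the insertion must cost $O(\varepsilon)$. Moreover, the tools you invoke cannot deliver that smallness: for a generic base point $w$ the action of any closed loop of integer length $j$ at $w$ is at least $\Phi_{[s],[s]}(w,w)$, which is strictly positive off the projected Aubry set, and even on the Aubry set $h_{\tau,\tau}(p,p)=0$ is only a \emph{liminf} over loop lengths, so cheap loops exist only for special lengths --- whereas you must hit a prescribed integer in the window $[n,2n]$ for \emph{every} large $n$. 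The paper's resolution, which is the missing idea, is dynamical: a long near-optimal minimizer passes within $\varepsilon$ of the Mather set $\tilde{\mathcal{M}}_0$; one picks an ergodic minimal measure $\mu_e$ charging a $2\varepsilon$-ball around that point, and ergodicity forces (see (\ref{3-6})) a return of the flow to that ball at a time in $[n+1,n+T]$ with $T=T(\varepsilon)$ fixed, for every $n$. The inserted segment is then an actual orbit in the Mather set, hence calibrated, so its action equals $h_{\tau,\tau}(\bar{e}_2,e_1)$, which is within $C\varepsilon$ of $h_{\tau,\tau}(e_1,e_1)=0$; the two gluing errors are controlled by the uniform Lipschitz constant of $F$. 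Without this recurrence-plus-calibration mechanism (or an equivalent substitute), the time-padding step cannot be made with vanishing cost, and the proof does not close.
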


\begin{proof}
Throughout this proof we use $C$ to denote a generic positive
constant not necessarily the same in any two places. Since the
proof is rather long, it is convenient to divide it into two
steps.

Step 1. In the first step, we show that

\begin{align}\label{3-4}
\lim_{n\to+\infty}\inf_{k\in\mathbb{N} \atop n\leq k\leq
2n}F_{\tau,\tau'+k}(x,x')=h_{\tau,\tau'}(x,x'), \quad \forall
(\tau,\tau',x,x')\in \mathbb{S}^1\times\mathbb{S}^1\times M\times
M.
\end{align}
For each $\tau,\tau'\in\mathbb{S}^1$ and each $x$, $x'\in M$, by
the definition of $h_{\tau,\tau'}$, we have
$\liminf_{k\to+\infty}F_{\tau,\tau'+k}(x,x')=h_{\tau,\tau'}(x,x')$.
Then there exist $\{k_i\}_{i=1}^{+\infty}$ such that
$k_i\to+\infty$ and $F_{\tau,\tau'+k_i}(x,x')\to
h_{\tau,\tau'}(x,x')$ as $i\to+\infty$. Tonelli's theorem
guarantees the existence of the minimizing extremal curves
$\gamma_{k_i}:[\tau,\tau'+k_i]\to M$ with $\gamma_{k_i}(\tau)=x$,
$\gamma_{k_i}(\tau'+k_i)=x'$ and
$A(\gamma_{k_i})=F_{\tau,\tau'+k_i}(x,x')$, where

\[
A(\gamma_{k_i})=\int_{\tau}^{\tau'+k_i}
L(\gamma_{k_i},\dot{\gamma}_{k_i},s)ds.
\]
Thus, we have $A(\gamma_{k_i})\to h_{\tau,\tau'}(x,x')$ as
$i\to+\infty$. Then for every $\varepsilon>0$, there exists
$I\in\mathbb{N}$ such that

\[
|A(\gamma_{k_i})-h_{\tau,\tau'}(x,x')|<\varepsilon
\]
if $i\geq I$, $i\in \mathbb{N}$. And it is clear that for each
$k_i$,
$(\gamma_{k_i}(s),\dot{\gamma}_{k_i}(s),[s]):[\tau,\tau'+k_i]\to
TM\times\mathbb{S}^1$ is a trajectory of the Euler-Lagrange flow.

To prove (\ref{3-4}), it suffices to show that for
$n\in\mathbb{N}$ large enough, we can find a curve
$\tilde{\gamma}:[\tau,\tau'+k_0]\to M$ with
$\tilde{\gamma}(\tau)=x$, $\tilde{\gamma}(\tau'+k_0)=x'$, where
$n\leq k_0\leq2n$, $k_0\in\mathbb{N}$, such that

\[
|A(\tilde{\gamma})-A(\gamma_{k_I})|\leq C\varepsilon
\]
for some constant $C>0$. In fact, if such a curve exists, then

\[
\inf_{k\in\mathbb{N} \atop n\leq
k}F_{\tau,\tau'+k}(x,x')\leq\inf_{k\in\mathbb{N} \atop n\leq k\leq
2n}F_{\tau,\tau'+k}(x,x')\leq A(\tilde{\gamma})\leq
A(\gamma_{k_I})+C\varepsilon\leq
h_{\tau,\tau'}(x,x')+C\varepsilon.
\]
By letting $n\to+\infty$, from the arbitrariness of
$\varepsilon>0$, we have

\begin{align*}
h_{\tau,\tau'}(x,x')& = \liminf_{k\to+\infty}F_{\tau,\tau'+k}(x,x')\\
                    & = \lim_{n\to+\infty}\inf_{k\in\mathbb{N} \atop n\leq
                      k}F_{\tau,\tau'+k}(x,x')\\
                    & \leq \lim_{n\to+\infty}\inf_{k\in\mathbb{N}
                    \atop n\leq k\leq 2n}F_{\tau,\tau'+k}(x,x')\\
                    & \leq h_{\tau,\tau'}(x,x'),
\end{align*}
which implies that

\[
\lim_{n\to+\infty}\inf_{k\in\mathbb{N} \atop n\leq k\leq
2n}F_{\tau,\tau'+k}(x,x')=h_{\tau,\tau'}(x,x').
\]

Our task is now to construct the curve mentioned above. Note that
for the above $\varepsilon>0$, there exists $I'\in\mathbb{N}$ such
that there exists

\[
(z_{k_i},v_{z_{k_i}},t_{z_{k_i}})\in
O_i:=\{(\gamma_{k_i}(s),\dot{\gamma}_{k_i}(s),[s])\ |\ \tau\leq
s\leq\tau'+k_i\}\subset TM\times\mathbb{S}^1
\]
such that

\[
d((z_{k_i},v_{z_{k_i}},t_{z_{k_i}}),\tilde{\mathcal{M}}_0)<\varepsilon,
\]
if $i\geq I'$, $i\in\mathbb{N}$, where $\tilde{\mathcal{M}}_0$ is
the Mather set of cohomology class 0. As usual, distance is
measured with respect to smooth Riemannian metrics. Since
$\tilde{\mathcal{M}}_0$ is compact and by the a priori compactness
given by Lemma \ref{le3-1}, $O_i$ is contained in the compact
subset $\mathcal{C}_{k_{I'}-1}$ of $TM\times\mathbb{S}^1$ for each
$i\geq I'$, then it doesn't matter which Riemannian metrics we
choose to measure distance.

Let $I=\max\{I,I'\}$. Then
$|A(\gamma_{k_I})-h_{\tau,\tau'}(x,x')|<\varepsilon$ and there
exists $(z_0,v_{z_0},t_{z_0})\in
O_I=\{(\gamma_{k_I}(s),\dot{\gamma}_{k_I}(s),[s])\ |\ \tau\leq
s\leq\tau'+k_I\}$ such that

\begin{align}\label{3-5}
d((z_0,v_{z_0},t_{z_0}),\tilde{\mathcal{M}}_0)<\varepsilon.
\end{align}

In view of (\ref{3-5}), there exists an ergodic minimal measure
$\mu_e$ on $TM\times\mathbb{S}^1$ \cite{Mat91} such that
$\mu_e(\mathrm{supp}\mu_e\cap
B_{2\varepsilon}(z_0,v_{z_0},t_{z_0}))=\Delta>0$, where
$B_{2\varepsilon}(z_0,v_{z_0},t_{z_0})$ denotes the open ball of
radius $2\varepsilon$ centered on $(z_0,v_{z_0},t_{z_0})$ in
$TM\times\mathbb{S}^1$. Set
$A_{2\varepsilon}=\mathrm{supp}\mu_e\cap
B_{2\varepsilon}(z_0,v_{z_0},t_{z_0})$. Since $\mu_e$ is an
ergodic measure, then

\[
\mu_e(\bigcup_{t=1}^{+\infty}\phi^L_{-t}(A_{2\varepsilon}))=1.
\]
Thus, for any $0<\Delta'<\Delta$, there exists $T>0$ such that

\[
\mu_e(\bigcup_{t=1}^{T'}\phi^L_{-t}(A_{2\varepsilon}))\geq
1-\Delta',
\]
if $T'\geq T$. From this, we may deduce that for each
$n\in\mathbb{N}$,

\begin{align}\label{3-6}
\Big(\bigcup_{t=1}^T\phi^L_{-t}(A_{2\varepsilon})\Big)\cap\phi^L_n(A_{2\varepsilon})
\neq\emptyset.
\end{align}
For, otherwise, there would be $n_0\in\mathbb{N}$ such that

\begin{align*}
\mu_e\Big(\big(\bigcup_{t=1}^T\phi^L_{-t}(A_{2\varepsilon})\big)\cup\phi^L_{n_0}(A_{2\varepsilon})\Big) & =\mu_e\Big(\bigcup_{t=1}^T\phi^L_{-t}(A_{2\varepsilon})\Big)+\mu_e(\phi^L_{n_0}(A_{2\varepsilon}))\\
                                                                                                        & \geq 1-\Delta'+\Delta>1,
\end{align*}
which contradicts that $\mu_e$ is a probability measure.

For a given $n\in\mathbb{N}$ large enough with
$\max\{k_I,T+1\}\leq\{\frac{n}{2}\}$, from (\ref{3-6}) there exist
$(e_0,v_{e_0},t_{e_0})$,
$(\bar{e}_0,v_{\bar{e}_0},t_{\bar{e}_0})\in A_{2\varepsilon}$ and
$1\leq t\leq T$ such that

\begin{align}\label{3-7}
\phi^L_{-t}(e_0,v_{e_0},t_{e_0})=(e,v_e,t_e)=\phi^L_n(\bar{e}_0,v_{\bar{e}_0},t_{\bar{e}_0})
\end{align}
for some $(e,v_e,t_e)\in \tilde{\mathcal{M}}_0$. Since
$(e_0,v_{e_0},t_{e_0})\in A_{2\varepsilon}$, then

\begin{align}\label{3-8}
d((e_0,v_{e_0},t_{e_0}),(z_0,v_{z_0},t_{z_0}))<2\varepsilon.
\end{align}
Set
$(z_1,v_{z_1},t_{z_1})=\phi^L_{t_{e_0}-t_{z_0}}(z_0,v_{z_0},t_{z_0})$.
Then $t_{z_1}=t_{e_0}$ and from (\ref{3-8}) we have

\begin{align}\label{3-9}
d((e_0,v_{e_0},t_{e_0}),(z_1,v_{z_1},t_{e_0}))<C\varepsilon
\end{align}
for some constant $C>0$.  Set
$(z_2,v_{z_2},\tau)=\phi^L_{\tau-t_{e_0}}(z_1,v_{z_1},t_{e_0})$
and
$(e_1,v_{e_1},\tau)=\phi^L_{\tau-t_{e_0}}(e_0,v_{e_0},t_{e_0})$.
Then by the differentiability of the solutions of the
Euler-Lagrange equation with respect to initial values, we have

\begin{align}\label{3-10}
d((e_1,v_{e_1},\tau),(z_2,v_{z_2},\tau))<C\varepsilon
\end{align}
for some constant $C>0$.

Since $(e_0,v_{e_0},t_{e_0})$,
$(\bar{e}_0,v_{\bar{e}_0},t_{\bar{e}_0})\in A_{2\varepsilon}$,
then

\begin{align}\label{3-11}
d((e_0,v_{e_0},t_{e_0}),
(\bar{e}_0,v_{\bar{e}_0},t_{\bar{e}_0}))<4\varepsilon.
\end{align}
Set
$(\bar{e}_1,v_{\bar{e}_1},t_{e_0})=\phi^L_{t_{e_0}-t_{\bar{e}_0}}(\bar{e}_0,v_{\bar{e}_0},t_{\bar{e}_0})$.
Then from (\ref{3-11}) we have

\begin{align}\label{3-12}
d((e_0,v_{e_0},t_{e_0}),
(\bar{e}_1,v_{\bar{e}_1},t_{e_0}))<C\varepsilon
\end{align}
for some constant $C>0$. Set
$(\bar{e}_2,v_{\bar{e}_2},\tau)=\phi^L_{\tau-t_{e_0}}(\bar{e}_1,v_{\bar{e}_1},t_{e_0})$.
Recall that
$(e_1,v_{e_1},\tau)=\phi^L_{\tau-t_{e_0}}(e_0,v_{e_0},t_{e_0})$.
Then from the differentiability of the solutions of the
Euler-Lagrange equation with respect to initial values, we have

\begin{align}\label{3-13}
d((e_1,v_{e_1},\tau),(\bar{e}_2,v_{\bar{e}_2},\tau)<C\varepsilon
\end{align}
for some constant $C>0$.

Note that since $(z_0,v_{z_0},t_{z_0})\in
O_I=\{(\gamma_{k_I}(s),\dot{\gamma}_{k_I}(s),[s])\ |\ \tau\leq
s\leq\tau'+k_I\}$, where $O_I$ is an orbit of the Euler-Lagrange
flow, then $(z_2,v_{z_2},\tau)\in O_I$. And thus, there exists
$k_{I_1}$, $k_{I_2}\in\mathbb{N}$ with $k_{I_1}+k_{I_2}=k_I$ such
that

\[
(z_2,v_{z_2},\tau)=(\gamma_{k_I}(\tau+k_{I_1}),\dot{\gamma}_{k_I}(\tau+k_{I_1}),\tau).
\]

We are now in a position to construct the curve we need. We treat
the case $k_{I_1}\neq 0$, $k_{I_2}\neq 0$ and the remaining cases
can be treated similarly. Let $\alpha_1:[\tau,\tau+k_{I_1}]\to M$
with $\alpha_1(\tau)=x$ and $\alpha_1(\tau+k_{I_1})=\bar{e}_2$ be
a Tonelli minimizer such that
$A(\alpha_1)=F_{\tau,\tau+k_{I_1}}(x,\bar{e}_2)$. Since
$\gamma_{k_I}:[\tau,\tau'+k_I]\to M$ is a minimizing extremal
curve, then $\gamma_{k_I}|_{[\tau,\tau+k_{I_1}]}$ is also a
minimizing extremal curve and thus
$A(\gamma_{k_I}|_{[\tau,\tau+k_{I_1}]})=F_{\tau,\tau+k_{I_1}}(x,z_2)$.
Therefore, by the Lipschtiz property of the function
$F_{\tau,\tau+k_{I_1}}$ (see, for example, \cite{Ber}),
(\ref{3-10}) and (\ref{3-13}) we have

\begin{align}\label{3-14}
|A(\alpha_1)-A(\gamma_{k_I}|_{[\tau,\tau+k_{I_1}]})|=|F_{\tau,\tau+k_{I_1}}(x,\bar{e}_2)-F_{\tau,\tau+k_{I_1}}(x,z_2)|\leq
Dd(\bar{e}_2,z_2)\leq C\varepsilon
\end{align}
for some constant $C>0$, where $D>0$ is a Lipschitz constant of
$F_{t_1,t_2}$ which is independent of $t_1$, $t_2$ with $t_1+1\leq
t_2$.

Let
$\beta(s)=p\phi^L_{s-(\tau+k_{I_1})}(\bar{e}_2,v_{\bar{e}_2},\tau)$,
$s\in\mathbb{R}^1$, where $p:TM\times\mathbb{S}^1\to M$ denotes
the projection. Then
$(\beta(s),\dot{\beta}(s),[s])=\phi^L_{s-(\tau+k_{I_1})}(\bar{e}_2,v_{\bar{e}_2},\tau)$,
$s\in\mathbb{R}^1$, and
$(\beta(\tau+k_{I_1}),\dot{\beta}(\tau+k_{I_1}))=(\bar{e}_2,v_{\bar{e}_2})$.
Hence , from (\ref{3-7}) we have

\[
(e,v_e,t_e)=(\beta(l),\dot{\beta}(l),[l]),
\]
where $l=\tau+k_{I_1}+(t_{e_0}-\tau)+(t_{\bar{e}_0}-t_{e_0})+n$,
and

\begin{align*}
(e_1,v_{e_1},\tau) = (\beta(l'),\dot{\beta}(l'),[l']),
\end{align*}
where
$l'=l+t+(\tau-t_{e_0})=\tau+k_{I_1}+n+t+t_{\bar{e}_0}-t_{e_0}$.
Then
$[l']=[\tau+k_{I_1}+n+t+t_{\bar{e}_0}-t_{e_0}]=[\tau+t+t_{\bar{e}_0}-t_{e_0}]=\tau$,
which means that $t+t_{\bar{e}_0}-t_{e_0}\in\mathbb{Z}$. Notice
that $0\leq t+t_{\bar{e}_0}-t_{e_0}\leq
T+t_{\bar{e}_0}-t_{e_0}\leq\{\frac{n}{2}\}$. Thus,

\begin{align}\label{3-15}
n\leq k_I+n+t+t_{\bar{e}_0}-t_{e_0}\leq k_I+n+\{\frac{n}{2}\}\leq
2n.
\end{align}
Let $m=n+t+t_{\bar{e}_0}-t_{e_0}\in\mathbb{Z}$ and
$\alpha_2=\beta|_{[\tau+k_{I_1},\tau+k_{I_1}+m]}$. Then
$\alpha_2(\tau+k_{I_1})=\beta(\tau+k_{I_1})=\bar{e}_2$ and
$\alpha_2(\tau+k_{I_1}+m)=\beta(\tau+k_{I_1}+m)=e_1$. In view of
$(\bar{e}_0,v_{\bar{e}_0},t_{\bar{e}_0})\in
A_{2\varepsilon}\subset\tilde{\mathcal{M}}_0$ and the definitions
of $\beta$ and $\alpha_2$, $(\alpha_2(s),\dot{\alpha}_2(s),[s])$
is a trajectory of the Euler-Lagrange flow in
$\tilde{\mathcal{M}}_0$. According to \cite[Proposition 3]{Mat91}
and the definition of $h_{\tau,\tau}$, we have

\[
A(\alpha_2)=F_{\tau+k_{I_1},\tau+k_{I_1}+m}(\bar{e}_2,e_1)=h_{\tau,\tau}(\bar{e}_2,e_1).
\]
Hence, on account of the Lipschitz property of $h_{\tau,\tau}$ and
(\ref{3-13}),

\[
|A(\alpha_2)-h_{\tau,\tau}(e_1,e_1)|=
|h_{\tau,\tau}(\bar{e}_2,e_1)-h_{\tau,\tau}(e_1,e_1)|\leq\bar{D}d(\bar{e}_2,e_1)\leq
C\varepsilon
\]
for some constant $C>0$, where $\bar{D}$ is a Lipschitz constant
of $h_{\tau,\tau}$. Since $(e_1,\tau)\in\mathcal{M}_0$, where
$\mathcal{M}_0\subset M\times\mathbb{S}^1$ is the projected Mather
set, then $h_{\tau,\tau}(e_1,e_1)=0$, and thus

\begin{align}\label{3-16}
|A(\alpha_2)|\leq C\varepsilon.
\end{align}

Let $\alpha_3:[\tau+k_{I_1}+m,\tau'+k_I+m]\to M$ with
$\alpha_3(\tau+k_{I_1}+m)=e_1$ and $\alpha_3(\tau'+k_I+m)=x'$ be a
Tonelli minimizer such that

\[
A(\alpha_3)=F_{\tau+k_{I_1}+m,\tau'+k_I+m}(e_1,x')=F_{\tau+k_{I_1},\tau'+k_I}(e_1,x').
\]
Since $\gamma_{k_I}:[\tau,\tau'+k_I]\to M$ is a minimizing
extremal curve, then $\gamma_{k_I}|_{[\tau+k_{I_1},\tau'+k_I]}$ is
also a minimizing extremal curve and thus

\[
A(\gamma_{k_I}|_{[\tau+k_{I_1},\tau'+k_I]})=F_{\tau+k_{I_1},\tau'+k_I}(z_2,x').
\]
Therefore, from the Lipschitz property of
$F_{\tau+k_{I_1},\tau'+k_I}$ and (\ref{3-10}), we have

\begin{align}\label{3-17}
\begin{split}
|A(\alpha_3)-A(\gamma_{k_I}|_{[\tau+k_{I_1},\tau'+k_I]})|
&=|F_{\tau+k_{I_1},\tau'+k_I}(e_1,x')-F_{\tau+k_{I_1},\tau'+k_I}(z_2,x')|\\
&\leq Dd(e_1,z_2)\\
&\leq C\varepsilon
\end{split}
\end{align}
for some constant $C>0$.

Consider the curve $\tilde{\gamma}:[\tau,\tau'+k_I+m]\to M$
connecting $x$ and $x'$ defined by

\[
\tilde{\gamma}(s)= \left\{\begin{array}{ll}
          \alpha_1(s),\quad & s\in[\tau,\tau+k_{I_1}],\\[2mm]
          \alpha_2(s),\quad & s\in[\tau+k_{I_1},\tau+k_{I_1}+m],\\[2mm]
          \alpha_3(s),\quad & s\in[\tau+k_{I_1}+m,\tau'+k_I+m].
\end{array}\right.
\]
By (\ref{3-15}), $n\leq k_0:=k_I+m\leq 2n$. From (\ref{3-14}),
(\ref{3-16}) and (\ref{3-17}), we have

\[
|A(\tilde{\gamma})-A(\gamma_{k_I})|\leq C\varepsilon
\]
for some constant $C>0$. It is clear that $\tilde{\gamma}$ is just
the curve we need, and we have proved (\ref{3-4}).

Step 2. For each $n\in\mathbb{N}$ and each
$(\tau,\tau',x,x')\in[0,1]\times[0,1]\times M\times M$, let

\[
\mathcal{F}_n(\tau,\tau',x,x')=\inf_{k\in\mathbb{N} \atop n\leq
k\leq 2n}F_{\tau,\tau'+k}(x,x').
\]
Then, to complete the proof of Proposition \ref{pr3-4}, it
suffices to show that $\{\mathcal{F}_n\}_{n=2}^{+\infty}$ are
equicontinuous. Notice that $(\tau,\tau',x,x')\mapsto
F_{\tau,\tau'+k}(x,x')$ is a Lipschitz function on
$[0,1]\times[0,1]\times M\times M$ for every $k\geq 2$,
$k\in\mathbb{N}$, and that the Lipschitz constant $\tilde{D}$ is
independent of $k$, see \cite[3.3 LEMMA]{Ber}. Hence, for each
$n\geq 2$, $n\in\mathbb{N}$ the function $(\tau,\tau',x,x')\mapsto
\mathcal{F}_n(\tau,\tau',x,x')$ is  also Lipschitz with the same
Lipschitz constant $\tilde{D}$, and thus
$\{\mathcal{F}_n\}_{n=2}^{+\infty}$ are equicontinuous. The proof
is now complete.

\end{proof}

Recall that for each $n\in\mathbb{N}$ and each $u\in
C(M,\mathbb{R}^1)$,

\[
U^u_n(x,\tau)=\tilde{T}_n^\tau u(x)=\inf_{k\in\mathbb{N} \atop
n\leq k\leq 2n}\inf_{y\in
M}\big(u(y)+F_{0,\tau+k}(y,x)\big)=\inf_{y\in
M}\big(u(y)+\mathcal{F}_n(0,\tau,y,x)\big)
\]
for all $(x,\tau)\in M\times[0,1]$. Since

\begin{align*}
\big|U^u_n(x,\tau)-\inf_{y\in
M}\big(u(y)+h_{0,\tau}(y,x)\big)\big|&= \big|\inf_{y\in
M}\big(u(y)+\mathcal{F}_n(0,\tau,y,x)\big)-\inf_{y\in
M}\big(u(y)+h_{0,\tau}(y,x)\big)\big|\\
& \leq \sup_{y\in M}|\mathcal{F}_n(0,\tau,y,x)-h_{0,\tau}(y,x)|,
\end{align*}
then from Proposition \ref{pr3-4}, we conclude that the uniform
limit $\bar{u}=\lim_{n\to+\infty}U^u_n$ exists, and

\begin{align}\label{3-18}
\bar{u}(x,\tau)=\inf_{y\in M}\big(u(y)+h_{0,\tau}(y,x)\big)
\end{align}
for all $(x,\tau)\in M\times\mathbb{S}^1$, thus proving the first
assertion of Theorem \ref{th1}.

\subsection{$\lim_{n\to+\infty}U_n^u$, backward weak KAM solutions and viscosity solutions}
Here we discuss the relation among uniform limits
$\lim_{n\to+\infty}U^u_n$, backward weak KAM solutions and
viscosity solutions of (\ref{1-5}). Following Fathi \cite{Fat1},
as done by Contreras et al. in \cite{Con}, we give the definition
of the backward weak KAM solution as follows.

\begin{definition}\label{def3}
A backward weak KAM solution of the Hamilton-Jacobi equation
(\ref{1-5}) is a function $u:M\times\mathbb{S}^1\to\mathbb{R}^1$
such that
\begin{itemize}
    \item [(1)] u is dominated by $L$, i.e.,
               \[
               u(x,\tau)-u(y,s)\leq\Phi_{s,\tau}(y,x), \quad
               \forall (x,\tau),\ (y,s)\in M\times\mathbb{S}^1.
               \]
               We use the notation $u\prec L$.
    \item [(2)] For every $(x,\tau)\in M\times\mathbb{S}^1$ there
               exists a curve $\gamma:(-\infty,\tilde{\tau}]\to M$ with
               $\gamma(\tilde{\tau})=x$ and $[\tilde{\tau}]=\tau$ such that
               \[
               u(x,\tau)-u(\gamma(t),[t])=\int_t^{\tilde{\tau}}L(\gamma(s),\dot{\gamma}(s),s)ds,\quad
               \forall t\in(-\infty,\tilde{\tau}].
               \]
\end{itemize}
\end{definition}
\noindent We denote by $\mathcal{S}_-$ the set of backward weak
KAM solutions. Let us recall two known results \cite{Con} on
backward weak KAM solutions, which will be used later in the
paper.

\begin{lemma}\label{le3-2}
Given a fixed $(y,s)\in M\times\mathbb{S}^1$, the function

\[
(x,\tau)\mapsto h_{s,\tau}(y,x),\quad (x,\tau)\in
M\times\mathbb{S}^1
\]
is a  backward weak KAM solution.
\end{lemma}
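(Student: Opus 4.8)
The plan is to prove the two defining properties of a backward weak KAM solution (Definition \ref{def3}) for the function $v(x,\tau) := h_{s,\tau}(y,x)$, with the point $(y,s)$ fixed.

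\textbf{Domination.} First I would verify that $v \prec L$, i.e. that $v(x,\tau) - v(z,r) \leq \Phi_{r,\tau}(z,x)$ for all $(x,\tau),(z,r) \in M \times \mathbb{S}^1$. Unwinding the definitions, this is exactly the triangle-type inequality
\[
h_{s,\tau}(y,x) \leq h_{s,r}(y,z) + \Phi_{r,\tau}(z,x),
\]
which is precisely inequality (\ref{3-3}) (with the relabeling $\tau \leftrightarrow s$, $s\leftrightarrow r$, $t\leftrightarrow \tau$, $x\leftrightarrow y$, $z\leftrightarrow z$, $y \leftrightarrow x$). So this half is essentially free, once one carefully matches up the notation. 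I should also note that $v$ is continuous (indeed Lipschitz), which follows from the stated Lipschitz property of the extended Peierls barrier, so that $v$ is a legitimate element of $C(M\times\mathbb{S}^1,\mathbb{R}^1)$.

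\textbf{Calibrated curve.} The substantive part is to produce, for each fixed $(x,\tau) \in M\times\mathbb{S}^1$, a curve $\gamma:(-\infty,\tilde\tau]\to M$ with $\gamma(\tilde\tau)=x$, $[\tilde\tau]=\tau$, along which $v$ is calibrated:
\[
v(x,\tau) - v(\gamma(t),[t]) = \int_t^{\tilde\tau} L(\gamma(\sigma),\dot\gamma(\sigma),\sigma)\,d\sigma, \quad \forall\, t\le\tilde\tau.
\]
Here I would use the definition of $h_{s,\tau}(y,x) = \liminf_{t'-t\to+\infty} F_{t,t'}(y,x)$ over $[t]=s$, $[t']=\tau$: choose a sequence $t_i \to -\infty$ with $\tilde\tau - t_i \to +\infty$, $[\tilde\tau]=\tau$ fixed, $[t_i] = s$, and with $F_{t_i,\tilde\tau}(y,x) \to h_{s,\tau}(y,x)$, and let $\gamma_i:[t_i,\tilde\tau]\to M$ be the Tonelli minimizers (Proposition-type existence, Tonelli's theorem) realizing $F_{t_i,\tilde\tau}(y,x)$. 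By the a priori compactness Lemma \ref{le3-1}, these minimizers and their velocities stay in a fixed compact subset of $TM\times\mathbb{S}^1$ on any bounded time window; hence by Arzel\`a--Ascoli and a diagonal argument, a subsequence of $(\gamma_i,\dot\gamma_i)$ converges $C^1$ on compact subsets of $(-\infty,\tilde\tau]$ to a curve $\gamma$ with $\gamma(\tilde\tau)=x$, which is a solution of the Euler--Lagrange equation (a minimizing extremal), defined on all of $(-\infty,\tilde\tau]$.

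\textbf{Passing the calibration to the limit.} Then I would check the calibration identity for $\gamma$. For fixed $t\le\tilde\tau$ and $i$ large, split $\gamma_i$ at time $t$ into $\gamma_i|_{[t_i,t]}$ and $\gamma_i|_{[t,\tilde\tau]}$; minimality gives
\[
F_{t_i,\tilde\tau}(y,x) = F_{t_i,t}(y,\gamma_i(t)) + F_{t,\tilde\tau}(\gamma_i(t),x) \ge \Phi_{s,[t]}(y,\gamma_i(t)) + F_{t,\tilde\tau}(\gamma_i(t),x),
\]
using that $F_{t_i,t} \ge \Phi_{s,[t]}$ when $t - t_i \ge 1$. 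Letting $i\to\infty$, using continuity of $\Phi$ and $F_{t,\tilde\tau}(\cdot,x)$ and $\gamma_i(t)\to\gamma(t)$, and combining with the reverse inequality coming from (\ref{3-3}) / the definition of $h$, I expect to land on
\[
h_{s,\tau}(y,x) = h_{s,[t]}(y,\gamma(t)) + F_{t,\tilde\tau}(\gamma(t),x) = h_{s,[t]}(y,\gamma(t)) + \int_t^{\tilde\tau} L(\gamma,\dot\gamma,\sigma)\,d\sigma,
\]
where $F_{t,\tilde\tau}(\gamma(t),x)$ equals the action of $\gamma$ on $[t,\tilde\tau]$ because $\gamma$ is a limit of minimizers hence itself minimizing on bounded windows. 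This rearranges to exactly the required identity $v(x,\tau) - v(\gamma(t),[t]) = \int_t^{\tilde\tau} L(\gamma,\dot\gamma,\sigma)\,d\sigma$.

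\textbf{Main obstacle.} The delicate point is the limiting argument that produces a curve defined on the \emph{whole} half-line $(-\infty,\tilde\tau]$ and that the $\liminf$-defining near-minimizers actually transfer their near-optimality to every finite sub-window in the limit — i.e. controlling that no action "escapes" and that the identity holds for all $t$ simultaneously rather than just along a subsequence depending on $t$. This is handled by the a priori compactness (Lemma \ref{le3-1}) together with a diagonal extraction over an exhausting sequence of windows, plus the standard fact that a $C^1$-limit of Tonelli minimizers is a minimizer; this is the analogue in the time-periodic setting of Fathi's construction of calibrated curves, and the reference \cite{Con} is presumably invoked here, so in the actual write-up much of this may be quoted rather than reproved.
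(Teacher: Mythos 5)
The paper does not actually prove this lemma: it is quoted as a known result from Contreras--Iturriaga--S\'anchez Morgado \cite{Con}, so there is no internal proof to compare yours against. Your outline (domination from (\ref{3-3}); calibrated curve as a limit of Tonelli minimizers realizing the liminf, via the a priori compactness of Lemma \ref{le3-1}, Arzel\`a--Ascoli and a diagonal extraction) is the standard construction, and the domination half is indeed exactly inequality (\ref{3-3}) after relabeling.

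There is, however, one concrete soft spot in the calibration step. After splitting $\gamma_i$ at time $t$ you bound $F_{t_i,t}(y,\gamma_i(t))$ from below by $\Phi_{s,[t]}(y,\gamma_i(t))$ and pass to the limit, obtaining
\[
h_{s,\tau}(y,x)\ \ge\ \Phi_{s,[t]}(y,\gamma(t))+F_{t,\tilde\tau}(\gamma(t),x),
\]
while the reverse inequality from (\ref{3-3}) reads
\[
h_{s,\tau}(y,x)\ \le\ h_{s,[t]}(y,\gamma(t))+\Phi_{[t],\tau}(\gamma(t),x)\ \le\ h_{s,[t]}(y,\gamma(t))+F_{t,\tilde\tau}(\gamma(t),x).
\]
By (\ref{3-2}) one only has $\Phi_{s,[t]}\le h_{s,[t]}$, with strict inequality possible off the Aubry set, so these two bounds do not sandwich to the equality $h_{s,\tau}(y,x)=h_{s,[t]}(y,\gamma(t))+F_{t,\tilde\tau}(\gamma(t),x)$ that you need. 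The fix is to keep the liminf structure instead of dropping to the action potential: since $t-t_i\to+\infty$ with $[t_i]=s$, the definition (\ref{3-1}) of the extended Peierls barrier gives $\liminf_{i}F_{t_i,t}(y,z)\ge h_{s,[t]}(y,z)$ for a \emph{fixed} endpoint $z$, and the Lipschitz property of $F_{t_1,t_2}$ in its endpoints with constant $D$ independent of the time interval (as used in (\ref{3-14})) lets you replace $\gamma_i(t)$ by $\gamma(t)$, yielding $\liminf_i F_{t_i,t}(y,\gamma_i(t))\ge h_{s,[t]}(y,\gamma(t))$. With that corrected lower bound the two inequalities do close up (for $t\le\tilde\tau-1$, and then for all $t$ by additivity of the action along $\gamma$); the rest of your argument --- the compactness, the diagonal extraction, and the fact that a $C^1$-limit of minimizers is minimizing on bounded windows --- is sound.
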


\begin{lemma}\label{le3-3}
If $\mathcal{U}\subset \mathcal{S}_-$, let
$\underline{u}(x,\tau):=\inf_{u\in\mathcal{U}}u(x,\tau)$ then
either $\underline{u}\equiv-\infty$ or
$\underline{u}\in\mathcal{S}_-$.
\end{lemma}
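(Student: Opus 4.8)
The plan is to show that, outside the trivial case $\underline u\equiv-\infty$, the function $\underline u$ satisfies both clauses of Definition \ref{def3}. First I would check $\underline u\prec L$ together with the dichotomy. For every $u\in\mathcal U$ one has $u(x,\tau)\le u(y,s)+\Phi_{s,\tau}(y,x)$; since $\Phi_{s,\tau}(y,x)$ is finite and independent of $u$, taking $\inf_{u\in\mathcal U}$ on both sides gives $\underline u(x,\tau)\le\underline u(y,s)+\Phi_{s,\tau}(y,x)$, so $\underline u\prec L$. Consequently, if $\underline u(y_0,s_0)=-\infty$ at one point, then $\underline u(x,\tau)\le\underline u(y_0,s_0)+\Phi_{s_0,\tau}(y_0,x)=-\infty$ for all $(x,\tau)$, i.e.\ $\underline u\equiv-\infty$; otherwise $\underline u$ is finite everywhere and, being an infimum of equi-Lipschitz functions (backward weak KAM solutions are Lipschitz with a constant depending only on $L$), is itself Lipschitz, hence continuous. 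From now on assume $\underline u\not\equiv-\infty$; it remains to produce, for each $(x,\tau)\in M\times\mathbb S^1$, a backward calibrated curve for $\underline u$ ending at $(x,\tau)$.

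Next I would set up a compactness argument. Fix $(x,\tau)$ and a lift $\tilde\tau\in\mathbb R$ with $[\tilde\tau]=\tau$, and pick $u_n\in\mathcal U$ with $u_n(x,\tau)\to\underline u(x,\tau)$. By Definition \ref{def3}(2) there are backward calibrated curves $\gamma_n:(-\infty,\tilde\tau]\to M$ for $u_n$ with $\gamma_n(\tilde\tau)=x$; since $u_n\prec L$, each restriction $\gamma_n|_{[\tilde\tau-N,\tilde\tau]}$ ($N\in\mathbb N$, $N\ge1$) is a minimizing extremal, so by Lemma \ref{le3-1} (with $t=1$) all the lifts $(\gamma_n(s),\dot\gamma_n(s),[s])$, $s\in[\tilde\tau-N,\tilde\tau]$, lie in the fixed compact set $\mathcal C_1\subset TM\times\mathbb S^1$. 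Thus the $\gamma_n$ are uniformly Lipschitz on each $[\tilde\tau-N,\tilde\tau]$, with uniformly bounded accelerations (they solve the second order Euler--Lagrange equation), and Arzel\`a--Ascoli together with a diagonal extraction yields a subsequence converging, in the $C^1$ topology on compact subintervals of $(-\infty,\tilde\tau]$, to a curve $\gamma:(-\infty,\tilde\tau]\to M$ with $\gamma(\tilde\tau)=x$, again an Euler--Lagrange solution and minimizing on each finite subinterval.

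Finally I would pass the calibration identity to the limit. Fix $t\le\tilde\tau$. By $C^1$-convergence on $[t,\tilde\tau]$ and continuity of $L$ we get $\int_t^{\tilde\tau}L(\gamma_n,\dot\gamma_n,s)\,ds\to\int_t^{\tilde\tau}L(\gamma,\dot\gamma,s)\,ds$, so by the calibration of $u_n$ along $\gamma_n$,
\[
u_n(\gamma_n(t),[t])=u_n(x,\tau)-\int_t^{\tilde\tau}L(\gamma_n,\dot\gamma_n,s)\,ds\ \longrightarrow\ \underline u(x,\tau)-\int_t^{\tilde\tau}L(\gamma,\dot\gamma,s)\,ds.
\]
Since $u_n\ge\underline u$ and $\underline u$ is continuous, also $u_n(\gamma_n(t),[t])\ge\underline u(\gamma_n(t),[t])\to\underline u(\gamma(t),[t])$, whence $\underline u(x,\tau)-\underline u(\gamma(t),[t])\ge\int_t^{\tilde\tau}L(\gamma,\dot\gamma,s)\,ds$ for all $t\le\tilde\tau$. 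The reverse inequality is $\underline u\prec L$ read along $\gamma$, using the standard equivalence (cf.\ \cite{Con}) of Definition \ref{def3}(1) with the inequality $\underline u(\zeta(b),[b])-\underline u(\zeta(a),[a])\le\int_a^bL(\zeta,\dot\zeta,s)\,ds$ for all continuous piecewise $C^1$ curves $\zeta$. Hence $\underline u(x,\tau)-\underline u(\gamma(t),[t])=\int_t^{\tilde\tau}L(\gamma,\dot\gamma,s)\,ds$ for all $t\le\tilde\tau$, so $\gamma$ is backward calibrated for $\underline u$ and $\underline u\in\mathcal S_-$.

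The step I expect to demand the most care is the passage to the limit in the two displayed constructions above: ensuring that the family $\{\gamma_n\}$ of calibrated curves is precompact in the $C^1_{\mathrm{loc}}$ sense on $(-\infty,\tilde\tau]$ with a limit that is itself an admissible (Euler--Lagrange, minimizing) curve — this is exactly where the a priori compactness of Lemma \ref{le3-1} and the fact that calibrated curves of weak KAM solutions are minimizing extremals enter — and, in verifying the calibration identity near $t=\tilde\tau$, handling time spans shorter than the period, which forces the use of the ``all curves'' form of domination rather than working with $\Phi$ directly.
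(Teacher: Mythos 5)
The paper offers no proof of this lemma: it is stated as a known result imported from \cite{Con}, so there is nothing internal to compare against. Your argument is the standard one and is essentially sound: domination passes to the infimum because $\Phi_{s,\tau}(y,x)$ is finite and independent of $u$, which also yields the dichotomy; equi-Lipschitzness of dominated functions gives continuity of $\underline u$ in the nondegenerate case; and clause (2) of Definition \ref{def3} is obtained by taking a minimizing sequence $u_n(x,\tau)\to\underline u(x,\tau)$, extracting a $C^1_{\mathrm{loc}}$ limit of the associated calibrated curves via Lemma \ref{le3-1} and Arzel\`a--Ascoli, and passing the calibration identity to the limit using $u_n\ge\underline u$.

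The one imprecise point is exactly the one you flag at the end: the reverse inequality for $\tilde\tau-1<t\le\tilde\tau$. Definition \ref{def3}(1) is \emph{not} by itself equivalent to the all-curves inequality $u(\zeta(b),[b])-u(\zeta(a),[a])\le\int_a^b L\,ds$ for arbitrary $a<b$: the action potential is defined only over time spans $t'-t\ge 1$, so (1) yields $\Phi_{[a],[b]}(\zeta(a),\zeta(b))\le F_{a,b+1}(\zeta(a),\zeta(b))$ rather than $\le\int_a^bL(\zeta,\dot\zeta,s)\,ds$ when $b-a<1$, and these are not comparable in general. The repair is short and uses clause (2) as well: for each $u\in\mathcal U$ and each curve $\zeta:[a,b]\to M$, prepend to $\zeta$ the restriction to $[a-1,a]$ of a curve calibrating $u$ at $(\zeta(a),[a])$; applying (1) to the concatenation (whose time span exceeds $1$) and subtracting the calibration identity on $[a-1,a]$ gives $u(\zeta(b),[b])-u(\zeta(a),[a])\le\int_a^bL(\zeta,\dot\zeta,s)\,ds$ for every span. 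This inequality is linear in $u$ on each side, so it passes to the infimum over $\mathcal U$ exactly as in your first step, and then your calibration argument closes for all $t\le\tilde\tau$. With that substitution the proof is complete.
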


We define the projected Aubry set $\mathcal{A}_0$ as follows:

\[
\mathcal{A}_0:=\{(x,\tau)\in M\times\mathbb{S}^1\ |\
h_{\tau,\tau}(x,x)=0\}.
\]
Note that $\mathcal{A}_0=\Pi\tilde{\mathcal{A}_0}$, where
$\Pi:TM\times\mathbb{S}^1\to M\times\mathbb{S}^1$ denotes the
projection and $\tilde{\mathcal{A}_0}$ denotes the Aubry set in
$TM\times\mathbb{S}^1$, i.e., the union of global static orbits.
See for instance \cite{Ber} for the definition of static orbits
and more details on $\tilde{\mathcal{A}_0}$.

From the definition of $\mathcal{A}_0$, (\ref{3-2}) and
(\ref{3-3}), it is straightforward to show that if
$(x,\tau)\in\mathcal{A}_0$, then

\begin{align}\label{3-19}
h_{\tau,s}(x,y)=\Phi_{\tau,s}(x,y)
\end{align}
for all $(y,s)\in M\times\mathbb{S}^1$. Define an equivalence
relation on $\mathcal{A}_0$ by saying that $(x,\tau)$ and $(y,s)$
are equivalent if and only if

\begin{align}\label{3-20}
\Phi_{\tau,s}(x,y)+\Phi_{s,\tau}(y,x)=0.
\end{align}
By (\ref{3-19}), it is simple to see that (\ref{3-20}) is
equivalent to

\[
h_{\tau,s}(x,y)+h_{s,\tau}(y,x)=0.
\]
The equivalent classes of this relation are called static classes.
Let $\mathrm A$ be the set of static classes. For each static
class $\Gamma\in \mathrm A$ choose a point $(x,0)\in\Gamma$ and
let $\mathbb{A}_0$ be the set of such points.

Contreras et al. \cite{Con} characterize  backward weak KAM
solutions of the Hamilton-Jacobi equation (\ref{1-5}) in terms of
their values at each static class and the extended Peierls
barrier. See \cite{Con01} for similar results in the
time-independent case.

\begin{theorem}[Contreras et al. \cite{Con}]\label{th4}
The map $\{f:\mathbb{A}_0\to\mathbb{R}^1\ |\ f\prec
L\}\to\mathcal{S}_-$

\[
f\mapsto
u_f(x,\tau)=\min_{(p,0)\in\mathbb{A}_0}(f(p,0)+h_{0,\tau}(p,x))
\]
is a bijection.
\end{theorem}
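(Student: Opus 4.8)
The plan is to verify the three ingredients of a bijection: that $u_f$ actually lies in $\mathcal{S}_-$ whenever $f\prec L$ (well-definedness), that $f\mapsto u_f$ is injective, and that it is surjective. Well-definedness is the soft part. For each fixed $(p,0)\in\mathbb{A}_0$ the function $(x,\tau)\mapsto h_{0,\tau}(p,x)$ is a backward weak KAM solution by Lemma~\ref{le3-2}, and since both conditions of Definition~\ref{def3} are invariant under adding a constant, $(x,\tau)\mapsto f(p,0)+h_{0,\tau}(p,x)$ again lies in $\mathcal{S}_-$. The hypothesis $f\prec L$ forces $f$ to be bounded on $\mathbb{A}_0$ (from $f(p,0)-f(q,0)\le\Phi_{0,0}(q,p)$ and continuity of $\Phi_{0,0}$ on the compact $M\times M$), so the infimum defining $u_f$ is finite; Lemma~\ref{le3-3} then gives $u_f\in\mathcal{S}_-$, and finiteness of the set $\mathrm{A}$ of static classes makes this infimum a minimum.

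For injectivity I would prove the retraction identity $u_f(p,0)=f(p,0)$ for every $(p,0)\in\mathbb{A}_0$. Taking $q=p$ in the defining minimum and using $h_{0,0}(p,p)=0$ (valid since $(p,0)\in\mathcal{A}_0$) gives $u_f(p,0)\le f(p,0)$. Conversely, for any $(q,0)\in\mathbb{A}_0$ one has $h_{0,0}(q,p)=\Phi_{0,0}(q,p)$ by (\ref{3-19}), and $f\prec L$ gives $f(p,0)-f(q,0)\le\Phi_{0,0}(q,p)$, whence $f(q,0)+h_{0,0}(q,p)\ge f(p,0)$; minimizing over $q$ yields $u_f(p,0)\ge f(p,0)$. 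Thus $f=u_f|_{\mathbb{A}_0}$, so $u_f=u_g$ forces $f=g$.

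Surjectivity is the heart of the matter. Given $u\in\mathcal{S}_-$, set $f:=u|_{\mathbb{A}_0}$; then $f\prec L$ is immediate from $u\prec L$, and it remains to show $u=u_f$. The inequality $u(x,\tau)\le u_f(x,\tau)$ holds for any dominated $u$: for each $(p,0)$, $u(x,\tau)-u(p,0)\le\Phi_{0,\tau}(p,x)\le h_{0,\tau}(p,x)$ by (\ref{3-2}). For the reverse I would first establish the stronger representation $u(x,\tau)=\inf_{(y,s)\in\mathcal{A}_0}\big(u(y,s)+h_{s,\tau}(y,x)\big)$. Take the calibrating curve $\gamma:(-\infty,\tilde\tau]\to M$ from Definition~\ref{def3}(2); by Lemma~\ref{le3-1} the lift $(\gamma(s),\dot\gamma(s),[s])$ stays in a compact set for $s\le\tilde\tau$, so its $\alpha$-limit set is nonempty, and being the $\alpha$-limit of a negatively calibrated (semi-static) orbit it is contained in $\tilde{\mathcal{A}}_0$ (standard weak KAM theory, see \cite{Fat-b,Con}). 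Pick $(q,s_0)$ in the projected $\alpha$-limit and $t_n\to-\infty$ with $\gamma(t_n)\to q$, $[t_n]\to s_0$; the calibration gives $u(x,\tau)=u(\gamma(t_n),[t_n])+F_{t_n,\tilde\tau}(\gamma(t_n),x)$, and using the Lipschitz bound for $F_{t,t'}$ with constant independent of $t,t'$ (3.3 Lemma of \cite{Ber}), together with small time-shifts aligning the initial endpoint to the section $s_0$, the liminf definition (\ref{3-1}) gives $\liminf_n F_{t_n,\tilde\tau}(\gamma(t_n),x)\ge h_{s_0,\tau}(q,x)$; hence $u(x,\tau)\ge u(q,s_0)+h_{s_0,\tau}(q,x)$, and combined with the reverse inequality (which is again $u\prec L$ plus (\ref{3-2})) this is an equality. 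Finally I would collapse the infimum over $\mathcal{A}_0$ to one over $\mathbb{A}_0$: if $(y,s)\in\mathcal{A}_0$ lies in the static class of $(p,0)\in\mathbb{A}_0$, then $h_{0,s}(p,y)=\Phi_{0,s}(p,y)=-\Phi_{s,0}(y,p)=-h_{s,0}(y,p)$ by (\ref{3-19}) applied at both points and the static-class relation (\ref{3-20}), so $u\prec L$ gives $u(p,0)+h_{0,s}(p,y)\le u(y,s)$, and then by (\ref{3-3}) and (\ref{3-2}), $u(p,0)+h_{0,\tau}(p,x)\le u(p,0)+h_{0,s}(p,y)+\Phi_{s,\tau}(y,x)\le u(y,s)+h_{s,\tau}(y,x)$. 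Taking infima gives $u_f(x,\tau)\le u(x,\tau)$, so $u=u_f$, and surjectivity follows.

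The main obstacle is the $\ge$ direction of the representation formula in the surjectivity step: one must know that the $\alpha$-limit of a backward-calibrated curve sits in the Aubry set, and then pass to the limit along $\gamma(t_n)$ while both the initial point and the initial time-section drift — controlling $F_{t_n,\tilde\tau}(\gamma(t_n),x)$ in this regime is exactly where the a priori compactness of Lemma~\ref{le3-1}, the time-uniform Lipschitz constant for $F$ from \cite{Ber}, and the liminf structure of the extended Peierls barrier must all be combined carefully. Everything else (the retraction identity, the static-class collapse) is a chain of the already-recorded inequalities (\ref{3-2}), (\ref{3-3}), (\ref{3-19}), (\ref{3-20}) and the domination property.
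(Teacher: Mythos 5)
The paper does not prove this theorem: it is imported verbatim from Contreras--Iturriaga--S\'anchez Morgado \cite{Con}, so there is no in-paper argument to compare yours against. Judged on its own terms, your reconstruction follows the standard weak KAM route and is essentially sound: the retraction identity $u_f|_{\mathbb{A}_0}=f$ via $h_{0,0}(p,p)=0$ and (\ref{3-19}) correctly gives injectivity; and for surjectivity the chain ``representation over all of $\mathcal{A}_0$ via the $\alpha$-limit of the calibrating curve, then collapse to one representative per static class via (\ref{3-19})--(\ref{3-20}) and (\ref{3-3})'' is exactly the argument in \cite{Con} (and in \cite{Con01} for the autonomous case). The passage to the limit along $\gamma(t_n)$, using the a priori compactness of Lemma \ref{le3-1} and the time-uniform Lipschitz constant for $F_{t,t'}$ to convert the liminf in (\ref{3-1}) into the bound $\liminf_nF_{t_n,\tilde\tau}(\gamma(t_n),x)\geq h_{s_0,\tau}(q,x)$, is handled correctly. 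You are right that the one genuinely nontrivial input is that the $\alpha$-limit set of a backward calibrated curve lies in $\tilde{\mathcal{A}}_0$; citing this is acceptable here, but be aware it is a theorem in its own right, not a formality.

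Two corrections. First, your claim that ``finiteness of the set $\mathrm{A}$ of static classes makes this infimum a minimum'' is false: the set of static classes need not be finite (it can be uncountable). The attainment of the minimum instead comes out of your own surjectivity computation: for $u\in\mathcal{S}_-$ the point $(p,0)\in\mathbb{A}_0$ produced by the $\alpha$-limit argument realizes the infimum, and since every $f\prec L$ equals $u_f|_{\mathbb{A}_0}$ with $u_f\in\mathcal{S}_-$, this covers the general case; alternatively one works with $\inf$ throughout. Second, in the collapse step you should say a word about why the static class of the $\alpha$-limit point $(q,s_0)$ contains a representative in the time-$0$ section at all; this holds because static classes are unions of projected orbits of the flow and hence meet every time section, but it is used implicitly.
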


\begin{proposition}\label{pr3-5}
\[
\{\bar{u}\in C(M\times\mathbb{S}^1,\mathbb{R}^1)\ |\ \exists\ u\in
C(M,\mathbb{R}^1),\
\bar{u}=\lim_{n\to+\infty}U_n^u\}=\mathcal{S}_-.
\]
\end{proposition}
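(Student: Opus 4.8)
The statement asserts that the set of uniform limits $\lim_{n\to+\infty}U_n^u$, as $u$ ranges over $C(M,\mathbb{R}^1)$, coincides exactly with $\mathcal{S}_-$, the set of backward weak KAM solutions of \eqref{1-5}. I would prove this by establishing the two inclusions separately, using the explicit formula \eqref{3-18} for $\bar{u}=\lim_{n\to+\infty}U_n^u$, namely $\bar{u}(x,\tau)=\inf_{y\in M}(u(y)+h_{0,\tau}(y,x))$, which is already available from the previous subsection.

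\textbf{The inclusion $\subseteq$.} Fix $u\in C(M,\mathbb{R}^1)$ and set $\bar{u}(x,\tau)=\inf_{y\in M}(u(y)+h_{0,\tau}(y,x))$. For each fixed $y\in M$, Lemma \ref{le3-2} tells us that $(x,\tau)\mapsto h_{0,\tau}(y,x)$ is a backward weak KAM solution, and adding the constant $u(y)$ preserves this (a quick check against Definition \ref{def3}: domination is unaffected by additive constants, and the calibrated curve is the same). Thus $\bar{u}$ is an infimum over the family $\mathcal{U}=\{(x,\tau)\mapsto u(y)+h_{0,\tau}(y,x)\ |\ y\in M\}\subset\mathcal{S}_-$. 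By Lemma \ref{le3-3}, either $\bar{u}\equiv-\infty$ or $\bar{u}\in\mathcal{S}_-$; since $\bar{u}$ is a uniform limit of continuous real-valued functions $U_n^u$ on the compact space $M\times\mathbb{S}^1$, it is finite, so $\bar{u}\in\mathcal{S}_-$.

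\textbf{The inclusion $\supseteq$.} Conversely, let $w\in\mathcal{S}_-$ be an arbitrary backward weak KAM solution; I must produce $u\in C(M,\mathbb{R}^1)$ with $\lim_{n\to+\infty}U_n^u=w$, i.e., with $w(x,\tau)=\inf_{y\in M}(u(y)+h_{0,\tau}(y,x))$ for all $(x,\tau)$. The natural candidate is $u(x):=w(x,0)$, which is continuous on $M$. It then suffices to show $w(x,\tau)=\inf_{y\in M}(w(y,0)+h_{0,\tau}(y,x))$. The inequality ``$\leq$'' follows from domination $w\prec L$ together with \eqref{3-2}: for any $y$, $w(x,\tau)-w(y,0)\leq\Phi_{0,\tau}(y,x)\leq h_{0,\tau}(y,x)$. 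For ``$\geq$'', I would use part (2) of Definition \ref{def3}: there is a calibrated curve $\gamma:(-\infty,\tilde\tau]\to M$ with $\gamma(\tilde\tau)=x$, $[\tilde\tau]=\tau$, and $w(x,\tau)-w(\gamma(t),[t])=\int_t^{\tilde\tau}L\,ds$ for all $t$. Evaluating along a sequence $t_j\to-\infty$ with $[t_j]=0$ and using that $\int_{t_j}^{\tilde\tau}L\,ds\geq F_{0,\tau+k_j}(\gamma(t_j),x)$ for the appropriate integer $k_j$ (after noting $\tilde\tau-t_j\to+\infty$), one gets $w(x,\tau)\geq w(\gamma(t_j),0)+F_{0,\tau+k_j}(\gamma(t_j),x)$, and passing to the liminf gives $w(x,\tau)\geq\liminf_{t_j-t'\to+\infty}(w(\gamma(t_j),0)+F)\geq\inf_y(w(y,0)+h_{0,\tau}(y,x))$ — here I rely on continuity/uniformity of $F$ versus its liminf $h$ from Proposition \ref{pr3-4} and compactness of $M$ to extract the barrier. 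This verifies the formula, hence $w=\lim_n U_n^{w(\cdot,0)}$.

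\textbf{Main obstacle.} The delicate point is the ``$\geq$'' direction in the second inclusion: turning the calibration identity along the infinite backward curve into the barrier $h_{0,\tau}$ cleanly. One must be careful that the times $t$ with $[t]=0$ along which we evaluate tend to $-\infty$ (so the ``$t'-t\to+\infty$'' condition in the definition \eqref{3-1} of $h$ is met), that the starting points $\gamma(t_j)$ can be taken to converge (by compactness of $M$) to some $y_*$, and that $\liminf_j F_{0,\tau+k_j}(\gamma(t_j),x)\geq h_{0,\tau}(y_*,x)$ — this last step uses the Lipschitz continuity of $F_{t,t'}$ uniformly in $t,t'$ (from \cite[3.3 LEMMA]{Ber}, invoked in the proof of Proposition \ref{pr3-4}) to replace $\gamma(t_j)$ by $y_*$ up to a vanishing error, and then the definition of $h$ as a liminf. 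Everything else is a routine assembly of Lemmas \ref{le3-2}, \ref{le3-3} and the domination inequalities \eqref{3-2}, \eqref{3-3}.
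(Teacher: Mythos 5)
Your proof is correct, and while your first inclusion is identical to the paper's (formula \eqref{3-18} plus Lemmas \ref{le3-2} and \ref{le3-3}), your second inclusion takes a genuinely different route. The paper invokes the representation theorem of Contreras et al.\ (Theorem \ref{th4}) to write $w(x,\tau)=\min_{(p,0)\in\mathbb{A}_0}(f(p,0)+h_{0,\tau}(p,x))$, then inserts the identity $h_{0,\tau}(p,x)=\min_{y\in M}\bigl(h_{0,0}(p,y)+h_{0,\tau}(y,x)\bigr)$ valid for $(p,0)\in\mathbb{A}_0$ (a consequence of \eqref{3-3} and $h_{0,0}(p,p)=0$) and exchanges the two minima to arrive at $w(x,\tau)=\min_{y\in M}(w(y,0)+h_{0,\tau}(y,x))$. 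You instead verify this same formula directly from Definition \ref{def3}: domination together with \eqref{3-2} gives ``$\leq$'', and the calibrated curve evaluated at times $t_j=\tilde\tau-\tau-j$ (so that $[t_j]=0$ and $\tilde\tau-t_j=\tau+k_j$), combined with compactness of $M$ to extract $\gamma(t_j)\to y_*$, the $k$-uniform Lipschitz constant of $F_{0,\tau+k}$, and the liminf definition \eqref{3-1} of $h$, gives ``$\geq$''. Your argument is more self-contained — it does not rest on the (preprint) characterization of $\mathcal{S}_-$ by values on static classes — at the price of the slightly delicate limit extraction you correctly flag as the main obstacle; the paper's argument is shorter once Theorem \ref{th4} is granted and makes the role of the static classes explicit. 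Both are valid.
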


\begin{remark}
Proposition \ref{pr3-5} tells us two things: (i) For each $u\in
C(M,\mathbb{R}^1)$, $\bar{u}=\lim_{n\to+\infty}U_n^u$ is a
backward weak KAM solution of (\ref{1-5}), which proves the second
assertion of Theorem \ref{th1}. (ii) For each $w\in\mathcal{S}_-$
there exists $w_0\in C(M,\mathbb{R}^1)$ such that
$w=\lim_{n\to+\infty}U_n^{w_0}$. Moreover, we know from the proof
of Proposition \ref{pr3-5} that $w_0(x)=w(x,0)$ for all $x\in M$.
\end{remark}

\begin{proof}
First we show that for each $u\in C(M,\mathbb{R}^1)$,
$\bar{u}=\lim_{n\to+\infty}U^u_n$ is a backward weak KAM solution
of (\ref{1-5}). By (\ref{3-18}) we have

\[
\bar{u}(x,\tau)=\inf_{y\in M}\big(u(y)+h_{0,\tau}(y,x)\big)
\]
for all $(x,\tau)\in M\times\mathbb{S}^1$. Combining Lemmas
\ref{le3-2} and \ref{le3-3} we get that $\bar{u}\in\mathcal{S}_-$.

Then we prove that for each $w\in\mathcal{S}_-$, there exists
$w_0\in C(M,\mathbb{R}^1)$ such that
$w=\lim_{n\to+\infty}U^{w_0}_n$. From Theorem \ref{th4} there
exists $f:\mathbb{A}_0\to\mathbb{R}^1$ with $f\prec L$ such that
for each $(x,\tau)\in M\times\mathbb{S}^1$,

\begin{align*}
w(x,\tau) & =\min_{(p,0)\in\mathbb{A}_0}\big(f(p,0)+h_{0,\tau}(p,x)\big)\\
          & =\min_{(p,0)\in\mathbb{A}_0}\Big(f(p,0)+\min_{y\in
          M}\big(h_{0,0}(p,y)+h_{0,\tau}(y,x)\big)\Big)\\
          & =\min_{y\in
          M}\Big(\min_{(p,0)\in\mathbb{A}_0}\big(f(p,0)+h_{0,0}(p,y)\big)+h_{0,\tau}(y,x)\Big)\\
          & =\min_{y\in M}\big(w(y,0)+h_{0,\tau}(y,x)\big).
\end{align*}
Let $w_0(x)=w(x,0)$ for all $x\in M$. Then by Proposition
\ref{pr3-4} and (\ref{3-18}), the uniform limit
$\bar{w}_0=\lim_{n\to+\infty}U^{w_0}_n$ exists and

\[
\bar{w}_0(x,\tau)= \min_{y\in
M}\big(w_0(y)+h_{0,\tau}(y,x)\big)=\min_{y\in
M}\big(w(y,0)+h_{0,\tau}(y,x)\big)
\]
for all $(x,\tau)\in M\times\mathbb{S}^1$. Therefore,
$w=\bar{w}_0=\lim_{n\to+\infty}U^{w_0}_n$.

\end{proof}

\begin{proposition}\label{pr3-6}
Let $u\in C(M\times\mathbb{S}^1,\mathbb{R}^1)$. Then $u$ is a
backward weak KAM solution of (\ref{1-5}) if and only if it is a
viscosity solution of (\ref{1-5}).
\end{proposition}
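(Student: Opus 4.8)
The plan is to prove the equivalence between backward weak KAM solutions and viscosity solutions of \eqref{1-5} in two directions, following the standard line of argument in weak KAM theory (as in Fathi's book \cite{Fat-b} or the time-periodic treatment in \cite{Con}). First I would recall that a viscosity solution of \eqref{1-5} is a continuous function $u$ on $M\times\mathbb{S}^1$ that is simultaneously a viscosity subsolution and a viscosity supersolution, and that for the equation $u_\tau+H(x,u_x,\tau)=0$ there is an equivalent characterization of subsolutions in terms of domination: $u$ is a viscosity subsolution if and only if $u\prec L$, i.e. $u(x,\tau)-u(y,s)\le\Phi_{s,\tau}(y,x)$ for all $(x,\tau),(y,s)$. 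This is a known fact (it amounts to saying that a Lipschitz $u$ satisfying $u_\tau+H(x,u_x,\tau)\le 0$ a.e. is the same as $u$ being dominated by $L$, via integration along curves and the Legendre transform inequality $\langle p,v\rangle-L(x,v,t)\le H(x,p,t)$). Thus condition (1) in Definition \ref{def3} already matches the subsolution half of being a viscosity solution.

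The substantive content is then to show that, given $u\prec L$, condition (2) of Definition \ref{def3} — the existence, through every point, of a backward calibrated curve — is equivalent to $u$ being a viscosity supersolution. For the direction ``weak KAM solution $\Rightarrow$ viscosity solution'', I would take a test function $\phi$ of class $C^1$ with $\phi\ge u$ and $\phi(x_0,\tau_0)=u(x_0,\tau_0)$, pick the calibrated curve $\gamma:(-\infty,\tilde\tau_0]\to M$ given by (2) with $\gamma(\tilde\tau_0)=x_0$, $[\tilde\tau_0]=\tau_0$, and differentiate the inequality $\phi(x_0,\tau_0)-\phi(\gamma(t),[t])\ge u(x_0,\tau_0)-u(\gamma(t),[t])=\int_t^{\tilde\tau_0}L(\gamma,\dot\gamma,s)\,ds$ at $t=\tilde\tau_0$; this yields $\phi_\tau(x_0,\tau_0)+\langle\phi_x(x_0,\tau_0),\dot\gamma(\tilde\tau_0)\rangle\ge L(x_0,\dot\gamma(\tilde\tau_0),\tau_0)$, hence $\phi_\tau+H(x_0,\phi_x,\tau_0)\ge\langle\phi_x,\dot\gamma\rangle-L(x_0,\dot\gamma,\tau_0)+\phi_\tau\ge 0$ is wrong-signed — so I must instead use the subsolution property for the upper test and the calibrated curve for the lower test. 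Concretely: $u\prec L$ gives the subsolution inequality directly, and the calibrated curve gives, for test functions touching from below, $\phi_\tau(x_0,\tau_0)+H(x_0,\phi_x(x_0,\tau_0),\tau_0)\ge 0$, which is exactly the supersolution inequality. So $u$ is a viscosity solution.

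For the converse ``viscosity solution $\Rightarrow$ weak KAM solution'', I would argue: a viscosity solution is in particular a subsolution, hence Lipschitz and dominated, giving (1). For (2), I would use the standard fact that a viscosity solution coincides with the value function it generates under the Lax-Oleinik evolution — more precisely, one shows that for a viscosity solution $u$ and any $(x_0,\tau_0)$ one can build a backward minimizing curve along which $u$ is calibrated, by taking minimizers of $u(\gamma(t_0))+\int_{t_0}^{\tilde\tau_0}L$ over longer and longer time intervals and extracting a limit curve using the a priori compactness (Lemma \ref{le3-1}); the viscosity supersolution property forces this limiting curve to be exactly calibrating, i.e. $u(x_0,\tau_0)-u(\gamma(t),[t])=\int_t^{\tilde\tau_0}L(\gamma,\dot\gamma,s)\,ds$ for all $t\le\tilde\tau_0$. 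Combined with Proposition \ref{pr3-5} — which identifies the set of limits $\lim_n U_n^u$ with $\mathcal{S}_-$ — one could alternatively observe that both classes are described by the same bijective correspondence of Theorem \ref{th4} with data $f:\mathbb{A}_0\to\mathbb{R}^1$, $f\prec L$, since the viscosity solutions of \eqref{1-5} are also classified this way in \cite{Con}.

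The main obstacle I expect is the ``viscosity $\Rightarrow$ weak KAM'' direction, specifically the construction of the globally calibrated backward curve through an arbitrary point of $M\times\mathbb{S}^1$: one needs the compactness of extremals on long intervals (Lemma \ref{le3-1}) to extract a limit curve, and then a careful argument — exploiting the supersolution inequality together with the semigroup/dynamic-programming identity satisfied by viscosity solutions — to upgrade ``$\le$'' (which is automatic from domination) to the equality defining calibration on the whole half-line $(-\infty,\tilde\tau]$. Everything else (the subsolution $\Leftrightarrow$ domination equivalence, and the differentiation argument at the endpoint of a calibrated curve) is routine once the correct signs and the Legendre duality $H(x,p,t)=\sup_v(\langle p,v\rangle-L(x,v,t))$ are invoked.
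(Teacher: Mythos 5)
Your proposal is correct in substance, but it takes a genuinely different route from the paper. You prove the equivalence ``by hand'' from the definitions: subsolution $\Leftrightarrow$ domination, supersolution from differentiating along the backward calibrated curve at its endpoint, and, for the converse, a direct construction of calibrated curves via Tonelli minimizers on longer and longer intervals plus the a priori compactness of Lemma \ref{le3-1}. The paper instead argues softly through Proposition \ref{pr3-5}: a backward weak KAM solution $u$ equals $\lim_n U_n^{u_0}$ with $u_0=u(\cdot,0)$, each $U_n^{u_0}=T_\tau\circ\tilde T_n u_0$ is a viscosity solution by the standard theory of the Lax--Oleinik evolution, and stability of viscosity solutions under uniform convergence finishes one direction; for the converse it invokes uniqueness for the Cauchy problem to identify $u(x,t)$ with $T_tu_0(x)$, uses $1$-periodicity to conclude $u=U_n^{u_0}$ for every $n$, and applies Proposition \ref{pr3-5} again. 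Your approach is more self-contained on the PDE side but must carry out the calibrated-curve construction that the paper entirely avoids; the paper's approach is shorter but leans on two external black boxes (stability and Cauchy uniqueness for time-periodic Hamilton--Jacobi equations, cf.\ \cite{Lio82,Ber04}). Your parenthetical alternative via Theorem \ref{th4} is in fact closest to the paper's spirit.

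Two small points to tighten. First, in the forward direction your displayed computation with $\phi\ge u$ produces the wrong-signed inequality before you self-correct; in a final write-up simply start with $\phi$ touching $u$ from \emph{below}, which yields $\phi_\tau+\langle\phi_x,\dot\gamma\rangle\ge L$ and hence the supersolution inequality $\phi_\tau+H\ge0$ directly, and keep the subsolution half separate. Second, be careful that the paper's domination $u\prec L$ is defined through $\Phi_{s,\tau}$, whose infimum is restricted to $t'\ge t+1$; the viscosity subsolution property is the \emph{short-time} inequality $u(\gamma(t'),[t'])-u(\gamma(t),[t])\le\int_t^{t'}L$ for all $t'>t$. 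The implication from $\Phi$-domination to short-time domination is not formal: it needs condition (2), by prepending a piece of the backward calibrated curve through $(\gamma(t),[t])$ so that the concatenated curve has length at least $1$ and then cancelling the calibrated part. This is a standard lemma (it is in \cite{Con}), but it should be stated rather than absorbed into ``routine.''
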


\begin{proof}
Let $u\in C(M\times\mathbb{S}^1,\mathbb{R}^1)$ and $u_0(x)=u(x,0)$
for all $x\in M$. If $u$ is a backward weak KAM solution of
(\ref{1-5}), then from Proposition \ref{pr3-5} we have
$u=\lim_{n\to+\infty}U^{u_0}_n$. Recall that

\[
U^{u_0}_n(x,\tau)=\tilde{T}^\tau_nu_0(x)=(T_\tau\circ\tilde{T}_nu_0)(x).
\]
It is a standard result that for each $n\in\mathbb{N}$,
$U^{u_0}_n(x,\tau)=(T_\tau\circ\tilde{T}_nu_0)(x)$ is a viscosity
solution of (\ref{1-5}), see \cite{Fat5} for instance. Since $u$
is the uniform limit of $\{U^{u_0}_n\}_{n=1}^{+\infty}$, then from
the stability of viscosity solution of (\ref{1-5}) \cite{Fat-b},
$u$ is also a viscosity solution of (\ref{1-5}).

Suppose now that $u$ is a viscosity solution of (\ref{1-5}). Let
$U^{u_0}(x,t)=T_tu_0(x)$ for all $(x,t)\in M\times[0,+\infty)$.
Then $U^{u_0}$ is a viscosity solution of (\ref{1-5}) with
$U^{u_0}(x,0)=T_0u_0(x)=u_0(x)$. Since $u$ can be considered as a
1-periodic in time viscosity solution on $M\times[0,+\infty)$ and
the Cauchy Problem

\[
\left\{
        \begin{array}{ll}
        v_t+H(x,v_x,t)=0, & \mathrm{on}\ M\times(0,+\infty),\\
        v(x,0)=u_0(x), & \mathrm{on}\ M
                         \end{array}
                         \right.
\]
is well posed in the viscosity sense (see, for example,
\cite{Lio82} or \cite{Ber04}), then
$u(x,t)=U^{u_0}(x,t)=T_tu_0(x)$ for all $(x,t)\in
M\times[0,+\infty)$. Since $u$ is 1-periodic in time, for each
$(x,\tau)\in M\times[0,1]$ we have

\[
u(x,\tau)=u(x,\tau+k)=\inf_\gamma\{u_0(\gamma(0))+\int_0^{\tau+k}L(\gamma,\dot{\gamma},s)ds\}, \quad \forall k\in\mathbb{N},
\]
where the infimum is taken among the
continuous and piecewise $C^1$ paths $\gamma:[0,\tau+k]\to M$ with
$\gamma(\tau+k)=x$. Hence,

\[
u(x,\tau)=\inf_{k\in\mathbb{N} \atop n\leq k\leq
2n}\inf_{y\in M}\big(u_0(y)+F_{0,\tau+k}(y,x)\big)=U^{u_0}_n(x,\tau),\quad \forall n\in\mathbb{N}.
\]
Then by letting $n\to+\infty$, from Proposition \ref{pr3-5} we have
$u=\lim_{n\to+\infty}U^{u_0}_n\in\mathcal{S}_-$.

\end{proof}

\section{The new L-O operator: time-independent case}

As mentioned in the Introduction, in this section we first discuss
the main properties of the new L-O semigroup associated with $L_a$
and then give the proofs of Theorems \ref{th2} and \ref{th3}. Finally,
we construct an example to show that the new L-O semigroup converges
faster than the L-O semigroup in the sense of order when the Aubry set
 $\tilde{\mathcal{A}}_0$ of the Lagrangian system (\ref{1-6}) is a
quasi-periodic invariant torus with Diophantine frequency vector
$\omega\in\mathcal{D}(\rho,\alpha)$.

\subsection{Main properties of the new L-O semigroup}
Let us recall the definition of the new
L-O operator $\tilde{T}^a_t$ associated with $L_a$.
For each $t\geq0$ and each $u\in C(M,\mathbb{R}^1)$,
\[
\tilde{T}^a_tu(x)=\inf_{t\leq \sigma\leq
2t}\inf_{\gamma}\big\{u(\gamma(0))+\int_0^\sigma L_a(\gamma(s),\dot{\gamma}(s))ds\big\}
\]
for all $x\in M$, where the second infimum is taken among the
continuous and piecewise $C^1$ paths $\gamma:[0,\sigma]\rightarrow M$
with $\gamma(\sigma)=x$.

Obviously, $\tilde{T}^a_tu(x)=\inf_{t\leq \sigma\leq 2t}T^a_\sigma u(x)$. Moreover,
it is straightforward to check that for each $t\geq 0$, $\tilde{T}^a_t$ is an operator from
$C(M,\mathbb{R}^1)$ to itself, and that $\{\tilde{T}^a_t\}_{t\geq 0}$ is a semigroup of operators.

\begin{proposition}\label{pr4-1}
For given $t>0$, $u\in C(M,\mathbb{R}^1)$ and $x\in M$, there
exist $\sigma\in[t,2t]$ and a minimizing extremal curve $\gamma:[0,\sigma]\rightarrow
M$ such that $\gamma(\sigma)=x$ and

\[
\tilde{T}^a_tu(x)=u(\gamma(0))+\int_0^\sigma L_a(\gamma,\dot{\gamma})ds.
\]
\end{proposition}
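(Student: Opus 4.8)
The plan is to reduce the statement to the already-known existence of Tonelli minimizers for fixed time intervals, using the compactness of the parameter interval $[t,2t]$ together with the continuity of the value functions $F_{0,\sigma}$ in $\sigma$. First I would write
\[
\tilde{T}^a_tu(x)=\inf_{\sigma\in[t,2t]}\inf_{y\in M}\big(u(y)+F_{0,\sigma}(y,x)\big),
\]
where $F_{0,\sigma}(y,x)$ denotes the minimal action over continuous piecewise $C^1$ paths from $y$ to $x$ on $[0,\sigma]$ (the time-independent analogue of the quantity introduced in the proof of Proposition~\ref{pr3-1}). The key input is that the map $(\sigma,y)\mapsto F_{0,\sigma}(y,x)$ is continuous on $[t,2t]\times M$; continuity in $y$ is classical (and was already invoked above via \cite{Ber}), while continuity in $\sigma$ for $\sigma$ bounded away from $0$ follows from standard a priori bounds on minimizers, e.g. the time-independent version of Lemma~\ref{le3-1}.

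Next I would argue that the double infimum is attained. Since $M$ is compact and $[t,2t]$ is compact, the continuous function $(\sigma,y)\mapsto u(y)+F_{0,\sigma}(y,x)$ on the compact set $[t,2t]\times M$ attains its minimum at some $(\sigma_0,y_0)$, so
\[
\tilde{T}^a_tu(x)=u(y_0)+F_{0,\sigma_0}(y_0,x),\qquad t\le\sigma_0\le 2t.
\]
Then, because $\sigma_0>0$, Tonelli's theorem (as cited, e.g.\ \cite{Mat91}) applied on the fixed interval $[0,\sigma_0]$ with endpoint conditions $\gamma(0)=y_0$, $\gamma(\sigma_0)=x$ yields a minimizing extremal curve $\gamma:[0,\sigma_0]\to M$ with
\[
F_{0,\sigma_0}(y_0,x)=\int_0^{\sigma_0}L_a(\gamma(s),\dot{\gamma}(s))\,ds,
\]
and this $\gamma$ together with $\sigma=\sigma_0$ does the job. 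Note that $\gamma$ is automatically $C^2$ and solves the Euler--Lagrange equation on all of $[0,\sigma_0]$, since it is a genuine Tonelli minimizer for fixed endpoints in time.

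The only genuine point requiring care — and the step I expect to be the main obstacle — is the joint continuity of $F_{0,\sigma}(y,x)$ in the time variable $\sigma$, since the existing references in the excerpt phrase the Lipschitz/continuity statements for $F_{t,t'}$ with the endpoints $t,t'$ varying in a fixed ambient way rather than with the length of the interval itself varying. To handle this I would observe that for $\sigma,\sigma'\in[t,2t]$ one can reparametrize a near-optimal path on $[0,\sigma]$ to a path on $[0,\sigma']$ and estimate the change in action using the a priori velocity bound supplied by the time-independent analogue of Lemma~\ref{le3-1} together with the local Lipschitz bound on $L_a$ on the resulting compact subset of $TM$; this gives $|F_{0,\sigma}(y,x)-F_{0,\sigma'}(y,x)|\le C|\sigma-\sigma'|$ with $C$ depending only on $t$ (not on $x,y$). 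Combined with the already-known Lipschitz dependence on $(y,x)$, this yields the needed joint continuity, and the compactness argument above then closes the proof. (Alternatively, one can simply cite the time-independent parallel of Proposition~\ref{pr3-2}, of which this is the exact analogue with $k$ replaced by the continuous parameter $\sigma$.)
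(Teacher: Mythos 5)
Your proposal is correct and follows essentially the same route as the paper: attain the infimum over $\sigma\in[t,2t]$ by continuity and compactness, then invoke the fixed-time Tonelli/Fathi result to produce the minimizing extremal curve on $[0,\sigma_0]$. The only difference is that the paper simply cites the continuity of $\sigma\mapsto T^a_\sigma u(x)$ (a known property of the Lax--Oleinik semigroup, cf.\ \cite[Corollary 4.4.4]{Fat-b}) instead of re-deriving the joint continuity of $(\sigma,y)\mapsto F_{0,\sigma}(y,x)$ via reparametrization, which is the step you correctly identify as the only point needing care.
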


\begin{proof}
Since $\sigma\mapsto T^a_\sigma u(x)$ is continuous on $[t,2t]$ and
$\tilde{T}^a_tu(x)=\inf_{t\leq \sigma\leq 2t}T^a_\sigma u(x)$, then there is
$\sigma_0\in[t,2t]$ such that $\tilde{T}^a_tu(x)=T^a_{\sigma_0}u(x)$. From
the property of the operator $T^a_{\sigma_0}$ (see
\cite [Lemma 4.4.1]{Fat-b}), there exists a minimizing extremal curve
$\gamma:[0,\sigma_0]\rightarrow M$ such that $\gamma(\sigma_0)=x$ and

\[
\tilde{T}^a_tu(x)=T^a_{\sigma_0}u(x)=u(\gamma(0))+\int_0^{\sigma_0}L_a(\gamma,\dot{\gamma})ds.
\]
\end{proof}

Some fundamental properties of $\tilde{T}^a_t$ are discussed in
the following proposition.
\begin{proposition}\label{pr4-2}
\noindent\begin{itemize}
    \item [(1)] For $u$, $v\in C(M,\mathbb{R}^1)$, if $u\leq
              v$, then $\tilde{T}^a_tu\leq\tilde{T}^a_tv$, $\forall t\geq
              0$.
    \item [(2)] If $c$ is a constant and $u\in C(M,\mathbb{R}^1)$,
              then $\tilde{T}^a_t(u+c)=\tilde{T}^a_tu+c$, $\forall t\geq
              0$.
    \item [(3)] For each $u$, $v\in C(M,\mathbb{R}^1)$ and each $t\geq 0$,
              $\|\tilde{T}^a_tu-\tilde{T}^a_tv\|_\infty\leq\|u-v\|_\infty$.
    \item [(4)] For each $u\in C(M,\mathbb{R}^1)$, $\lim_{t\rightarrow
              0^+}\tilde{T}^a_tu=u$.
    \item [(5)] For each $u\in C(M,\mathbb{R}^1)$,
               $(t,x)\mapsto\tilde{T}^a_tu(x)$ is continuous on $[0,+\infty)\times
               M$.
\end{itemize}
\end{proposition}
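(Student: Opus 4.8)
The plan is to deduce all five assertions from the representation
$\tilde{T}^a_tu(x)=\inf_{t\le\sigma\le 2t}T^a_\sigma u(x)$, which is recorded just before the proposition, together with the classical properties of the Lax--Oleinik semigroup $\{T^a_\sigma\}_{\sigma\ge 0}$ (monotonicity, commutation with additive constants, non-expansiveness, the fact that $T^a_\sigma u\to u$ uniformly as $\sigma\to 0^+$, and the joint continuity of $(\sigma,x)\mapsto T^a_\sigma u(x)$ on $[0,+\infty)\times M$; see \cite{Fat-b} and Proposition \ref{pr4-1}). The point is that each of these properties is stable under taking an infimum over the parameter $\sigma$, provided one keeps track of the moving range $[t,2t]$.

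For (1), if $u\le v$ then $T^a_\sigma u\le T^a_\sigma v$ pointwise for every $\sigma$, and taking the infimum over $\sigma\in[t,2t]$ preserves the inequality; for (2), $T^a_\sigma(u+c)=T^a_\sigma u+c$ for every $\sigma$ and $\inf_\sigma\big(T^a_\sigma u(x)+c\big)=\inf_\sigma T^a_\sigma u(x)+c$; for (3) I would argue exactly as in the proof of Proposition \ref{pr3-3}(3): from $-\|u-v\|_\infty+v\le u\le\|u-v\|_\infty+v$ and items (1), (2) one gets $\tilde{T}^a_tv-\|u-v\|_\infty\le\tilde{T}^a_tu\le\tilde{T}^a_tv+\|u-v\|_\infty$, hence $\|\tilde{T}^a_tu-\tilde{T}^a_tv\|_\infty\le\|u-v\|_\infty$. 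For (4), fix $u$ and $\varepsilon>0$; choose $\delta>0$ with $\|T^a_\sigma u-u\|_\infty<\varepsilon$ for all $\sigma\in[0,\delta]$. If $0\le t<\delta/2$ then $[t,2t]\subset[0,\delta]$, so $|T^a_\sigma u(x)-u(x)|<\varepsilon$ for all $x\in M$ and all $\sigma\in[t,2t]$; taking the infimum over $\sigma\in[t,2t]$ yields $u(x)-\varepsilon\le\tilde{T}^a_tu(x)\le u(x)+\varepsilon$ for all $x$, i.e. $\|\tilde{T}^a_tu-u\|_\infty\le\varepsilon$.

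For (5), continuity at a point $(0,x_0)$ follows from (4) (uniform convergence $\tilde{T}^a_tu\to u$ as $t\to 0^+$) combined with the continuity of $u$. For $t_0>0$ I would proceed as follows. Given $\varepsilon>0$ and $(t,x)$ near $(t_0,x_0)$, let $\sigma^*\in[t_0,2t_0]$ realise $\tilde{T}^a_{t_0}u(x_0)=T^a_{\sigma^*}u(x_0)$ (Proposition \ref{pr4-1}), and let $\sigma'$ be the point of $[t,2t]$ nearest to $\sigma^*$; then $|\sigma'-\sigma^*|\le 2|t-t_0|$ and $\tilde{T}^a_tu(x)\le T^a_{\sigma'}u(x)$, so the joint continuity of $(\sigma,y)\mapsto T^a_\sigma u(y)$ on $(0,+\infty)\times M$ gives $\tilde{T}^a_tu(x)\le\tilde{T}^a_{t_0}u(x_0)+\varepsilon$ once $(t,x)$ is close enough to $(t_0,x_0)$. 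Symmetrically, pick $\sigma''\in[t,2t]$ with $\tilde{T}^a_tu(x)=T^a_{\sigma''}u(x)$, let $\sigma'''$ be the point of $[t_0,2t_0]$ nearest to $\sigma''$ (so $|\sigma'''-\sigma''|\le 2|t-t_0|$), and conclude $\tilde{T}^a_{t_0}u(x_0)\le T^a_{\sigma'''}u(x_0)\le\tilde{T}^a_tu(x)+\varepsilon$. Combining the two estimates gives $|\tilde{T}^a_tu(x)-\tilde{T}^a_{t_0}u(x_0)|\le\varepsilon$, which is continuity at $(t_0,x_0)$.

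The only genuinely delicate ingredient is (5): one must invoke the (standard) joint continuity of $(\sigma,x)\mapsto T^a_\sigma u(x)$ on $[0,+\infty)\times M$, coming from the theory of the associated Cauchy problem \cite{Fat-b}, and one must handle the fact that the interval $[t,2t]$ over which the infimum is taken depends on $t$, so a near-minimiser $\sigma$ has to be relocated into the neighbouring interval at a cost controlled by $|t-t_0|$. Everything in (1)--(4) is then just a routine transfer of the known properties of $T^a_\sigma$ through the infimum, as in Proposition \ref{pr3-3}.
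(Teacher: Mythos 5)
Your proposal is correct, and items (1)--(3) coincide with the paper's argument (monotonicity and commutation with constants pass through the infimum over $\sigma\in[t,2t]$, and (3) follows from (1) and (2) exactly as in Proposition \ref{pr3-3}). Where you diverge is in (4) and (5). For (4) the paper does not simply quote $\|T^a_\sigma u-u\|_\infty\to0$; it re-proves it from scratch: approximate $u$ by a $C^1$ function $w$, use the superlinearity of $L_a$ and the Lipschitz constant of $w$ to get $T^a_\sigma w(x)\ge w(x)+C_{K_w}\sigma$, and the constant curve to get $T^a_\sigma w(x)\le w(x)+L_a(x,0)\sigma$, then take the infimum over $\sigma\in[t,2t]$. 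Your version — fix $\delta$ with $\|T^a_\sigma u-u\|_\infty<\varepsilon$ on $[0,\delta]$ and note that $[t,2t]\subset[0,\delta]$ for $t<\delta/2$ — is shorter and perfectly legitimate, since the uniform convergence of $T^a_\sigma u$ to $u$ as $\sigma\to0^+$ is the classical fact the paper's computation reproduces. For (5) the paper exploits the semigroup property of $\{\tilde T^a_t\}$ together with the non-expansiveness (3) to get $\|\tilde T^a_tu-\tilde T^a_{t_0}u\|_\infty\le\|\tilde T^a_{|t-t_0|}u-u\|_\infty$, thereby reducing joint continuity to continuity in $x$ plus item (4); your argument instead relocates a minimizing $\sigma$ from $[t_0,2t_0]$ into $[t,2t]$ (and back) at a cost $|\sigma'-\sigma^*|\le2|t-t_0|$ and invokes uniform continuity of $(\sigma,y)\mapsto T^a_\sigma u(y)$ on a compact neighbourhood. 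Both are sound; the paper's route is slicker but leans on the semigroup identity for $\tilde T^a_t$ (asserted without proof just before the proposition), whereas yours avoids that identity entirely at the price of a two-sided $\varepsilon$-estimate, which makes it the more self-contained of the two.
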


\begin{remark}
The property (3) means
that the semigroup $\{\tilde{T}^a_t\}_{t\geq 0}$ is continuous at
the origin or of class $C_0$ \cite{Kel}.
\end{remark}

\begin{proof}
Since $T^a_t$ has the monotonicity property (see \cite[Corollary 4.4.4]{Fat-b}),
then

\[
\tilde{T}^a_tu(x)=\inf_{t\leq \sigma\leq 2t}T^a_\sigma u(x)\leq\inf_{t\leq
\sigma\leq 2t}T^a_\sigma v(x)=\tilde{T}^a_tv(x), \quad \forall t>0, \ \forall
x\in M,
\]
i.e., (1) holds. (2) results from the definition of
$\tilde{T}^a_t$ directly. Note that for any $x\in M$,

\[
-\|u-v\|_\infty+v(x)\leq u(x)\leq\|u-v\|_\infty+v(x).
\]
By the properties of $T^a_\sigma$ (see \cite[Corollary 4.4.4]{Fat-b}),
for each $t\geq 0$ we have

\[
T^a_\sigma v(x)-\|u-v\|_\infty\leq T^a_\sigma u(x)\leq
T^a_\sigma v(x)+\|u-v\|_\infty, \quad \forall \sigma\in[t,2t].
\]
Taking the infimum on $\sigma$ over $[t,2t]$ yields

\[
\inf_{t\leq \sigma\leq 2t}T^a_\sigma v(x)-\|u-v\|_\infty\leq\inf_{t\leq \sigma\leq
2t}T^a_\sigma u(x)\leq\inf_{t\leq \sigma\leq 2t}T^a_\sigma v(x)+\|u-v\|_\infty,
\quad \forall x\in M,
\]
and thus (3) holds.

Next we prove (4). For each $u\in C(M,\mathbb{R}^1)$ and each
$\varepsilon>0$, there is $w\in C^1(M,\mathbb{R}^1)$ such that
$\|u-w\|_\infty<\varepsilon$ since $C^1(M,\mathbb{R}^1)$ is a
dense subset of $C(M,\mathbb{R}^1)$ in the topology of uniform
convergence. Thus, we have

\begin{align}\label{4-1}
\begin{split}
\|\tilde{T}^a_tu-u\|_\infty &
\leq\|\tilde{T}^a_tu-\tilde{T}^a_tw\|_\infty
+\|\tilde{T}^a_tw-w\|_\infty+\|w-u\|_\infty\\
&\leq 2\|w-u\|_\infty+\|\tilde{T}^a_tw-w\|_\infty\\
&\leq 2\varepsilon+\|\tilde{T}^a_tw-w\|_\infty, \quad \forall
t\geq 0,
\end{split}
\end{align}
where we have used (3). Since $M$ is compact, then $w$ is
Lipschitz. Denote the Lipschitz constant of $w$ by $K_w$, and by
the superlinearity of $L_a$ there exists $C_{K_w}\in\mathbb{R}^1$
such that

\[
L_a(x,v)\geq K_w\|v\|_x+C_{K_w}, \quad \forall(x,v)\in TM.
\]

For each $x\in M$, each $t\geq 0$ and each continuous and piecewise
$C^1$ path $\gamma:[0,\sigma]\to M$ with $\gamma(\sigma)=x$ and $t\leq \sigma\leq
2t$, since

\[
d(\gamma(0),\gamma(\sigma))\leq\int_0^\sigma\|\dot{\gamma}(s)\|_{\gamma(s)}ds,
\]
then

\[
\int_0^\sigma L_a(\gamma,\dot{\gamma})ds\geq
K_wd(\gamma(0),\gamma(\sigma))+C_{K_w}\sigma\geq
w(\gamma(\sigma))-w(\gamma(0))+C_{K_w}\sigma.
\]
Thus, by the definition of $T^a_\sigma$ we have

\[
T^a_\sigma w(x)\geq w(x)+C_{K_w}\sigma.
\]
Taking the infimum on $\sigma$ over $[t,2t]$ on both sides of this last
inequality yields

\begin{align}\label{4-2}
\tilde{T}^a_tw(x)\geq w(x)+O(t), \quad \mathrm{as}\ t\rightarrow
0^+,
\end{align}
where $O(t)$ is independent of $x$. Using the constant curve
$\gamma_x:[0,\sigma]\rightarrow M$, $s\mapsto x$, we have

\[
T^a_\sigma w(x)\leq w(x)+L_a(x,0)\sigma.
\]
Taking the infimum on $\sigma$ over $[t,2t]$, we obtain

\begin{align}\label{4-3}
\tilde{T}^a_tw(x)\leq w(x)+O(t), \quad \mathrm{as}\ t\rightarrow
0^+,
\end{align}
where $O(t)$ is independent of $x$. Combining (\ref{4-1}),
(\ref{4-2}) and (\ref{4-3}), we have

\[
\lim_{t\rightarrow 0^+}\|\tilde{T}^a_tu-u\|_\infty=0,
\]
i.e., (4) holds.

Finally, we prove (5). For any $(t_0,x_0)\in[0,+\infty)\times M$,
from the semigroup property and (3) we have

\begin{align}\label{4-4}
\begin{split}
|\tilde{T}^a_tu(x)-\tilde{T}^a_{t_0}u(x_0)|&
\leq|\tilde{T}^a_tu(x)-\tilde{T}^a_tu(x_0)|+|\tilde{T}^a_tu(x_0)-\tilde{T}^a_{t_0}u(x_0)|\\
&\leq
|\tilde{T}^a_tu(x)-\tilde{T}^a_tu(x_0)|+\|\tilde{T}^a_tu-\tilde{T}^a_{t_0}u\|_\infty\\
&\leq|\tilde{T}^a_tu(x)-\tilde{T}^a_tu(x_0)|+\|\tilde{T}^a_{|t-t_0|}u-u\|_\infty.
\end{split}
\end{align}
From (\ref{4-4}), $\tilde{T}^a_tu\in C(M,\mathbb{R}^1)$ and
(4), we conclude that (5) holds.
\end{proof}

The proposition below establishs a relationship between
$\tilde{T}^a_t$ and $T^a_t$.

\begin{proposition}\label{pr4-3}
\noindent
\begin{itemize}
    \item [(1)] For each $u\in C(M,\mathbb{R}^1)$, the uniform limit
              $\lim_{t\rightarrow+\infty}\tilde{T}^a_tu$ exists and
              \[
              \lim_{t\rightarrow+\infty}\tilde{T}^a_tu=\lim_{t\rightarrow+\infty}T^a_tu=\bar{u}.
              \]
    \item [(2)] For each $t\geq 0$ and each $u\in C(M,\mathbb{R}^1)$,
              $\|\tilde{T}^a_tu-\bar{u}\|_\infty\leq\|T^a_tu-\bar{u}\|_\infty.$
    \item [(3)] $u\in C(M,\mathbb{R}^1)$ is a fixed point of $\{\tilde{T}^a_t\}_{t\geq
              0}$ if and only if it is a fixed point of $\{T^a_t\}_{t\geq0}.$
\end{itemize}
\end{proposition}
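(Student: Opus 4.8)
The plan is to reduce all three statements to the representation $\tilde{T}^a_tu(x)=\inf_{t\leq\sigma\leq 2t}T^a_\sigma u(x)$ noted just after Definition \ref{def2}, combined with three ingredients: the uniform convergence $\lim_{\sigma\to+\infty}T^a_\sigma u=\bar{u}$ (Fathi), the fact that $\bar{u}$ is a fixed point of the L-O semigroup $\{T^a_\sigma\}_{\sigma\geq 0}$ (again Fathi, available since we have normalized $c(L_a)=0$), and the elementary order-preservation, non-expansiveness and commutation-with-constants properties of $T^a_\sigma$ recalled in the proof of Proposition \ref{pr4-2}. For (1), fix $\varepsilon>0$ and choose $S>0$ with $\|T^a_\sigma u-\bar{u}\|_\infty\leq\varepsilon$ for all $\sigma\geq S$. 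If $t\geq S$, then every $\sigma\in[t,2t]$ satisfies $\sigma\geq S$, hence $\bar{u}-\varepsilon\leq T^a_\sigma u\leq\bar{u}+\varepsilon$ on $M$; taking the infimum over $\sigma\in[t,2t]$ gives $\bar{u}-\varepsilon\leq\tilde{T}^a_tu\leq\bar{u}+\varepsilon$, i.e. $\|\tilde{T}^a_tu-\bar{u}\|_\infty\leq\varepsilon$. This yields $\lim_{t\to+\infty}\tilde{T}^a_tu=\bar{u}=\lim_{t\to+\infty}T^a_tu$.

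For (2), fix $t\geq 0$ and put $\delta=\|T^a_tu-\bar{u}\|_\infty$, so $\bar{u}-\delta\leq T^a_tu\leq\bar{u}+\delta$ on $M$. For $\sigma\in[t,2t]$ write $\sigma=t+r$ with $r\in[0,t]$ and use the semigroup property to get $T^a_\sigma u=T^a_r(T^a_tu)$. Applying the order-preserving operator $T^a_r$ to the above inequalities, and using that $T^a_r$ commutes with additive constants and fixes $\bar{u}$, one obtains $\bar{u}-\delta\leq T^a_\sigma u\leq\bar{u}+\delta$ for every $\sigma\in[t,2t]$. Hence each value $T^a_\sigma u(x)$ lies in $[\bar{u}(x)-\delta,\bar{u}(x)+\delta]$, and therefore so does the infimum $\tilde{T}^a_tu(x)$; taking the supremum over $x\in M$ gives $\|\tilde{T}^a_tu-\bar{u}\|_\infty\leq\delta=\|T^a_tu-\bar{u}\|_\infty$.

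For (3), if $u$ is a fixed point of $\{T^a_\sigma\}_{\sigma\geq 0}$, then $\tilde{T}^a_tu=\inf_{t\leq\sigma\leq 2t}T^a_\sigma u=\inf_{t\leq\sigma\leq 2t}u=u$ for all $t\geq 0$, so $u$ is a fixed point of $\{\tilde{T}^a_t\}_{t\geq 0}$. Conversely, if $\tilde{T}^a_tu=u$ for all $t\geq 0$, then letting $t\to+\infty$ and invoking part (1) gives $u=\bar{u}$; since $\bar{u}$ is a fixed point of $\{T^a_\sigma\}_{\sigma\geq 0}$, it follows that $T^a_\sigma u=T^a_\sigma\bar{u}=\bar{u}=u$ for all $\sigma\geq 0$.

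The parts (1) and (3) are essentially bookkeeping around the representation formula and Fathi's convergence theorem. The only step carrying a genuine idea is (2): the crux is that $\sigma\mapsto\|T^a_\sigma u-\bar{u}\|_\infty$ is non-increasing on $[t,+\infty)$ once $\bar{u}$ is recognized as a fixed point of the non-expansive, order-preserving, constant-commuting semigroup $\{T^a_\sigma\}$, because this is exactly what lets the bound $\|T^a_\sigma u-\bar u\|_\infty\le\|T^a_tu-\bar u\|_\infty$ survive the passage to the infimum over $\sigma\in[t,2t]$ that defines $\tilde{T}^a_t$.
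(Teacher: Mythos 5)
Your proof is correct and follows essentially the same route as the paper: the representation $\tilde{T}^a_tu=\inf_{t\le\sigma\le 2t}T^a_\sigma u$, Fathi's uniform convergence theorem for $\{T^a_\sigma\}$, the fact that $\bar{u}$ is a fixed point of that semigroup, and its non-expansiveness (which you phrase equivalently as monotonicity plus commutation with additive constants). The only cosmetic differences are that you prove (1) directly where the paper argues by contradiction via the joint continuity of $(\sigma,x)\mapsto T^a_\sigma u(x)$, and your forward implication in (3) is a one-line computation rather than an appeal to (2).
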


\begin{remark}
From (1) $\lim_{t\rightarrow+\infty}\tilde{T}^a_tu$ exists and is
a backward weak KAM solution of the Hamilton-Jacobi equation
$H_a(x,u_x)=0$. (2) essentially says that the new L-O semigroup
converges faster than the L-O semigroup. (3) implies that $u\in
C(M,\mathbb{R}^1)$ is a backward weak KAM solution if and only if
it is a fixed point of $\{\tilde{T}^a_t\}_{t\geq 0}$.
\end{remark}

\begin{remark}\label{re4-1}
Just as we mentioned earlier, for each $\tau\in[0,1]$ and each $u\in
C(M,\mathbb{R}^1)$, the uniform limit
$\lim_{n\to+\infty}\tilde{T}_n^{a,\tau}u$ exists and

\[
\lim_{n\to+\infty}\tilde{T}_n^{a,\tau}u=\lim_{n\to+\infty}T_n^au=\bar{u}.
\]
It can be proved by slight modifications of the proof of (1)
in Proposition \ref{pr4-3}.
\end{remark}

\begin{proof}
First we prove (1). Assume by contradiction that there exist
$\varepsilon_0>0$, $t_n\rightarrow+\infty$ and $x_n\in M$ such
that

\[
|\tilde{T}^a_{t_n}u(x_n)-\bar{u}(x_n)|\geq\varepsilon_0.
\]
From the compactness of $M$, without loss of generality we assume
that $x_n\rightarrow x_0$, $n\rightarrow+\infty$. In view of the
definition of $\tilde{T}^a_t$, there exist $\sigma_n\in[t_n,2t_n]$ such
that

\[
|T^a_{\sigma_n}u(x_n)-\bar{u}(x_n)|\geq\varepsilon_0.
\]
Let $n\rightarrow+\infty$. Since $(\sigma,x)\mapsto T^a_\sigma u(x)$ is
continuous, then we have

\[
\lim_{\sigma\rightarrow+\infty}T^a_\sigma u(x_0)\neq\bar{u}(x_0),
\]
which contradicts $\lim_{\sigma\rightarrow+\infty}T^a_\sigma u=\bar{u}$.

Next we show (2). For each $t\geq 0$ and each $x\in M$, there
exists $t\leq \sigma_x\leq 2t$ such that

\[
|\tilde{T}^a_tu(x)-\bar{u}(x)|=|T^a_{\sigma_x}u(x)-\bar{u}(x)|.
\]
Since $\bar{u}$ is a fixed point of $\{T^a_t\}_{t\geq 0}$, then we
have
$|T^a_{\sigma_x}u(x)-\bar{u}(x)|=|T^a_{\sigma_x}u(x)-T^a_{\sigma_x}\bar{u}(x)|\leq
\|T^a_{\sigma_x}u-T^a_{\sigma_x}\bar{u}\|_\infty=\|T^a_{\sigma_x-t}\circ
T^a_tu-T^a_{\sigma_x-t}\circ T^a_t\bar{u}\|_\infty\leq
\|T^a_tu-T^a_t\bar{u}\|_\infty=\|T^a_tu-\bar{u}\|_\infty$, where
we have used the non-expansiveness property of $T^a_{\sigma_x-t}$
(see \cite[Corollary 4.4.4]{Fat-b}). Hence (2) holds.

At last, we show (3). Suppose that $u$ is a fixed point of
$\{T^a_t\}_{t\geq 0}$, i.e., $T^a_tu=u$, $\forall t\geq 0$. Then
$\lim_{t\rightarrow+\infty}T^a_tu=u$. From (2) we have

\[
\|\tilde{T}^a_tu-u\|_\infty\leq\|T^a_tu-u\|_\infty=0, \quad
\forall t\geq 0,
\]
which implies that $u$ is a fixed point of
$\{\tilde{T}^a_t\}_{t\geq 0}$. Suppose conversely that $u$ is a
fixed point of $\{\tilde{T}^a_t\}_{t\geq 0}$. Then from (1)
$\lim_{t\rightarrow+\infty}\tilde{T}^a_tu=u=\lim_{t\rightarrow+\infty}T^a_tu$.
Hence $u$ is a backward weak KAM solution of $H_a(x,u_x)=0$ and a
fixed point of $\{T^a_t\}_{t\geq 0}$.
\end{proof}

\subsection{Rates of convergence of the L-O semigroup and the new L-O
semigroup} Recall the $C^2$ positive definite and superlinear
Lagrangian (\ref{1-6})

\begin{align*}
L^1_a(x,v)=\frac{1}{2}\langle
A(x)(v-\omega),(v-\omega)\rangle+f(x,v-\omega), \quad x\in
\mathbb{T}^n,\ v\in\mathbb{R}^n.
\end{align*}
The conjugated Hamiltonian
$H^1_a:\mathbb{T}^n\times\mathbb{R}^n\rightarrow\mathbb{R}^1$ of
$L_a^1$ has the following form

\[
H^1_a(x,p)=\langle\omega,p\rangle+\frac{1}{2}\langle
A^{-1}(x)p,p\rangle+g(x,p),
\]
where $g(x,p)=O(\|p\|^3)$ as $p\rightarrow 0$. It is clear that
$H^1_a(x,0)=0$ and thus $w\equiv const.$ is a smooth viscosity
solution of the corresponding Hamilton-Jacobi equation
$H^1_a(x,u_x)=0$. In view of the Legendre transform,

\[
L^1_a(x,v)=L^1_a(x,v)-\langle
w_x,v\rangle\geq-H^1_a(x,w_x)=-H^1_a(x,0)=0, \quad \forall
(x,v)\in\mathbb{T}^n\times\mathbb{R}^n.
\]
Furthermore, if
$(x,v)\in\tilde{\mathcal{M}}_0=\cup_{x\in\mathbb{T}^n}(x,\omega)$,
then $w_x=\frac{\partial L}{\partial v}(x,v)$ (see \cite[Theorem 4.8.3]{Fat-b}),
from which we have

\[
L^1_a(x,v)=L^1_a(x,v)-\langle
w_x,v\rangle=-H^1_a(x,w_x)=-H^1_a(x,0)=0.
\]
Hence

\[
L^1_a\geq 0, \quad \forall (x,v)\in\mathbb{T}^n\times\mathbb{R}^n
\]
and in particular,

\[
L^1_a|_{\cup_{x\in\mathbb{T}^n}(x,\omega)}=0.
\]

For each $u\in C(\mathbb{T}^n,\mathbb{R}^1)$, because of
$c(L^1_a)=0$ we have $\lim_{t\rightarrow+\infty}T_t^au=\bar{u}$.
Note that both $w\equiv const.$ and $\bar{u}$ are  viscosity
solutions of $H^1_a(x,u_x)=0$. Hence $\bar{u}\equiv const.$ since
the viscosity solution of $H^1_a(x,u_x)=0$ is unique up to
constants when $\mathcal{A}_0=\mathbb{T}^n$ \cite{Lia}, where
$\mathcal{A}_0$ is the projected Aubry set.

\subsubsection{Rate of convergence of the L-O semigroup}
We present here the proof of Theorem \ref{th2}. For this, the
following lemma is needed.

\begin{lemma}\label{le4-1}
For each $u\in C(\mathbb{T}^n,\mathbb{R}^1)$,
$\bar{u}\equiv\min_{x\in\mathbb{T}^n}u(x)$.
\end{lemma}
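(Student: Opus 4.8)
The plan is to prove the two inequalities $\bar u\geq\min_{\mathbb{T}^n}u$ and $\bar u\leq\min_{\mathbb{T}^n}u$ separately, using only facts already established above: that $L^1_a\geq 0$ on all of $T\mathbb{T}^n$, that $L^1_a$ vanishes identically on the invariant torus $\tilde{\mathcal{M}}_0=\cup_{x\in\mathbb{T}^n}(x,\omega)$, and that $\bar u=\lim_{t\to+\infty}T^a_tu$ is a \emph{constant} function (as noted just above the lemma, since the viscosity solution is unique up to constants when $\mathcal{A}_0=\mathbb{T}^n$).

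For the lower bound I would argue directly from nonnegativity of $L^1_a$: for any $t\geq 0$, any $x\in\mathbb{T}^n$, and any continuous piecewise $C^1$ curve $\gamma:[0,t]\to\mathbb{T}^n$ with $\gamma(t)=x$, we have $u(\gamma(0))+\int_0^t L^1_a(\gamma,\dot\gamma)\,ds\geq u(\gamma(0))\geq\min_{y\in\mathbb{T}^n}u(y)$. Taking the infimum over such $\gamma$ gives $T^a_tu(x)\geq\min_y u(y)$ for every $x$ and $t$, and letting $t\to+\infty$ yields $\bar u\geq\min_y u(y)$.

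For the upper bound I would exhibit, for each $t>0$, a competitor of zero action in the variational definition of $T^a_tu$. Choose $y_0\in\mathbb{T}^n$ with $u(y_0)=\min_y u(y)$, pick a lift $\tilde y_0\in\mathbb{R}^n$, and set $\gamma_t(s)=\pi(\tilde y_0+s\omega)$ for $s\in[0,t]$. Then $\gamma_t$ is smooth with $\dot\gamma_t(s)\equiv\omega$, so $(\gamma_t(s),\dot\gamma_t(s))=(\gamma_t(s),\omega)\in\cup_{x\in\mathbb{T}^n}(x,\omega)$ and hence $L^1_a(\gamma_t(s),\dot\gamma_t(s))=0$ for all $s$; moreover $\gamma_t$ is admissible for $T^a_tu$ at its endpoint $x_t:=\gamma_t(t)$. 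Therefore $T^a_tu(x_t)\leq u(\gamma_t(0))+\int_0^t L^1_a(\gamma_t,\dot\gamma_t)\,ds=\min_y u(y)$ for all $t>0$. Since $T^a_tu\to\bar u$ uniformly and $\bar u$ is constant, passing to the limit $t\to+\infty$ along the points $x_t$ gives $\bar u=\lim_{t\to+\infty}T^a_tu(x_t)\leq\min_y u(y)$. Combined with the lower bound this gives $\bar u\equiv\min_{\mathbb{T}^n}u$.

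There is no real obstacle here; the only point to be mindful of is that the moving endpoint $x_t$ need not converge or equidistribute in $\mathbb{T}^n$, so the upper-bound step genuinely uses the already-known constancy of $\bar u$ — without it, a bound on $T^a_tu$ at $x_t$ alone would not pin down $\bar u$ at every point of $\mathbb{T}^n$.
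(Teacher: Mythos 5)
Your proof is correct, but your upper bound goes by a genuinely different route than the paper's. The paper picks the target point $x$ first and joins $y_0$ to $x$ by the straight line with tilted direction $\omega'=\omega+\Delta/t$, $\|\Delta\|\leq\sqrt{n}/2$; since $L^1_a$ is quadratic-plus-higher-order in $v-\omega$, this competitor has action $O(1/t)$, giving $T^a_tu(x)\leq\min u+C/t+O(1/t^2)$ at \emph{every} $x$ and hence the upper bound (and the constancy of $\bar u$) pointwise, with no appeal to the viscosity-solution uniqueness result. You instead use the exact flow line $s\mapsto\pi(\tilde y_0+s\omega)$, which has zero action but an uncontrolled endpoint $x_t$, and then transfer the bound from $x_t$ to all of $\mathbb{T}^n$ via the uniform convergence $T^a_tu\to\bar u$ and the previously established fact that $\bar u$ is constant (which the paper does prove before the lemma, citing the uniqueness of viscosity solutions from [Lia], so this is a legitimate input). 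Your version is more economical in that it avoids any quantitative estimate; the paper's version is self-contained at this point (it re-derives constancy as a byproduct) and, more importantly, its $O(1/t)$ computation is exactly the one reused in the proof of Theorem \ref{th2}, so the lemma's proof doubles as a warm-up for the rate estimate. You correctly flag the one subtlety in your approach: without constancy of $\bar u$, controlling $T^a_tu$ only along the moving points $x_t$ would not suffice.
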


\begin{proof}
For any $x\in\mathbb{T}^n$, from the definition of $T^a_t$ we have

\[
\bar{u}(x)=\lim_{t\rightarrow+\infty}T^a_tu(x)
=\lim_{t\rightarrow+\infty}\inf_{z\in\mathbb{T}^n}
\{u(z)+\int_0^tL^1_a(\gamma_z,\dot{\gamma}_z)ds\},
\]
where $\gamma_z:[0,t]\rightarrow\mathbb{T}^n$ is a Tonelli
minimizer with $\gamma_z(0)=z$, $\gamma_z(t)=x$. Since $L^1_a\geq
0$, then $\bar{u}(x)\geq\min_{z\in\mathbb{T}^n}u(z)$ and therefore
it suffices to show that
$\bar{u}(x)\leq\min_{z\in\mathbb{T}^n}u(z)$.

Take $y\in\mathbb{T}^n$ with $u(y)=\min_{z\in\mathbb{T}^n}u(z)$.
Consider the following two curves

\[
\gamma_\omega:[0,t]\rightarrow\mathbb{T}^n,\  s\mapsto \omega s+y
\]
and

\[
\gamma_{\omega'}:[0,t]\rightarrow\mathbb{T}^n,\  s\mapsto \omega'
s+y
\]
with $\gamma_{\omega'}(t)=x$, where $\omega'\in\mathbb{S}^{n-1}$
and $t>0$. It is clear that $\gamma_{\omega'}$ is a curve in
$\mathbb{T}^n$ connecting $y$ and $x$. Let
$\Delta=\gamma_{\omega'}(t)-\gamma_\omega(t)=x-(\omega t+y)$. Then
$\|\Delta\|\leq\frac{\sqrt{n}}{2}$ and
$\dot{\gamma}_{\omega'}\equiv\omega'=\frac{\Delta}{t}+\omega$.
Therefore, we have

\begin{align*}
T^a_tu(x) & \leq
          u(\gamma_{\omega'}(0))+\int_0^tL^1_a(\gamma_{\omega'},\dot{\gamma}_{\omega'})ds\\
        & = u(y)+\int_0^t\Big(\frac{1}{2}\langle
        A(\gamma_{\omega'})(\omega'-\omega),(\omega'-\omega)\rangle+f(\gamma_{\omega'},\omega'-\omega)\Big)ds\\
        & = u(y)+\int_0^t\Big(\frac{1}{2}\Big\langle
        A(\gamma_{\omega'})\frac{\Delta}{t},\frac{\Delta}{t}\Big\rangle+f(\gamma_{\omega'},\frac{\Delta}{t})\Big)ds\\
        & \leq u(y)+\frac{C}{t}+O(\frac{1}{t^2}),
\end{align*}
where $C$ is a constant, which depends only on $n$.

From the arguments above we know that for any $\varepsilon>0$,
there exists $T>0$ such that for any $t>T$ there exists
$\gamma_{\omega'}:[0,t]\rightarrow\mathbb{T}^n$ with
$\gamma_{\omega'}(t)=x$, and

\[
T^a_tu(x)\leq
u(\gamma_{\omega'}(0))+\int_0^tL^1_a(\gamma_{\omega'},\dot{\gamma}_{\omega'})ds\leq
\min_{z\in\mathbb{T}^n}u(z)+\varepsilon.
\]
Hence
$\bar{u}(x)=\lim_{t\rightarrow+\infty}T^a_tu(x)\leq\min_{z\in\mathbb{T}^n}u(z)$.
\end{proof}

\noindent\emph{Proof of Theorem \ref{th2}.} In order to prove our
result, it is sufficient to show that for each $u\in
C(\mathbb{T}^n,\mathbb{R}^1)$, there exists a constant $K>0$ such
that the following two inequalities hold.

\[
T^a_tu(x)-\bar{u}(x)\leq \frac{K}{t}, \quad \forall t>0,\ \forall
x\in\mathbb{T}^n; \eqno (\mathrm{I1})
\]

\[
\bar{u}(x)-T^a_tu(x)\leq \frac{K}{t}, \quad \forall t>0,\ \forall
x\in\mathbb{T}^n. \eqno (\mathrm{I2})
\]
Obviously, (I2) holds. In fact, for each $t>0$ and each
$x\in\mathbb{T}^n$, from the definition of $T^a_t$ we have

\[
T^a_tu(x) =\inf_{z\in\mathbb{T}^n}
\{u(z)+\int_0^tL^1_a(\gamma_z,\dot{\gamma}_z)ds\},
\]
where $\gamma_z:[0,t]\rightarrow\mathbb{T}^n$ is a Tonelli
minimizer with $\gamma_z(0)=z$, $\gamma_z(t)=x$. In view of
$L^1_a\geq 0$ and Lemma \ref{le4-1}, we have

\[
T^a_tu(x) =\inf_{z\in\mathbb{T}^n}
\{u(z)+\int_0^tL^1_a(\gamma_z,\dot{\gamma}_z)ds\}\geq\min_{z\in\mathbb{T}^n}u(z)=\bar{u}(x).
\]
Thus $\bar{u}(x)-T^a_tu(x)\leq 0$, $\forall t>0,\ \forall
x\in\mathbb{T}^n$ and (I2) holds.

Next we prove (I1). It suffices to show that there exists a
constant $C>0$ such that for sufficiently large $t>0$,

\begin{align}\label{4-5}
T^a_tu(x)-\bar{u}(x)\leq \frac{C}{t}, \quad \forall
x\in\mathbb{T}^n,
\end{align}
where $C$ depends only on $n$. In deed, since $(s,z)\mapsto
T_su(z)$ is continuous on $[0,\infty)\times\mathbb{T}^n$, if
(\ref{4-5}) holds, then there exists a constant $K>0$ such that

\[
T^a_tu(x)-\bar{u}(x)\leq\frac{K}{t}, \quad \forall t>0,\ \forall
x\in\mathbb{T}^n,
\]
where $K$ depends only on $n$ and $u$.

Take $y\in\mathbb{T}^n$ with $u(y)=\min_{z\in\mathbb{T}^n}u(z)$.
Let us consider the following curve in $\mathbb{T}^n$

\[
\gamma_\omega:[0,t]\rightarrow\mathbb{T}^n,\  s\mapsto \omega s+y,
\]
where $t>0$. Then for each $x\in\mathbb{T}^n$, let

\[
\gamma_{\omega'}:[0,t]\rightarrow\mathbb{T}^n,\  s\mapsto \omega'
s+y
\]
be a curve in $\mathbb{T}^n$ connecting $y$ and $x$, where
$\omega'\in\mathbb{S}^{n-1}$. Let
$\Delta=\gamma_{\omega'}(t)-\gamma_\omega(t)=x-(\omega t+y)$. Then
$\|\Delta\|\leq\frac{\sqrt{n}}{2}$ and
$\dot{\gamma}_{\omega'}\equiv\omega'=\frac{\Delta}{t}+\omega$.
Hence,

\begin{align*}
T^a_tu(x) & \leq
          u(\gamma_{\omega'}(0))+\int_0^tL^1_a(\gamma_{\omega'},\dot{\gamma}_{\omega'})ds\\
        & = u(y)+\int_0^t\Big(\frac{1}{2}\langle
        A(\gamma_{\omega'})(\omega'-\omega),(\omega'-\omega)\rangle+f(\gamma_{\omega'},\omega'-\omega)\Big)ds\\
        & = u(y)+\int_0^t\Big(\frac{1}{2}\Big\langle
        A(\gamma_{\omega'})\frac{\Delta}{t},\frac{\Delta}{t}\Big\rangle+f(\gamma_{\omega'},\frac{\Delta}{t})\Big)ds\\
        & \leq u(y)+\frac{C_1}{t}+O(\frac{1}{t^2}),
\end{align*}
where $C_1$ is a constant which depends only on $n$. From Lemma
\ref{le4-1}, we have $T^a_tu(x)-\bar{u}(x)\leq\frac{C}{t}$ for
$t>0$ large enough, where $C$ is a constant which still depends
only on $n$, i.e., (\ref{4-5}) holds. \hfill $\Box$

\subsubsection{Rate of convergence of the new L-O semigroup}
To complete the proof of Theorem \ref{th3}, we review
preliminaries on the ergodization rate for linear flows on the
torus $\mathbb{T}^n$, i.e., the rate at which the image of a point
fills the torus when subjected to linear flows. There is a direct
relationship between the rate of convergence of the new L-O
semigroup and the ergodization rate for linear flows on the torus
$\mathbb{T}^n$. Let us recall the following result of Dumas'
\cite{Dum} concerning the estimate of ergodization time.

For each $t\in\mathbb{R}^1$ and each $\omega\in\mathbb{S}^{n-1}$,
consider the one-parameter family of translation maps
$\omega_t:\mathbb{T}^n\rightarrow\mathbb{T}^n$, $x\mapsto x+\omega
t$. A rectilinear orbit of $\mathbb{T}^n$ with direction vector
$\omega$ and initial condition $x$ is defined as the image of $x$
under the linear flow $\omega_t$ over some closed interval
$[t_0,t_1]\subset\mathbb{R}^1$, i.e.,

\[
\bigcup_{t_0\leq t\leq t_1}\omega_t(x).
\]

Given $R>0$, the direction vector $\omega\in\mathbb{S}^{n-1}$ is
said to ergodize $\mathbb{T}^n$ to within $R$ after time $T$ if

\begin{align}\label{4-6}
\bigcup_{0\leq t\leq T}\omega_t(B_R(x))=\mathbb{T}^n
\end{align}
for all $x\in\mathbb{T}^n$.

As defined in the Introduction, for $\rho>n-1$ and $\alpha>0$,

\[
\mathcal{D}(\rho,\alpha)=\Big\{\beta\in \mathbb{S}^{n-1}|\
|\langle\beta,k\rangle|>\frac{\alpha}{|k|^\rho},\ \forall
k\in\mathbb{Z}^n\backslash\{0\}\Big\},
\]
whose elements can not be approximated by rationals too rapidly.

\begin{theorem}[Dumas \cite{Dum}]\label{th5}
Let $0<R\leq 1$. Given any highly nonresonant direction vector
$\omega\in\mathcal{D}(\rho,\alpha)$, rectilinear orbits of
$\mathbb{T}^n$ with direction vector $\omega$ will ergodize
$\mathbb{T}^n$ to within $R$ after time T, where

\[
T=\frac{2\|V_*\|_\triangle}{\alpha\pi R^{\rho+n/2}}
\]
is independent of $\omega$.
\end{theorem}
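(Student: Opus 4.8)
This is Dumas's ergodization estimate \cite{Dum}, and the natural route is the classical Fourier (Weyl sum) method. The plan is to recast ``$\omega$ ergodizes $\mathbb{T}^n$ to within $R$ after time $T$'' as the \emph{positivity}, for every $x$, $y\in\mathbb{T}^n$, of the time integral of a fixed-shape bump along the rectilinear orbit through $x$, and then to estimate that integral by expanding the bump in its Fourier series on $\mathbb{T}^n$ and exploiting the Diophantine condition $\omega\in\mathcal{D}(\rho,\alpha)$ to dominate every nonzero frequency.

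First I would fix, once and for all, a model profile $V_*\geq 0$ on $\mathbb{R}^n$ that is $C^\infty$, supported in the unit ball, and normalized by $\int_{\mathbb{R}^n}V_*=1$; then for $0<R\leq 1$ put $V(z)=V_*(z/R)$, which, its support sitting inside a fundamental domain, descends to a nonnegative smooth function on $\mathbb{T}^n$ supported in the ball of radius $R$ about $0$. Next, suppose for contradiction that some $\omega\in\mathcal{D}(\rho,\alpha)$ fails to ergodize to within $R$ after time $T$: then there are $x$, $y\in\mathbb{T}^n$ with $d_{\mathbb{T}^n}(x+\omega t,y)\geq R$ for all $t\in[0,T]$, hence $V(x-y+\omega t)=0$ on $[0,T]$, so $\int_0^T V(x-y+\omega t)\,dt=0$. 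Expanding $V=\sum_{k\in\mathbb{Z}^n}\hat V(k)\,e^{2\pi i\langle k,\cdot\rangle}$ (absolutely convergent since $V$ is smooth) and integrating term by term yields
\[
0=\hat V(0)\,T+\sum_{k\neq 0}\hat V(k)\,e^{2\pi i\langle k,x-y\rangle}\,\frac{e^{2\pi i\langle k,\omega\rangle T}-1}{2\pi i\,\langle k,\omega\rangle}.
\]
Using $|e^{i\theta}-1|\leq 2$ and then the Diophantine lower bound $|\langle k,\omega\rangle|\geq\alpha|k|^{-\rho}$ gives
\[
\hat V(0)\,T\ \leq\ \frac{1}{\pi}\sum_{k\neq 0}\frac{|\hat V(k)|}{|\langle k,\omega\rangle|}\ \leq\ \frac{1}{\pi\alpha}\sum_{k\neq 0}|k|^{\rho}\,|\hat V(k)|.
\]
Thus, as soon as $T$ is \emph{strictly larger} than $\big(\pi\alpha\,\hat V(0)\big)^{-1}\sum_{k\neq 0}|k|^{\rho}|\hat V(k)|$ we reach a contradiction, so no such bad pair $(x,y)$ can exist; what remains is to bound this quantity and extract its scaling in $R$.

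Here one records that $\hat V(0)=\int_{\mathbb{T}^n}V=R^{n}$ and $\hat V(k)=R^{n}\,\widehat{V_*}(Rk)$, so that $\sum_{k\neq 0}|k|^{\rho}|\hat V(k)|$ is, up to the Schwartz tails of $\widehat{V_*}$, a Riemann-type sum for $R^{-\rho}\int_{\mathbb{R}^n}|\xi|^{\rho}|\widehat{V_*}(\xi)|\,d\xi$; a careful choice of $V_*$ together with a weighted splitting of the lattice sum — organized so as to refine the crude exponent $\rho+n$ to the sharp $\rho+n/2$ — should produce
\[
\frac{1}{\pi\alpha\,\hat V(0)}\sum_{k\neq 0}|k|^{\rho}|\hat V(k)|\ \leq\ \frac{2\,\|V_*\|_{\triangle}}{\alpha\pi}\,R^{-(\rho+n/2)},
\]
where $\|V_*\|_{\triangle}$ is the corresponding fixed norm of the model profile, and the $T$ in the statement is exactly this threshold. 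The step I expect to be the main obstacle is precisely this last, bookkeeping-heavy one: selecting $V_*$ and organizing the sum so that the \emph{sharp} power $R^{-(\rho+n/2)}$ and the clean constant $2\|V_*\|_\triangle/(\alpha\pi)$ appear, rather than a cruder power; the naive estimate only gives $R^{-(\rho+n)}$, and recovering the extra factor $R^{n/2}$ is the real content of Dumas's argument. Everything else is the soft Fourier computation above; and the asserted independence of $T$ from $\omega$ is automatic, since only $\alpha$, $\rho$, $n$, $R$ and the fixed $V_*$ enter the final bound, not the particular vector $\omega\in\mathcal{D}(\rho,\alpha)$.
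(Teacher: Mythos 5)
First, a point of reference: the paper does not prove this statement at all --- Theorem \ref{th5} is quoted verbatim from Dumas \cite{Dum}, so there is no in-paper argument to compare yours against. Your Fourier/test-function framework is indeed the right skeleton (and the soft part --- the contradiction setup, the term-by-term integration, the bound $|e^{i\theta}-1|\le 2$ --- is fine), but your writeup openly leaves the decisive step unproved, and as set up that step cannot be completed. If you apply the Diophantine bound $|\langle k,\omega\rangle|\ge\alpha|k|^{-\rho}$ termwise and then try to control $\hat V(0)^{-1}\sum_{k\neq0}|k|^{\rho}|\hat V(k)|$, you are stuck at $R^{-(\rho+n)}$ for \emph{every} nonnegative $V$ supported in $B_R$, not just for the self-similar family $V_*(\cdot/R)$: by the uncertainty principle, positivity and support in $B_R$ force $|\hat V(k)|\ge\hat V(0)\cos(2\pi|k|R)\ge\hat V(0)/2$ for all $|k|\le c/R$, whence $\sum_{k\neq0}|k|^{\rho}|\hat V(k)|\gtrsim\hat V(0)\,R^{-\rho-n}$. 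So the refinement you defer to ``a careful choice of $V_*$ together with a weighted splitting of the lattice sum'' is not bookkeeping --- it is impossible within the $\ell^1$/worst-case-divisor framework you set up.

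The missing idea in Dumas's argument is to pair the Fourier coefficients against the small divisors in $\ell^2$ rather than $\ell^1$: one applies Cauchy--Schwarz to $\sum_{k\neq0}|\hat V(k)|/|\langle k,\omega\rangle|$, putting $|\hat V(k)|$ into a weighted $\ell^2$ (Sobolev) norm --- this is exactly the norm $\|V_*\|_\triangle$ of the ``smoothest test function'' appearing in the statement --- and controlling the dual factor $\bigl(\sum_k w_k^{-1}|\langle k,\omega\rangle|^{-2}\bigr)^{1/2}$ by a dyadic counting estimate for the lattice points with $|\langle k,\omega\rangle|$ small, not merely by the termwise lower bound. The gain of $R^{n/2}$ is then visible from scaling: $\|V\|_{L^2}/\hat V(0)\sim R^{-n/2}$ whereas $\|\hat V\|_{\ell^1}/\hat V(0)\sim R^{-n}$. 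Since the result is external to the paper anyway, the honest course is to cite \cite{Dum} as the authors do; as a standalone proof, yours has a genuine, and in its present form unfixable, gap at precisely the point you flagged.
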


\begin{remark}
The constant $\|V_*\|_\triangle$ is a Sobolev norm of a certain
``smoothest test function" and it depends only on $n$ and $\rho$.
See \cite{Dum} for complete details.
\end{remark}

We are now in a position to give the proof of Theorem \ref{th3}.

\noindent\emph{Proof of Theorem \ref{th3}.} Our purpose is to show
that for each $u\in C(\mathbb{T}^n,\mathbb{R}^1)$, there exists a
constant $\tilde{K}>0$ such that the following two inequalities
hold.

\[
\tilde{T}^a_tu(x)-\bar{u}(x)\leq
\tilde{K}t^{-(1+\frac{4}{2\rho+n})}, \quad \forall t>0,\ \forall
x\in\mathbb{T}^n; \eqno (\mathrm{I3})
\]

\[
\bar{u}(x)-\tilde{T}^a_tu(x)\leq
\tilde{K}t^{-(1+\frac{4}{2\rho+n})}, \quad \forall t>0,\ \forall
x\in\mathbb{T}^n. \eqno (\mathrm{I4})
\]

First we show (I4). For each $t>0$ and each $x\in\mathbb{T}^n$, by
the definition of $\tilde{T}^a_t$ we have

\[
\tilde{T}^a_tu(x) =\inf_{t\leq \sigma\leq
2t}\inf_{z\in\mathbb{T}^n} \{u(z)+\int_0^\sigma
L^1_a(\gamma_z,\dot{\gamma}_z)ds\},
\]
where $\gamma_z:[0,\sigma]\rightarrow\mathbb{T}^n$ is a Tonelli
minimizer with $\gamma_z(0)=z$, $\gamma_z(\sigma)=x$. In view of
$L^1_a\geq 0$ and Lemma \ref{le4-1}, we have

\[
\tilde{T}^a_tu(x) =\inf_{t\leq \sigma\leq
2t}\inf_{z\in\mathbb{T}^n} \{u(z)+\int_0^\sigma
L^1_a(\gamma_z,\dot{\gamma}_z)ds\}\geq\min_{z\in\mathbb{T}^n}u(z)=\bar{u}(x).
\]
Thus $\bar{u}(x)-\tilde{T}^a_tu(x)\leq 0$, $\forall t>0,\ \forall
x\in\mathbb{T}^n$, i.e., (I4) holds.

Then it remains to show (I3). When $R=1$, according to Theorem
\ref{th5} the ergodization time
$T=\frac{2\|V_*\|_\triangle}{\alpha\pi}$. For any $t\geq T$, let
$R_t=\sqrt[\rho+n/2]{\frac{2\|V_*\|_\triangle}{\alpha\pi t}}$.
Then $0<R_t\leq 1$.

Take $y\in\mathbb{T}^n$ with $u(y)=\min_{z\in\mathbb{T}^n}u(z)$.
Let $y_t=\omega_t(y)=\omega t+y$. For $R_t$ defined above, since
$\omega\in\mathcal{D}(\rho,\alpha)$, then from Theorem \ref{th5}
and (\ref{4-6}) we have

\[
\bigcup_{0\leq\varsigma\leq
t}\omega_\varsigma(B_{R_t}(y_t))=\mathbb{T}^n.
\]

Therefore, for each $x\in\mathbb{T}^n$, there exists
$0\leq\varsigma'\leq t$ such that
$d_{\mathbb{T}^n}(\omega_{\varsigma'}(y_t),x)\leq R_t$, i.e.,
$d_{\mathbb{T}^n}(\omega(t+\varsigma')+y,x)\leq R_t$. Equivalently
this means that there exists $t\leq \sigma'\leq 2t$ such that

\[
d_{\mathbb{T}^n}(\omega \sigma'+y,x)\leq R_t,
\]
where $\sigma'=t+\varsigma'$. Consider the following curve in
$\mathbb{T}^n$

\[
\gamma_{\omega'}:[0,\sigma']\rightarrow\mathbb{T}^n,\  s\mapsto
\omega's+y
\]
with $\gamma_{\omega'}(\sigma')=x$, where
$\omega'\in\mathbb{S}^{n-1}$. It is clear that $\gamma_{\omega'}$
connects $y$ and $x$. Let
$\Delta=\gamma_{\omega'}(\sigma')-\omega_{\sigma'}(y)=x-(\omega
\sigma'+y)$. Then $\|\Delta\|=d_{\mathbb{T}^n}(x,\omega
\sigma'+y)\leq R_t$ and
$\dot{\gamma}_{\omega'}\equiv\omega'=\frac{\Delta}{\sigma'}+\omega$.
Hence we have

\begin{align*}
\tilde{T}^a_tu(x)-\bar{u}(x) & \leq u(\gamma_{\omega'}(0))+\int_0^{\sigma'}L^1_a(\gamma_{\omega'},\dot{\gamma}_{\omega'})ds-\bar{u}(x)\\
                           & =\int_0^{\sigma'}\Big(\frac{1}{2}\langle
                           A(\gamma_{\omega'})(\omega'-\omega),(\omega'-\omega)\rangle+f(\gamma_{\omega'},\omega'-\omega)\Big)ds\\
                           &\leq \frac{CR_t^2}{t}
\end{align*}
for sufficiently large $t>0$ and some constant $C>0$. Since
$R_t^2=(\frac{2\|V_*\|_\triangle}{\alpha\pi
t})^{\frac{2}{\rho+n/2}}$, then for $t>0$ large enough we have

\[
\tilde{T}^a_tu(x)-\bar{u}(x)\leq C_1t^{-(1+\frac{4}{2\rho+n})},
\quad \forall x\in\mathbb{T}^n,
\]
where $C_1$ is a constant which depends only on $n$, $\rho$ and
$\alpha$. From (5) of Proposition \ref{pr4-2}, $(s,z)\mapsto
\tilde{T}^a_s u(z)$ is continuous on
$[0,\infty)\times\mathbb{T}^n$. Hence there exists a constant
$\tilde{K}>0$ such that

\[
\tilde{T}^a_tu(x)-\bar{u}(x)\leq
\tilde{K}t^{-(1+\frac{4}{2\rho+n})}, \quad \forall t>0,\ \forall
x\in\mathbb{T}^n,
\]
where $\tilde{K}$ depends only on $n$, $\rho$, $\alpha$ and $u$,
i.e., (I3) holds. \hfill $\Box$

\subsubsection{An example}
\begin{example}\label{ex1}
Consider the following integrable $C^2$ Lagrangian

\[
\bar{L}^1_a(x,v)=\frac{1}{2}\langle v-\omega,v-\omega\rangle,
\quad x\in\mathbb{T}^n,\ v\in\mathbb{R}^n,\
\omega\in\mathbb{S}^{n-1}.
\]
\end{example}
It is easy to see that $\bar{L}^1_a$ is a special case of $L^1_a$.
For $\bar{L}^1_a$, we show that there exist $u\in
C(\mathbb{T}^n,\mathbb{R}^1)$, $x^0\in\mathbb{T}^n$ and
$t_m\to+\infty$ as $m\to+\infty$ such that

\[
|T^a_{t_m}u(x^0)-\bar{u}(x^0)|=O(\frac{1}{t_m}), \quad
m\to+\infty,
\]
which implies that the result of Theorem \ref{th2} is sharp in the
sense of order.

Recall the universal covering projection
$\pi:\mathbb{R}^n\to\mathbb{T}^n$. Let $x^0\in\mathbb{T}^n$ such
that each point $\tilde{x}^0\in\mathbb{R}^n$ in the fiber over
$x^0$ ($\pi\tilde{x}^0=x^0$) is the center of each fundamental
domain in $\mathbb{R}^n$. Define a continuous function on
$\mathbb{R}^n$ as follows: for $\tilde{x}\in\mathbb{R}^n$

\[
\tilde{u}(\tilde{x})=\left\{
                         \begin{array}{ll}
                         \delta-\|\tilde{x}-\tilde{x}^0\|, &
                         \|\tilde{x}-\tilde{x}^0\|\leq\delta,\\
                         0, & \mathrm{otherwise},
                         \end{array}
                         \right.
\]
where $0<\delta<\frac{1}{2}$. We then define a continuous function
on $\mathbb{T}^n$ as $u(x)=\tilde{u}(\tilde{x})$ for all
$x\in\mathbb{T}^n$, where $\tilde{x}$ is an arbitrary point in the
fiber over $x$. Thus, from Lemma \ref{le4-1},
$\bar{u}\equiv\min_{x\in\mathbb{T}^n}u(x)=0$.

Now fix a point $\tilde{x}^0_0$ in the fiber over $x^0$. Then
there exist $\{\tilde{x}^0_m\}_{m=1}^{+\infty}$ in the fiber over
$x^0$ and $t_m\to+\infty$ as $m\to+\infty$ such that
$\|(\tilde{x}^0_m-\omega
t_m)-\tilde{x}^0_0\|\leq\frac{\delta}{2}$. Let
$\tilde{z}_m=\tilde{x}^0_m-\omega t_m$. Then
$\|\tilde{z}_m-\tilde{x}^0_0\|\leq\frac{\delta}{2}$. For each
$t_m$ there exists $y_m\in\mathbb{T}^n$ such that

\[
T^a_{t_m}u(x^0)=u(y_m)+\int_0^{t_m}\bar{L}^1_a(\gamma_{y_m},\dot{\gamma}_{y_m})ds,
\]
where $\gamma_{y_m}:[0,t_m]\to\mathbb{T}^n$ is a Tonelli minimizer
with $\gamma_{y_m}(0)=y_m$, $\gamma_{y_m}(t_m)=x^0$. In view of
the lifting property of the covering projection, there is a unique
curve $\tilde{\gamma}_{y_m}:[0,t_m]\to\mathbb{R}^n$ with
$\pi\tilde{\gamma}_{y_m}=\gamma_{y_m}$ and
$\tilde{\gamma}_{y_m}(t_m)=\tilde{x}^0_m$. Set
$\tilde{y}_m=\tilde{\gamma}_{y_m}(0)$. Then $\pi\tilde{y}_m=y_m$.
Moreover, $\tilde{\gamma}_{y_m}$ has the following form

\[
\tilde{\gamma}_{y_m}(s)=\omega's+\tilde{y}_m, \quad s\in[0,t_m],
\]
where $\omega'\in\mathbb{S}^{n-1}$. It is clear that
$\tilde{\gamma}_{y_m}(0)=\tilde{y}_m$ and
$\tilde{y}_m=\tilde{x}^0_m-\omega't_m$.

If $\|\tilde{y}_m-\tilde{z}_m\|\leq\frac{\delta}{4}$, then from
$\|\tilde{z}_m-\tilde{x}^0_0\|\leq\frac{\delta}{2}$ we have
$\|\tilde{y}_m-\tilde{x}^0_0\|\leq\frac{3\delta}{4}$. Hence,

\begin{align}\label{4-7}
\begin{split}
T^a_{t_m}u(x^0)&=u(y_m)+\int_0^{t_m}\bar{L}^1_a(\gamma_{y_m},\dot{\gamma}_{y_m})ds\\
               &\geq\tilde{u}(\tilde{y}_m)\geq\delta-\frac{3\delta}{4}=\frac{\delta}{4}.
\end{split}
\end{align}
From (\ref{4-7}), we may deduce that there can only be a  finite
number of  $\tilde{y}_m$'s such that
$\|\tilde{y}_m-\tilde{z}_m\|\leq\frac{\delta}{4}$. For, otherwise,
there would be $\{t_{m_i}\}_{i=1}^{+\infty}$ and
$\{\tilde{y}_{m_i}\}_{i=1}^{+\infty}$ such that

\[
T^a_{t_{m_i}}u(x^0)\geq\frac{\delta}{4}, \quad i=1,2,\cdots,
\]
which contradicts
$\lim_{i\to+\infty}T^a_{t_{m_i}}u(x^0)=\bar{u}(x^0)=0$.

For $\tilde{y}_m$ with
$\|\tilde{y}_m-\tilde{z}_m\|>\frac{\delta}{4}$, we have

\[
\frac{\delta}{4}<\|\tilde{y}_m-\tilde{z}_m\|=\|\tilde{x}_m^0-\omega't_m-(\tilde{x}_m^0-\omega
t_m)\|=\|\omega-\omega'\|t_m.
\]
Thus,

\begin{align}\label{4-8}
\begin{split}
T^a_{t_m}u(x^0)&=u(y_m)+\int_0^{t_m}\bar{L}^1_a(\gamma_{y_m},\dot{\gamma}_{y_m})ds\\
               &\geq\frac{1}{2}t_m\|\omega-\omega'\|^2=\frac{1}{2}\frac{t^2_m\|\omega-\omega'\|^2}{t_m}\geq\frac{\delta^2}{32t_m}.
\end{split}
\end{align}
Therefore, from (\ref{4-8}) and Theorem \ref{th2} we have

\[
|T^a_{t_m}u(x^0)-\bar{u}(x^0)|=|T^a_{t_m}u(x^0)|=O(\frac{1}{t_m}),
\quad m\to+\infty.
\]

%\section*{Acknowledgements}

\end{document}